\newtheorem{theorem}{Theorem}[section]%
\newtheorem{lemma}[theorem]{Lemma}%
\newtheorem{proposition}[theorem]{Proposition}%
\newtheorem{remark}[theorem]{Remark}%
\newenvironment{ma}{\begin{array}{>{\displaystyle}r >{\displaystyle}c >{\displaystyle}l}}{\end{array}}%
\newcommand{\aleq}{\prec}
\newcommand{\ageq}{\succ}
\newcommand{\aeq}{\approx}
\newcommand{\subsubset}{\subset\subset}
\newcommand{\supsupset}{\supset\supset}
\newcommand{\N}{{\mathbb N}}
\newcommand{\R}{{\mathbb R}}
\renewcommand{\S}{{\mathbb S}}
\newcommand{\Rz}{{\mathcal{R}}} 
\newcommand{\fracm}[1]{{\frac{1}{#1}}}
\newcommand{\supp}{\operatorname{supp}}%
\newcommand{\dist}{\operatorname{dist}}%
\newcommand{\lap}{\Delta}
\newcommand{\laps}[1]{\lap^{\frac{#1}{2}}}
\newcommand{\lapms}[1]{\lap^{-\frac{#1}{2}}}
\newcommand{\lapa}{\lap^{\frac{\alpha}{2}}}
\newcommand{\lapma}{\lap^{-\frac{\alpha}{2}}}
\newcommand{\abs}[1]{{\left \vert #1 \right \vert}}%
\newcommand{\sabs}[1]{{\vert #1 \vert}}%
\newcommand{\brac}[1]{{\left ( #1 \right )}}%
\newcommand{\Vrac}[1]{{\left \Vert #1 \right \Vert }}%
\newcommand{\vrac}[1]{{\Vert #1 \Vert }}%
\newcommand{\ontop}[2]{{\genfrac{}{}{0pt}{}{#1}{#2}}}
\newcommand{\intl}{\int \limits}
\def\XXint#1#2#3{{\setbox0=\hbox{$#1{#2#3}{\int}$}%
     \vcenter{\hbox{$#2#3$}}\kern-.5\wd0}}%
\newcommand{\sref}[2]{#1.\ref{#2}}
\numberwithin{equation}{section}%
\title{n/p-harmonic maps:\\regularity for the sphere case}
\author{Francesca Da Lio\footnote{Department of Mathematics, ETH Z\"urich, R\"amistr. 101, 8092 Z\"urich, Switzerland}$\ $\footnote{Dipartimento di Matematica Pura ed Applicata, Universit\`a degli Studi di Padova. Via Trieste 63, 35121,Padova, Italy, e-mail: dalio@mah.unipd.it}, Armin Schikorra\footnote{MPI MIS Leipzig, Inselstr. 22, 04315 Leipzig, Germany, e-mail: schikorr@mis.mpg.de; supported by the VARIOGEO-project by Prof. Jost, ERC Advanced Investigator Grant ERC-2010-AdG-20100224, Grant Agreement Number 267087. Partially supported by DAAD PostDoc Program (D/10/50763) and FIM at ETH Z\"urich}}
\date{\today}
\begin{document}
\maketitle

\thispagestyle{empty}
\begin{abstract}
\noindent
We introduce $n$/$p$-harmonic maps as critical points of the energy
\[
 \mathcal{E}_{n,p}(v) = \intl_{\R^n} \abs{\laps{\alpha} v}^{p}
\]
where pointwise $v: D \subset \R^n \to \S^{N-1}$, for the $N$-sphere $\S^{N-1} \subset \R^N$ and $\alpha = \frac{n}{p}$. This energy combines the non-local behaviour of the fractional harmonic maps introduced by Rivi\`{e}re and the first author with the degenerate arguments of the $n$-laplacian. In this setting, we will prove H\"older continuity.
\end{abstract}

\tableofcontents

%
%

\newcommand{\pbar}{{\overline{p}}}
\newcommand{\abar}{{\overline{\alpha}}}
\newcommand{\etab}{{\overline{\eta}}}

\section{Introduction}
Our work is motivated by recent results \cite{DR1dSphere}, \cite{DR1dMan}, \cite{SNHarmS10}, \cite{DndMan}, \cite{Sfracenergy} which proved regularity for critical points of the energy $\mathcal{F}_n$ acting on maps $v: \R^n \to \R^N$,
\[
 \mathcal{F}_{n}(v) = \intl_{\R^n} \abs{\lap^{\frac{n}{4}} v}^2 \quad \mbox{$v \in \mathcal{N} \subset \R^N$ a.e.}
\]
Here, the operator $\laps{\alpha} v$ is defined as a multiplier operator with symbol $-\abs{\xi}^\alpha$, that is, denoting the Fourier transform and its inverse by $()^\wedge$ and $()^\vee$, respectively,
\[
 \lapa v = \brac{-\abs{\xi}^\alpha v^\wedge}^\vee.
\]
These energies were introduced by T. Rivi\`{e}re and the first author -- and they can be seen as an $n$-dimensional alternative to the two-dimensional Dirichlet energy
\[
\mathcal{D}_{2}(v) = \intl_{\R^2} \abs{\nabla v}^2 \quad \mbox{$v \in \mathcal{N} \subset \R^N$ a.e.}
\]
Both energies have critical Euler-Lagrange equations. That is, the highest order terms scale exactly as the lower-order terms, thus inhibiting the application of a general regularity theory based only on the general growth of the right-hand side -- one has to consider the finer behavior of the equation: These exhibit an antisymmetric structure, which is closely related to the appearance of Hardy spaces and compensated compactness -- and induces regularity of critical points. In two dimensions, these facts were observed in Rivi\`{e}re's celebrated \cite{Riv06} for all conformally invariant variational functionals (of which the Dirichlet energy is a prototype). We refer the interested reader to the introductions of \cite{DR1dSphere}, \cite{DR1dMan} for more on this.\\
Another possibility of generalizing the Dirichlet energy to arbitrary dimensions (whilst preserving the criticality of the Euler-Lagrange equations) is to consider
\[
 \mathcal{D}_{n}(v) = \intl_{\R^n} \abs{\nabla v}^n \quad \mbox{$v \in \mathcal{N} \subset \R^N$ a.e.}.
\]
Again in this case, the now degenerate Euler-Lagrange equations are critical and exhibit an antisymmetric structure, cf. \cite[Chapter III]{Riv08} -- only that it is not known so far, whether in general this structure implies even continuity. In fact, towards regularity of these systems, only few results are known. In \cite{Strz94} P. Strzelecki proved regularity, if the target manifold is a round sphere $\S^{N-1}$ -- which extended the respective Dirichlet-energy result by F. H\'elein \cite{Hel90}. In the setting of general manifolds, we know so far of convergence results, cf. \cite{WangCompThm05}, and only under additional assumptions on the solution there are regularity results, cf. \cite{DM10}, \cite{Kol10}, \cite{SnHsystemOrlicz}.\\
It then seems interesting to consider an energy which combines the difficulties of $\mathcal{D}_n$ and $\mathcal{F}_n$. Namely we will work with
\begin{equation}\label{eq:Enpdef}
 \mathcal{E}_{n,\pbar}(v) = \intl_{\R^n} \abs{\laps{\abar} v}^{\pbar} \quad \mbox{$v\big \vert_D \in \mathcal{N} \subset \R^N$ a.e., where $\abar = \frac{n}{\pbar}$, $D \subsubset \R^n$}
\end{equation}
Note, in the Euler-Lagrange equations of $\mathcal{E}_{n,\pbar}$, the leading order differential operator is nonlocal \emph{and} degenerate. Again, these settings are critical for regularity: One checks that any mapping $v$ with finite energy $\mathcal{E}_{n,\pbar}(v) < \infty$ belongs to BMO, but does not necessarily need to be continuous, as Frehse's counterexample \cite{Frehse73} shows.\\
Here we consider the situation of a sphere, i.e. $\mathcal{N} = \S^{N-1}$. Our main result is:
\begin{theorem}\label{th:main}
Let $u$ be a critical point of $\mathcal{E}_{n,\pbar}$ as in \eqref{eq:Enpdef}. Then $u$ is H\"older-continuous.
\end{theorem}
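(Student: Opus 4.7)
The plan is to mimic, in this degenerate and nonlocal setting, the Hélein/Strzelecki strategy for the $n$-harmonic sphere case as adapted to the fractional setting by Da Lio--Rivière. The three building blocks will be: (i) an antisymmetric reformulation of the Euler--Lagrange equation coming from the sphere constraint; (ii) suitable three-term commutator estimates in a Lorentz-type scale adapted to the exponent $\bar p$; (iii) a Morrey/Dirichlet-energy decay iteration leading, via a Campanato-type characterization, to H\"older regularity.

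First, I would derive the Euler--Lagrange equation. Writing $\bar\alpha = n/\bar p$ and testing $\mathcal{E}_{n,\bar p}$ with variations of the form $v_t = \pi_{\mathbb{S}^{N-1}}(v+t\varphi)$ for $\varphi$ compactly supported in $D$, one obtains that, in the distributional sense,
\[
\laps{\abar}\brac{\abs{\laps{\abar} v}^{\pbar-2}\laps{\abar} v^i} \;=\; \lambda\, v^i
\qquad \text{on } D,
\]
for some scalar Lagrange multiplier $\lambda$. Exactly as for the Dirichlet and fractional Dirichlet energies on the sphere, the key observation is that the antisymmetric combinations
\[
\Omega^{ij}(\cdot) \;:=\; v^i\,\laps{\abar}\brac{\abs{\laps{\abar} v}^{\pbar-2}\laps{\abar} v^j}
\;-\; v^j\,\laps{\abar}\brac{\abs{\laps{\abar} v}^{\pbar-2}\laps{\abar} v^i}
\]
have better structure than the system itself: the diagonal Lagrange-multiplier contribution cancels, leaving a purely antisymmetric term $\Omega = -\Omega^{\mathrm{T}}$ acting on $v$. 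This is the nonlocal/degenerate analogue of the identity $v^i \Delta_n v^j - v^j \Delta_n v^i = \mathrm{div}(v^i|\nabla v|^{n-2}\nabla v^j - v^j |\nabla v|^{n-2}\nabla v^i)$ used by Strzelecki.

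Second, I would rewrite this equation in the form
\[
\abs{\laps{\abar} v}^{\pbar-2}\laps{\abar} v^i
\;=\; T\bigl(v,\laps{\abar} v\bigr)^i \;+\; \text{good terms},
\]
where $T$ is built from three-commutator operators of the type $[\laps{\abar}, v]\bigl(\abs{\laps{\abar} v}^{\pbar-2}\laps{\abar} v\bigr)$, analogous to those appearing in \cite{DR1dSphere,DR1dMan,SNHarmS10,DndMan}. The main analytical step will be a commutator estimate asserting that these three-term operators gain integrability/decay compared to a naive product estimate, with gains controlled by the BMO-type smallness of $\laps{\abar}v$ on small balls. Since $v$ has finite $\mathcal{E}_{n,\bar p}$-energy, $\laps{\abar} v\in L^{\bar p}$ and one checks $v\in \mathrm{BMO}$, providing exactly the right borderline integrability for such commutators. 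The hardest step will be to formulate these commutator estimates correctly in the nonlocal \emph{and} degenerate regime: the natural duality $L^{\bar p}\leftrightarrow L^{\bar p'}$ for the $\bar p$-Laplace term must be reconciled with the $\laps{\abar}$-operator, forcing us to work in the Lorentz scale $L^{(\bar p,q)}$ and use the fractional Leibniz/Kato--Ponce type inequalities together with Hardy-space duality.

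Third, once the commutator estimate is in place, I would set up a Morrey-type decay. Let
\[
E(x_0,r) \;:=\; \intl_{B_r(x_0)} \abs{\laps{\abar} v}^{\bar p}.
\]
Using the antisymmetric structure together with a cut-off/localization argument, one proves that there exist $\theta\in(0,1)$ and $\tau\in(0,1)$, and an $\eps>0$, such that
\[
E(x_0,\tau r) \;\le\; \theta\, E(x_0, r)
\qquad \text{whenever } E(x_0,r) \le \eps.
\]
Because $\mathcal{E}_{n,\bar p}$ is absolutely continuous, the smallness hypothesis $E(x_0,r_0)\le\eps$ holds for all $x_0\in D$ and some $r_0$ depending on $x_0$; iterating then yields a decay $E(x_0,r)\le C r^{\gamma\bar p}$ for some $\gamma>0$ and hence, by the Campanato/Morrey embedding adapted to the fractional Sobolev space $W^{\bar\alpha,\bar p}$ (recall $\bar\alpha\bar p = n$), H\"older continuity of $v$ in the interior of $D$. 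The whole scheme is critical, so the main obstacle is precisely the joint degeneracy-plus-nonlocality: every estimate has to be scale-invariant, and the BMO-smallness of $v$ on small scales must be turned into a genuine gain of integrability via the three-commutator mechanism; all subsequent arguments are routine once this ingredient is established.
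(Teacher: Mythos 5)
Your overall strategy coincides with the paper's: derive the antisymmetric form of the Euler--Lagrange equation from the sphere constraint, exploit three-term commutators of the type $H_{\abar}(a,b)=\laps{\abar}(ab)-a\laps{\abar}b-b\laps{\abar}a$ (which the paper estimates pointwise via kernel decompositions rather than paraproducts, cf. Theorem \ref{th:lowerorderalphaln}), and close with a localized decay estimate and an iteration/Dirichlet-growth argument. Two points, however, deserve attention, one of which is a genuine missing idea.

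The gap: you never explain how to reassemble control of $\vrac{\laps{\abar}v}_{\pbar,B_r}$ from the two pieces you actually estimate. The normal part $v\cdot\laps{\abar}v$ comes out of the constraint $\abs{v}=1$ with an \emph{unweighted} $L^{\pbar}$ bound, whereas the antisymmetric part of the equation only yields, by duality against test functions with $\vrac{\laps{\abar}\varphi}_{\pbar}\leq 1$, a bound on the \emph{weighted} quantities $\sabs{\laps{\abar}v}^{\pbar-2}\,\omega_{ij}v^j\laps{\abar}v^i$ in $L^{\pbar'}$. Passing from these to $\vrac{\laps{\abar}v}^{\pbar-1}_{\pbar,B_r}$ is exactly where the degeneracy bites: for $\pbar\geq 2$ one can absorb the weight directly, but for $\pbar\in(1,2)$ one needs an additional pointwise decomposition of $\R^N$ along the finite family of antisymmetric directions $\omega$ together with a splitting of the domain into the sets where $\sabs{w^i\omega_{ij}\laps{\abar}w^j}\geq\theta\sabs{\laps{\abar}w}$ and their complements (the paper's Proposition \ref{pr:orthogdecomp} and the proof of Lemma \ref{la:wallest}). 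Calling everything after the commutator estimate ``routine'' skips precisely the step that distinguishes this degenerate problem from the $\pbar=2$ case of Da Lio--Rivi\`ere.

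A secondary issue: the clean decay $E(x_0,\tau r)\leq\theta\,E(x_0,r)$ cannot hold as stated in the nonlocal setting. The localization of the commutator and disjoint-support estimates inevitably produces tail contributions from dyadic annuli, so the correct inequality has the form
\[
\vrac{\laps{\abar}w}^{\pbar-1}_{\pbar,B_{\Lambda^{-1}r}}\;\leq\;\varepsilon\,\vrac{\laps{\abar}w}^{\pbar-1}_{\pbar,B_{\Lambda r}}+C\,r^{\gamma}+\varepsilon\sum_{k=1}^{\infty}2^{-\gamma k}\,\vrac{\laps{\abar}w}^{\pbar-1}_{\pbar,B_{2^{k}\Lambda r}\backslash B_{2^{k-1}\Lambda r}},
\]
and the iteration lemma must be the variant that tolerates such geometrically weighted tails. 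This is fixable within your scheme, but it should be stated, since a naive Campanato iteration on $E(x_0,r)$ alone does not go through.
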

Naturally, one expects this result to hold at least partially for more general manifolds $\mathcal{N}$. To this end, in \cite{DSpalphMAN} we will treat this case of general manifolds, but with the condition $\pbar \leq 2$.\\
The proof relies on a suitable adaption of the arguments in \cite{DR1dSphere}, \cite{DR1dMan}, \cite{SNHarmS10}, \cite{DndMan}, \cite{Sfracenergy}, the details of which we will explain in the next section: The Euler-Lagrange equations of a critical point, see \cite{DR1dSphere},
\cite{SNHarmS10}, imply that
\begin{equation}\label{eq:upde}
\intl_{\R^n} \sabs{\laps{\abar} u}^{\pbar -2}\ \laps{\abar} u^i\ \laps{\abar}
(\omega_{ij} u^j \varphi) = 0 \quad \mbox{for all $\varphi \in C_0^\infty(D)$,
$\omega_{ij} = - \omega_{ji} \in \R$}.
\end{equation}
Note, that the main difference and difficulty comparing this equation to the $n/2$-harmonic case in \cite{DR1dSphere}, \cite{SNHarmS10}, is the weight $\sabs{\laps{\abar} u}^{\pbar -2}$! Moreover, we have the sphere-condition,
\begin{equation}\label{eq:uinsphere}
\abs{u(x)} = 1  \quad \mbox{for a.e. $x \in D$}.
\end{equation}
For a sketch of the proof, let us assume that $D = \R^n$. Note, that \eqref{eq:uinsphere} reveals information about the growth of derivatives of $u$ in the direction of $u$:
\[
 u \cdot \nabla u \equiv 0
\]
Moreover -- and more suitable to our case -- 
\begin{equation}\label{eq:threecomm}
 - 2 u \cdot \laps{\abar} u = \brac{\laps{\abar} \abs{u}^2 - u \cdot \laps{\abar} u - u \cdot \laps{\abar} u}\ - \underbrace{\laps{\abar} \overbrace{\abs{u}^2}^{\equiv 1}}_{\equiv 0}.
\end{equation}
We set 
\[
 H_{\alpha} (u,v) := \laps{\abar} (uv) - u \laps{\abar} v - v \laps{\abar} u,
\]
We will see that    $3$-term commutators $H_{\alpha} (u,v)$, appearing on the right-hand side of (\ref{eq:threecomm}),  are  more regular than each of their three generating terms. Compensation phenomena for 3-term commutators were first  observed by the first author and  Rivi\`ere   in \cite{DR1dSphere} and \cite {DR1dMan}  in the context of half-harmonic maps by using the so-called para-products.
Such compensation phenomena can be formulated in  different ways, for instance as an expansion of lower order derivatives.
This can best be seen by taking $\alpha = 2$: $H_2(u,v) = 2\nabla u \cdot \nabla v$ -- they behave like products of lower-order operators applied to $u$ and $v$. This interpretation has been developed by the second author in  \cite{Sfracenergy}, \cite{SNHarmS10} and it  is the approach that we will use in this paper.
We finally mention that these lower order expansions   are also closely related to the T1-Theorem and the ``Leibniz rule'' for fractional order derivatives obtained by Kato and Ponce, see \cite{KP88} and \cite[Corollary 1.2]{Hof98}. 
The necessary estimates for  the  operators $H_{\alpha} (u,v)$, can be paraphrased by

\begin{theorem}[cf. \cite{Sfracenergy}]\label{th:lowerorderalphaln}
For any $\alpha \in (0,n)$, 
Let $u =\lapms{\alpha} \laps{\alpha} u$, $v =\lapms{\alpha} \laps{\alpha} v$.
Then for $\alpha \in (0,n)$ there exists some constant $C_\alpha > 0$ and a
number $L \equiv L_\alpha \in \N$, and for $k \in \{1,\ldots,L\}$ constants $s_k
\in (0,\alpha)$, $t_k \in [0,s_k]$ such that for any $i = 1, \ldots,n$, where $\Rz_i$ denotes the Riesz transform,
\[
 \abs{\Rz_i H_\alpha(u,v)(x)} \leq C\ \sum_{k=1}^L M_k \lapms{s_k-t_k}
\brac{\lapms{t_k} \abs{\laps{\alpha} u}\ N_k
\lap^{-\frac{\alpha}{2}+\frac{s_k}{2}} \abs{\laps{\alpha} v}}.
\]
Here, $M_k, N_k$ are possibly Riesz transforms, or the identity. Moreover,
$\abs{s_k-t_k}$ can be supposed to be arbitrarily small. In particular,
for any $\alpha \in (0,n)$, $q, q_1, q_2 \in [1,\infty]$ such that
\[
 \fracm{q} = \fracm{q_1} + \fracm{q_2}.
\]
Then
\begin{equation}\label{eq:Halphauvlpqest}
 \vrac{H_\alpha (u,v)}_{(\frac{n}{\alpha},q),\R^n} \aleq \vrac{\lapa
u}_{(\frac{n}{\alpha},q_2),\R^n}\ \vrac{\lapa v}_{(\frac{n}{\alpha},q_2),\R^n}.
\end{equation}
Here, $\vrac{\cdot}_{(p,q)}$ denotes the Lorentz-space $L^{p,q}(\R^n)$-norm.
\end{theorem}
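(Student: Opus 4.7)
The plan is to decompose $H_\alpha(u,v)$ via Littlewood--Paley / paraproduct machinery and exploit the algebraic cancellation of the highest-order term. Write $u = \sum_j P_j u$ and $v = \sum_j P_k v$, where $P_j$ projects onto the dyadic annulus $\abs{\xi}\sim 2^j$, so that
\[
H_\alpha(u,v) \;=\; \sum_{j,k} H_\alpha(P_j u, P_k v),
\]
and split into the three standard regimes: low-high ($j\ll k$), high-low ($j\gg k$), and high-high ($j\sim k$).

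First I would handle the off-diagonal regimes, say $j\gg k$. There $P_j u\cdot P_k v$ has Fourier support at frequencies $\sim 2^j$, so $\laps{\alpha}(P_j u\cdot P_k v)\approx 2^{\alpha j} P_j u\cdot P_k v$, which cancels $P_k v\cdot \laps{\alpha}P_j u$ to leading order, while $P_j u\cdot \laps{\alpha}P_k v$ is of strictly lower order ($\sim 2^{\alpha k}$). The residual cancellation between the first two terms is quantified by Taylor-expanding the multiplier $\abs{\xi+\eta}^\alpha$ around $\abs{\xi}^\alpha$ for $\abs{\eta}\ll\abs{\xi}$: the resulting expansion in powers of $\abs{\eta}/\abs{\xi}$ translates, after summing the dyadic pieces, into a bilinear operator whose pointwise kernel has precisely the shape $\lapms{s_k-t_k}\bigl(\lapms{t_k}\abs{\laps{\alpha}u}\cdot \lap^{-\alpha/2+s_k/2}\abs{\laps{\alpha}v}\bigr)$ with $s_k\in(0,\alpha)$ and $t_k\in[0,s_k]$, the gap $\abs{s_k-t_k}$ being tunable by truncating the Taylor expansion at higher order. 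The high-high regime $j\sim k$ is treated by a separate decomposition: here the three terms all have comparable size, so instead of a Taylor cancellation one uses that the product $P_j u\cdot P_j v$ has Fourier support possibly down to frequency $0$, where the multiplier $\abs{\xi}^\alpha$ itself is small, and symmetrically splits the $\alpha$-derivative between the two factors.

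The Lorentz-space bound \eqref{eq:Halphauvlpqest} then follows from the pointwise estimate by standard function-space tools. Riesz transforms are bounded on $L^{p,q}(\R^n)$, the Hardy--Littlewood--Sobolev inequality in Lorentz spaces gives $\lapms{s}\colon L^{p,q}\to L^{p^\ast,q}$ with $1/p^\ast=1/p-s/n$, and H\"older's inequality in Lorentz spaces combines $L^{p_1,q_1}\cdot L^{p_2,q_2}\hookrightarrow L^{p,q}$ whenever $1/p=1/p_1+1/p_2$ and $1/q=1/q_1+1/q_2$. Applying these three facts to each of the $L$ summands, with exponents chosen so that the $\lapms{t_k}$ and $\lap^{-\alpha/2+s_k/2}$ smoothings recover precisely $L^{n/\alpha,q_i}$ integrability of $\abs{\laps{\alpha}u}$ and $\abs{\laps{\alpha}v}$, yields the claimed estimate.

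The main obstacle is the paraproduct bookkeeping in the off-diagonal regime: one must carefully track the dyadic Taylor remainders so that they genuinely assemble into a \emph{finite} sum of operators of the stated form, with small gap $\abs{s_k-t_k}$ and with the $M_k,N_k$ accounted for as Riesz transforms or identities. This is the core content of the commutator analysis in \cite{Sfracenergy}, which the proof would follow closely.
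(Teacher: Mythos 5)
Your route is genuinely different from the paper's: you work in phase space with a Littlewood--Paley trichotomy and Taylor expansion of the symbol $\abs{\xi+\eta}^\alpha$, whereas the paper works entirely in the geometric space, writing $H_\alpha(u,v)$ for $\alpha\in(0,1)$ as a triple integral against differences of Riesz-potential kernels and estimating those kernel differences pointwise by the mean value theorem and a case decomposition (Proposition~\ref{pr:multest}, Proposition~\ref{pr:lowerorderalphal1}), then reducing $\alpha\geq 1$ to $\alpha\in(0,1)$ by peeling off Riesz transforms and integer powers of $\lap$ and treating the resulting Coifman--Rochberg--Weiss-type commutators separately. This difference is not merely stylistic, and it is where your argument has a genuine gap: the theorem asserts a \emph{pointwise} inequality, with $\abs{\laps{\alpha}u}$ and $\abs{\laps{\alpha}v}$ sitting inside positive Riesz-potential kernels. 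Your key step --- that the dyadic Taylor remainders, after summation, ``assemble into a bilinear operator whose pointwise kernel has precisely the shape $\lapms{s_k-t_k}(\lapms{t_k}\abs{\laps{\alpha}u}\cdot\lap^{-\alpha/2+s_k/2}\abs{\laps{\alpha}v})$'' --- is asserted rather than proved, and it is exactly the step that fails for phase-space methods in general: multiplier bounds obtained after Fourier transform control norms, not pointwise values, and summing infinitely many dyadic pieces does not by itself produce a \emph{finite} family of compositions of \emph{positive} kernels applied to the absolute values of the data. (The paper itself makes this point in the remark following Proposition~\ref{pr:lowerorderalphal1}: the paraproduct and phase-space mean-value arguments of the earlier works yield $L^p$-type conclusions once transported back to physical space, and the whole reason for the kernel-difference argument of \cite{Sfracenergy} adopted here is to get the pointwise domination.) The pointwise form is not a cosmetic strengthening: it is what feeds the localization machinery later (Proposition~\ref{pr:HwvpBlrest}, Proposition~\ref{pr:locestloop1}, Lemma~\ref{la:Hvwlocest}), where one multiplies by cutoffs and exploits disjoint supports inside the Riesz potentials.

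By contrast, the second half of your argument --- deducing the Lorentz-space bound \eqref{eq:Halphauvlpqest} from the pointwise estimate via boundedness of Riesz transforms on $L^{p,q}$, Hardy--Littlewood--Sobolev in Lorentz spaces, and H\"older in Lorentz spaces --- is correct and is exactly what the paper does (Proposition~\ref{pr:Hest}). If you only wanted \eqref{eq:Halphauvlpqest}, your paraproduct scheme would be a viable alternative (it is essentially a Kato--Ponce/Coifman--Meyer commutator estimate); but to prove the theorem as stated you must either carry out the kernel-level domination in physical space, or show concretely how the truncated Taylor expansion plus remainder in your off-diagonal and diagonal regimes produces the finite list of exponents $(s_k,t_k)$ and the positive-kernel structure, which your sketch defers.
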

Consequently, \eqref{eq:uinsphere} controls $u\cdot \laps{\abar}u$ roughly like
\[
 \vrac{u\cdot \laps{\abar}u}_{\pbar,\R^n} \aleq \vrac{\laps{\abar}u}_{\pbar,\R^n}^2.
\]
This argument can be localized, and then implies an estimate for the growth $u\cdot \laps{\abar}u$ in the $L^{\pbar}$-norm on small balls by the square $\vrac{\laps{\abar}u}_{\pbar}^2$ localized essentially to slightly bigger balls.\\
Now the fact that $\abs{u} \equiv 1$, implies also that in order to control the growth of $\laps{\abar}u$, it suffices to estimate the growth of $u\cdot \laps{\abar}u$ and the growth of $\omega_{ij} u^i \laps{\abar} u^j$ for finitely many $\omega_{ij} = - \omega_{ji} \in \R$, see Proposition \ref{pr:orthogdecomp}. But terms of the form $\omega_{ij} u^i \laps{\abar} u^j$ can be estimated by the Euler-Lagrange equation \eqref{eq:upde}.\\
By this kind of argument, we obtain (essentially) the following growth estimates for all balls $B_r$
\[
 \vrac{\laps{\abar}u}_{\pbar,B_r} \leq \vrac{\laps{\abar}u}^2_{\pbar,B_{\Lambda r}} + \Lambda^{-\gamma}\ \vrac{\laps{\abar}u}_{\pbar,\R^n}\ \sum_{k=1}^\infty 2^{-\gamma k} \vrac{\laps{\abar}u}_{\pbar,B_{2^k \Lambda r}\backslash B_{2^k \Lambda r}},
\]
for some $\gamma > 0$, and any $\Lambda > 2$. Using an iteration technique, this implies that 
\[
 \vrac{\laps{\abar}u}_{\pbar,B_r} \leq C r^\alpha,
\]
which accounts for the H\"older-continuity of $u$.\\
\\
Let us briefly underline the differences to the manifold case treated in \cite{DSpalphMAN}. There the simple condition \eqref{eq:uinsphere} does not hold anymore and  we follow the approach introduced in \cite {DR1dMan}  which consists in considering  separately  the tangential and normal projections of $\laps{\abar} u$ (that is, we work with projections related to the \emph{derivatives} of $u$).  For the moment, this prevents us to treat the case of    {\em extremely small } $\abar$ (which in the sphere case poses no problems). On the other hand, the respective Euler-Lagrange equations actually exhibit a non-trivial right-hand side with antisymmetric structure. This will force us, to estimate the growth of $\laps{\abar} u$ in the weak space $L^{\pbar,\infty}$, which in turn will make it necessary to gain $L^{\pbar,1}$-estimates from the three-term commutators $H_{\abar}(\cdot,\cdot)$. This again, cf. \eqref{eq:Halphauvlpqest} for $q_1 = q_2 = 2$, will only be possible if $\pbar \leq 2$.

${}$\\[2em]
We will use \emph{notation} similar to \cite{SNHarmS10}:\\
We say that $A \subsubset \R^n$ if $A$ is a bounded subset of $\R^n$. For a set $A \subset \R^n$ we will denote its $n$-dimensional Lebesgue measure by $\abs{A}$. By $B_r(x) \subset \R^n$ we denote the open ball with radius $r$ and center $x \in \R^n$. If no confusion arises, we will abbreviate $B_r \equiv B_r(x)$. If $p \in [1,\infty]$ we usually will denote by $p'$ the H\"older conjugate, that is $\frac{1}{p} + \frac{1}{p'} = 1$. By $f \ast g$ we denote the convolution of two functions $f$ and $g$. Lastly, our constants -- frequently denoted by $C$ or $c$ --  can possibly change from line to line and usually depend on the space dimensions involved, further dependencies will be denoted by a subscript, though we will make no effort to pin down the exact value of those constants. If we consider the constant factors to be irrelevant with respect to the mathematical argument, for the sake of simplicity we will omit them in the calculations, writing $\aleq{}$, $\ageq{}$, $\aeq{}$ instead of $\leq$, $\geq$ and $=$.\\
We will use the same cutoff-functions as in, e.g., \cite{DR1dSphere}, \cite{SNHarmS10}: $\eta^k_{r} \in C_0^\infty(A_{r,k})$ where
\[
 B_{r,k}(x) := B_{2^kr}(x)
\]
for $k \geq 1$,
\[
 A_{r,k}(x) := B_{r,k+1}(x) \backslash B_{r,k-1}(x),
\]
and for $k = 0$
\[
 A_{r,0}(x) := B_{r,0}(x).
\]
Moreover, $\sum_k \eta^k_r \equiv 1$ pointwise everywhere, and we assume that $\abs{\nabla^l \eta^k_r} \leq C_l\  \brac{2^k r}^{-l}$.

\newpage
\section{Proof of Theorem \ref{th:main}}
Let $\abar \in (0,n)$, $\pbar  = \frac{n}{\abar} \in (1,\infty)$, and $u \in
L^{\pbar}(\R^n,\R^N)$, $\laps{\abar} u \in L^{\pbar }(\R^n,\R^N)$. Assume
moreover, that $D \subsubset \R^n$ such that \eqref{eq:uinsphere} \eqref{eq:upde} holds.

As  \eqref{eq:upde} and \eqref{eq:uinsphere} are equations satisfied by any
critical point $u$ of Theorem \ref{th:main}, we have to show the following 
\begin{theorem}\label{th:main2}
Let $u \in L^{\pbar}(\R^n)$, $\laps{\abar} u \in L^p(\R^n)$ satisfy
\eqref{eq:upde}, \eqref{eq:uinsphere}. Then $u$ is H\"older continuous in $D$.
\end{theorem}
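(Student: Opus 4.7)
The plan is to follow the three-step roadmap outlined in the introduction: (i) use the sphere condition to control the normal component $u\cdot \laps{\abar}u$ via the three-term commutator, (ii) use the Euler--Lagrange equation \eqref{eq:upde} to control the tangential components $\omega_{ij} u^i \laps{\abar} u^j$ for finitely many antisymmetric $\omega$, and (iii) combine (i) and (ii) via the orthogonal decomposition (Proposition \ref{pr:orthogdecomp}) into an iteration-friendly decay estimate on $\vrac{\laps{\abar}u}_{\pbar,B_r}$.

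For (i), the identity \eqref{eq:threecomm} together with $\laps{\abar}|u|^2 = \laps{\abar}(1) = 0$ on $D$ gives $-2 u\cdot \laps{\abar}u = H_{\abar}(u,u)$ on $D$, up to localization errors that must be tracked (the nonlocal operator does see $|u|^2-1$ outside of $D$). Applying a local version of Theorem \ref{th:lowerorderalphaln} against the cutoffs $\eta^k_r$ and using Riesz-transform bounds yields
$$\vrac{u\cdot \laps{\abar}u}_{\pbar,B_r} \leq C\,\vrac{\laps{\abar}u}^{\,2}_{\pbar,B_{\Lambda r}} + C\,\Lambda^{-\gamma}\,\vrac{\laps{\abar}u}_{\pbar,\R^n}\,\suml_{k\geq 1} 2^{-\gamma k}\, \vrac{\laps{\abar}u}_{\pbar,A_{\Lambda r,k}}$$
for some $\gamma>0$, where the off-diagonal decay comes from the expansion kernels in Theorem \ref{th:lowerorderalphaln}. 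For (ii), plug $\omega_{ij} u^j \varphi$ into \eqref{eq:upde} with $\varphi$ a cutoff equal to $1$ on $B_r$. The Leibniz-type expansion
$$\laps{\abar}(\omega_{ij} u^j \varphi) = \omega_{ij}\varphi\,\laps{\abar} u^j + \omega_{ij} u^j\,\laps{\abar}\varphi + H_{\abar}(\omega_{ij} u^j,\varphi)$$
kills the first summand when contracted against $\sabs{\laps{\abar}u}^{\pbar-2}\laps{\abar} u^i$ by antisymmetry of $\omega$, so what remains is a commutator term (controlled by Theorem \ref{th:lowerorderalphaln}) and a tail term in $\laps{\abar}\varphi$ (controlled by pointwise kernel bounds on $\R^n\setminus B_{\Lambda r}$). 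Feeding both estimates into the orthogonal decomposition of $\laps{\abar}u$ produces the master inequality
$$\vrac{\laps{\abar}u}_{\pbar,B_r} \leq C\,\vrac{\laps{\abar}u}^{\,2}_{\pbar,B_{\Lambda r}} + C\,\Lambda^{-\gamma}\,\vrac{\laps{\abar}u}_{\pbar,\R^n}\,\suml_{k=1}^\infty 2^{-\gamma k}\,\vrac{\laps{\abar}u}_{\pbar,A_{\Lambda r,k}},$$
valid for $\Lambda>2$ and $r$ small. Choosing $\Lambda$ large (small tails) and $r$ small (so that $\vrac{\laps{\abar}u}_{\pbar,B_{\Lambda r}}$ is small by absolute continuity), a standard hole-filling/iteration argument as in \cite{SNHarmS10,DR1dSphere,Sfracenergy} produces polynomial decay $\vrac{\laps{\abar}u}_{\pbar,B_r(x_0)} \leq C\,r^\delta$ for all $x_0 \in D'\subsubset D$ and some $\delta>0$, which at the critical index $\abar=n/\pbar$ is equivalent to a small-exponent Hölder estimate on $u$.

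The \emph{main obstacle} is the weight $\sabs{\laps{\abar}u}^{\pbar-2}$ in \eqref{eq:upde}, which has no counterpart in the $n/2$-harmonic setting. It rules out a direct $L^2$-duality argument: the contraction $\sabs{\laps{\abar}u}^{\pbar-2}\laps{\abar} u$ naturally lives in $L^{\pbar'}$, so the closing inequality must be set up in the $L^{\pbar}$--$L^{\pbar'}$ pairing. This forces the use of the Lorentz-scale estimate \eqref{eq:Halphauvlpqest} of Theorem \ref{th:lowerorderalphaln} with exponents Hölder-conjugate to the weight. To avoid a circular estimate one must extract smallness only from the local $L^{\pbar}$-mass of $\laps{\abar}u$ and from the $\Lambda$-dependent tails, never from the weight itself (which is singular for $\pbar<2$ and merely bounded for $\pbar\geq 2$). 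Handling both regimes uniformly is precisely the step where the sphere hypothesis \eqref{eq:uinsphere} pays off decisively, since it lets the entire argument run through $q_1=q_2=\pbar$ in \eqref{eq:Halphauvlpqest}, bypassing the $L^{\pbar,\infty}$--$L^{\pbar,1}$ difficulties that appear in the general manifold case.
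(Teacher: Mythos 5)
Your roadmap coincides with the paper's: localize, control the normal component $u\cdot\laps{\abar}u$ through the three-term commutator, control the tangential components $\omega_{ij}u^j\laps{\abar}u^i$ through the Euler--Lagrange equation after killing the diagonal term by antisymmetry, recombine via Proposition \ref{pr:orthogdecomp}, and iterate. You also correctly identify the weight $\sabs{\laps{\abar}u}^{\pbar-2}$ as the new difficulty and correctly place the estimates in the $L^{\pbar}$--$L^{\pbar'}$ pairing. But you stop exactly where that difficulty has to be resolved: testing the Euler--Lagrange equation against $\varphi$ with $\vrac{\laps{\abar}\varphi}_{\pbar}\leq 1$ only controls the \emph{weighted} quantity $\vrac{\sabs{\laps{\abar}w}^{\pbar-2}\,\omega_{ij}w^j\laps{\abar}w^i}_{\pbar'}$, whereas the orthogonal decomposition \eqref{eq:antisym:absqcontrol} needs the unweighted quantity $\vrac{\omega_{ij}w^j\laps{\abar}w^i}_{\pbar}^{\pbar-1}$. ``Feeding both estimates into the orthogonal decomposition'' is therefore not automatic.

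For $\pbar\geq 2$ the passage is a pointwise inequality, since $\sabs{\omega_{ij}w^j\laps{\abar}w^i}\aleq\sabs{\laps{\abar}w}$ and $t\mapsto t^{\pbar-2}$ is nondecreasing. For $\pbar\in(1,2)$ it fails pointwise on the set where the tangential component is much smaller than $\sabs{\laps{\abar}w}$, precisely because the weight is then singular in the wrong direction. The paper's fix (proof of Lemma \ref{la:wallest}) is to split $B_{\Lambda^{-1}r}$ into the good set $A_{\omega,\theta}=\{\sabs{w^i\omega_{ij}\lapa w^j}\geq\theta\sabs{\laps{\abar}w}\}$, where the weighted and unweighted quantities are comparable, and its complement, where the second (``pigeonhole'') half of Proposition \ref{pr:orthogdecomp} guarantees that some \emph{other} $\tilde\omega\in\Omega$, or the normal component, dominates $\theta\sabs{\laps{\abar}w}$; summing over the finite set $\Omega$ then closes the estimate. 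This set decomposition is an additional idea not contained in your sketch and is needed for the full range $\pbar\in(1,\infty)$ claimed by the theorem. A second, smaller omission: the subcritical error terms \eqref{eq:wel:diff1}--\eqref{eq:wel:diff4} produced by the cutoff $w=\etab u$ are only waved at as ``localization errors,'' while the paper devotes Propositions \ref{pr:differenceguys} and \ref{pr:stupidestumw} to showing, via disjoint-support kernel estimates, that they contribute only $C\,r^\gamma$.
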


In order to prove Theorem \ref{th:main2}, we first rewrite equations
\eqref{eq:upde} and \eqref{eq:uinsphere} in a fashion similar to
\cite{DR1dSphere},\cite{SNHarmS10}: Firstly, Equation \eqref{eq:upde} is
equivalent to
\begin{equation}\label{eq:uel}
\intl_{\R^n} \sabs{\laps{\abar} u}^{\pbar -2}\ u^j \omega_{ij} \laps{\abar} u^i\
\laps{\abar} \varphi = -\intl_{\R^n} \sabs{\laps{\abar} u}^{\pbar -2}\
\laps{\abar} u^i\ \omega_{ij}\ H(u^j,\varphi)
\quad \mbox{for all $\varphi \in C_0^\infty(D)$, $\omega_{ij} = - \omega_{ji}
\in \R$}.
\end{equation}
Here and henceforth, 
\[
  H(a,b) \equiv H_\abar(a,b) \equiv \laps{\abar}(ab) - a \laps{\abar} b - b
\laps{\abar} a.
\]
Assume we prove H\"older-continuity of $u$ in a Ball $B \subsubset D$. Pick a
slightly bigger ball $\tilde{B} \subsubset D$, $\tilde{B} \supsupset B$, and let
$w := \etab u$, for some 
\[
\etab \in C_0^\infty(D,[0,1]),\quad \etab \equiv 1 \mbox{ on
$\tilde{B}$}.
\]
Note that $w \in L^p(\R^n)$ for any $p \in [1,\infty]$. It suffices
to show H\"older regularity for $w$. The relevant equations for $w$ stemming
from \eqref{eq:uinsphere} and \eqref{eq:uel} are then (again, cf.
\cite{DR1dSphere}, \cite{SNHarmS10})
\begin{equation}\label{eq:winsphere}
w \cdot \laps{\abar} w = \frac{1}{2} H(w,w) + \frac{1}{2} \laps{\abar} \etab^2 
\quad \mbox{a.e. in $\R^n$},
\end{equation}
and for all $\varphi \in C_0^\infty(D)$, $\omega_{ij} = - \omega_{ji} \in \R$,
\begin{align}
&\intl_{\R^n} \sabs{\laps{\abar} w}^{\pbar -2}\ w^j \omega_{ij} \laps{\abar}
w^i\ \laps{\abar} \varphi\label{eq:wel}\\
&= \omega_{ij} \intl_{\R^n} \brac{\sabs{\laps{\abar} w}^{\pbar -2}\ \laps{\abar}
w^i\ -\sabs{\laps{\abar} u}^{\pbar -2} \laps{\abar} u^i} w^j  \laps{\abar}
\varphi \label{eq:wel:diff1}\\
&+ \omega_{ij} \intl_{\R^n} \brac{\sabs{\laps{\abar} u}^{\pbar -2} \laps{\abar}
u^i} (w^j-u^j)  \laps{\abar} \varphi\label{eq:wel:diff2}\\
& +\omega_{ij} \intl_{\R^n} \sabs{\laps{\abar} u}^{\pbar -2}\ \laps{\abar} u^i\
\ H(w^j-u^j,\varphi)\label{eq:wel:diff3}\\
  & +\omega_{ij} \intl_{\R^n} \brac{\sabs{\laps{\abar} w}^{\pbar -2}\
\laps{\abar} w^i - \sabs{\laps{\abar} u}^{\pbar -2}\ \laps{\abar} u^i}\ \
H(w^j,\varphi)\label{eq:wel:diff4}\\
& -\omega_{ij}\intl_{\R^n} \sabs{\laps{\abar} w}^{\pbar -2}\ \laps{\abar} w^i\ \
H(w^j,\varphi).\label{eq:wel:ess}
\end{align}
Now we need to appropriately adapt several arguments of
\cite{DR1dSphere},\cite{SNHarmS10}: First of all, using \eqref{eq:winsphere} we
control $\laps{\abar} w$ projected into the orthogonal space to the sphere at
the point $w$, $T^\perp_w \S^{N-1}$.
\subsection*{The orthogonal part}
 Namely, from \eqref{eq:winsphere} and
Lemma~\ref{la:Hvwlocest} one infers
\begin{lemma}\label{la:orthest}
There is $\gamma = \gamma_{\abar.\pbar} > 0$ and a constant $C$ depending on the
choice of $B$, $\tilde{B}$, $\etab$, such that the following holds: For any
$\varepsilon > 0$ there exists $\Lambda > 0$, $R > 0$, such that for any
$B_{\Lambda r} \subsubset B$, $r \in (0,R)$,
\[
 \vrac{w \cdot \laps{\abar} w}_{\pbar,B_r} \leq \varepsilon\ \vrac{ \laps{\abar}
w }_{\pbar, B_{\Lambda r}} + C\ r^\gamma + \varepsilon \sum_{k=1}^\infty
2^{-\gamma k}\ \Vert \laps{\abar} w \Vert_{\pbar,B_{2^k \Lambda r}\backslash
B_{2^{k-1} \Lambda r}}.
\]
\end{lemma}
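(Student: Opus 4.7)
The approach is to exploit the algebraic identity \eqref{eq:winsphere},
\[
w \cdot \laps{\abar} w \;=\; \tfrac{1}{2}\, H(w,w) \;+\; \tfrac{1}{2}\, \laps{\abar}\etab^{2},
\]
and to estimate the two right-hand side contributions separately in the $L^{\pbar}(B_r)$-norm. The first piece will account for the geometric part of the claim, the second for the $C\,r^\gamma$ term.

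The smooth term is handled directly. Since $\etab \in C_0^\infty(D)$, also $\etab^2 \in C_0^\infty(D)$, so $\laps{\abar}\etab^{2}$ is a fixed bounded function determined entirely by $\etab$ (smooth on $D$ and with polynomial decay at infinity). Consequently
\[
\vrac{\laps{\abar}\etab^{2}}_{\pbar,B_r} \;\leq\; \Vert\laps{\abar}\etab^{2}\Vert_{\infty}\,|B_r|^{1/\pbar} \;\leq\; C_{\etab}\,r^{\abar},
\]
which contributes the $C\,r^{\gamma}$ term for any $\gamma \in (0,\abar]$, with a constant depending only on $\etab$ (hence only on $B$, $\tilde B$) and independent of $\varepsilon$, as required.

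The bulk of the work is to bound $\|H(w,w)\|_{\pbar,B_r}$, and this is precisely the object controlled by the quoted Lemma~\ref{la:Hvwlocest}. Taking that lemma for granted produces both the local piece $\varepsilon\,\vrac{\laps{\abar} w}_{\pbar, B_{\Lambda r}}$ and the geometric tail $\varepsilon \sum_k 2^{-\gamma k}\,\vrac{\laps{\abar} w}_{\pbar, B_{2^k\Lambda r}\setminus B_{2^{k-1}\Lambda r}}$; combining with the smooth-term bound above yields the claim with $\gamma$ the minimum of $\abar$ and the decay exponent furnished by Lemma~\ref{la:Hvwlocest}.

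The real difficulty therefore lies inside Lemma~\ref{la:Hvwlocest}, and I expect its proof to proceed as follows: apply Theorem~\ref{th:lowerorderalphaln} pointwise to $H_\abar(w,w)$ to write it as a finite sum of products of Riesz potentials acting on $|\laps{\abar} w|$; then dyadically split each copy of $w$ through the cutoffs $\eta_{\Lambda r}^k$ associated with the annuli $B_{2^k\Lambda r}\setminus B_{2^{k-1}\Lambda r}$. The ``inside-inside'' piece is estimated by \eqref{eq:Halphauvlpqest} and yields the local $L^\pbar$ contribution; the ``cross'' and ``outside'' pieces inherit the geometric damping $2^{-\gamma k}$ from the Riesz kernels $\lapms{s_k-t_k}$ with $s_k-t_k > 0$, since these are evaluated at $x \in B_r$ against mass sitting at distance $\sim 2^k\Lambda r$. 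The small prefactor $\varepsilon$ finally materializes from the smallness of the bilinear companion factor $\vrac{\laps{\abar} w}_{\pbar, B_{2\Lambda R}}$ via absolute continuity of $\int|\laps{\abar} w|^\pbar$, which forces the order of quantifiers: fix $\gamma$ first, then $\Lambda$ large enough for the geometric sum to converge, then $R$ small enough to bury one copy of $\vrac{\laps{\abar} w}_\pbar$ under $\varepsilon$. The main obstacle is that Theorem~\ref{th:lowerorderalphaln} only guarantees $s_k - t_k$ arbitrarily small, so $\gamma$ is a priori tiny and the bookkeeping of the exponents appearing in the kernel expansion must be done with care.
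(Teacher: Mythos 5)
Your proof follows exactly the paper's route: the paper derives Lemma~\ref{la:orthest} precisely from the identity \eqref{eq:winsphere} together with the localized commutator estimate of Lemma~\ref{la:Hvwlocest}, with the smooth term $\laps{\abar}\etab^2$ absorbed into $C\,r^\gamma$ and the $\varepsilon$-prefactors obtained by choosing $\Lambda$ large and then $R$ small via absolute continuity of $\int|\laps{\abar}w|^{\pbar}$. Your sketch of how Lemma~\ref{la:Hvwlocest} itself is proved also matches the paper's appendix (cutoff decomposition plus disjoint-support decay of the Riesz kernels), so the proposal is correct and takes essentially the same approach.
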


The next step is to control the tangential part of $\laps{\abar} w$ by means of
\eqref{eq:wel}. The terms on the right-hand side of \eqref{eq:wel} can be
divided into two groups. The integrands of \eqref{eq:wel:diff1},
\eqref{eq:wel:diff2}, \eqref{eq:wel:diff3}, and \eqref{eq:wel:diff4} always
contain differences of the form $w-u$ which is trivial in $\tilde{B}$.
Consequently, we show that these terms behave subcritical.
\subsection*{Estimates of Tangential Part: Subcritical Terms
(\ref{eq:wel:diff1}), (\ref{eq:wel:diff2}), (\ref{eq:wel:diff3}),
(\ref{eq:wel:diff4})}
To be more precise, assume $\abs{\omega_{ij}} \leq 2$. We claim that for any
$\varphi \in C_0^\infty(B_r)$, where $B_{2r} \subset B$, all but the last term
on the right-hand side of \eqref{eq:wel} can be estimated by a constant
depending on $u$ and $B$ and the distance between $B_r$ and $\partial \tilde{B}$
times $r^\gamma$ for some $\gamma$:

\begin{proposition}[Subcritical Terms]\label{pr:differenceguys} There exists a
constant $C$ depending on $u$, $B$, $\tilde{B}$, the choice of $\etab$, and an
exponent $\gamma \equiv \gamma_{\pbar,\abar}> 0$ such that for any $\varphi \in
C_0^\infty(B_r)$, $\vrac{\laps{\abar} \varphi}_{\pbar} \leq 1$ for arbitrary
$B_{2r} \subset B$, if
\[
\begin{ma}
I &:=& \intl_{\R^n} \brac{\sabs{\laps{\abar} w}^{\pbar -2}\ \laps{\abar} w^i\
-\sabs{\laps{\abar} u}^{\pbar -2} \laps{\abar} u^i} w^j  \laps{\abar} \varphi,\\
II &:=& \intl_{\R^n} \brac{\sabs{\laps{\abar} u}^{\pbar -2} \laps{\abar} u^i}
(w^j-u^j)  \laps{\abar} \varphi,\\
III &:=& \intl_{\R^n} \sabs{\laps{\abar} u}^{\pbar -2}\ \laps{\abar} u^i\ \
H(w^j-u^j,\varphi),\\
IV &:=& \intl_{\R^n} \brac{\sabs{\laps{\abar} w}^{\pbar -2}\ \laps{\abar} w^i -
\sabs{\laps{\abar} u}^{\pbar -2}\ \laps{\abar} u^i}\ \ H(w^j,\varphi),
\end{ma}
\]
then
\[
 \abs{I} + \abs{II} + \abs{III} + \abs{IV} \leq C\ r^\gamma.
\]
\end{proposition}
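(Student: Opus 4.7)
The essential observation is that by construction $w - u = (\etab - 1)u$ vanishes identically on $\tilde{B}$, so $\supp(w - u)$ is separated from every ball $B_r \subset B$ by the positive distance $d_0 := \dist(B,\supp(\etab - 1)) > 0$. Two structural ingredients follow. $(\star)$: on $B$, the function $\laps{\abar}(w - u) = \laps{\abar}((\etab - 1)u)$ is given by convolution against a smooth integrable kernel, and hence is of class $C^\infty(B)$ with bounds depending only on $u$, $\etab$, and $d_0$, uniformly in $r$. $(\star\star)$: for $\varphi \in C_0^\infty(B_r)$ with $\vrac{\laps{\abar}\varphi}_{\pbar} \leq 1$, a dyadic decomposition around $B_r$ combined with the off-support representation of the fractional Laplacian furnishes decay estimates for $\laps{\abar}\varphi$ outside $B_{\Lambda r}$, with a gain $\Lambda^{-\delta}$ for some $\delta = \delta(\pbar,\abar) > 0$. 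Every factor $r^\gamma$ in what follows will be harvested from $(\star)$ or $(\star\star)$, together with the critical embedding $\vrac{\varphi}_{q,\R^n} \aleq \vrac{\laps{\abar}\varphi}_{\pbar,\R^n}$ valid for every finite $q$.

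For $II$ and $III$, in which the difference $w - u$ enters literally, I would proceed as follows. For $II$ I would split $\R^n = (\R^n\setminus\tilde{B})\cup\tilde{B}$; the integrand vanishes on $\tilde{B}$, and on the complement H\"older's inequality applied to the three factors $|\laps{\abar}u|^{\pbar-2}\laps{\abar}u \in L^{\pbar'}$, $w - u \in L^\infty$, and $\laps{\abar}\varphi$ (controlled via $(\star\star)$) yields the bound. For $III$ the crucial trick is that $(w - u)\varphi \equiv 0$, which collapses the three-term commutator to
\[
 H(w^j - u^j,\varphi) = -(w^j - u^j)\,\laps{\abar}\varphi \;-\; \varphi\,\laps{\abar}(w^j - u^j).
\]
The first summand has exactly the structure of $II$; the second pairs $\varphi$, which by the critical embedding lies in $L^q$ for all finite $q$, against $\laps{\abar}(w - u) \in L^\infty(B)$ from $(\star)$, and H\"older on the support $B_r$ of $\varphi$ produces the factor $r^{n/q}$.

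For $I$ and $IV$ the nonlinear difference is to be handled through the standard vectorial inequality
\[
 \Babs{|a|^{\pbar-2}a - |b|^{\pbar-2}b} \aleq
 \begin{cases} (|a|+|b|)^{\pbar-2}\,|a - b|, & \pbar \geq 2,\\
 |a - b|^{\pbar - 1}, & 1 < \pbar < 2,
 \end{cases}
\]
applied with $a = \laps{\abar}w$ and $b = \laps{\abar}u$, reducing the difference to a power of $\laps{\abar}(w - u)$, which by $(\star)$ is uniformly bounded on $B$. What remains is, for $I$, a pairing of $w^j\,\laps{\abar}\varphi$ against the $L^{\pbar'}$-factor $(|\laps{\abar}w|+|\laps{\abar}u|)^{\pbar-2}$, and for $IV$ the commutator $H(w^j,\varphi)$, controlled in $L^{\pbar'}$ through Theorem~\ref{th:lowerorderalphaln} by $\vrac{\laps{\abar}w}_{\pbar}\vrac{\laps{\abar}\varphi}_{\pbar}$. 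The main obstacle I expect is the subquadratic case $1 < \pbar < 2$ of $I$ and $IV$, in which $|\laps{\abar}(w - u)|^{\pbar - 1}$ is not a priori in $L^{\pbar'}(\R^n)$; the remedy is precisely $(\star)$, which gives an $L^\infty(B)$-bound uniform in $r$, so the estimate is to be carried out in $L^\infty(B)\times L^1(B_r)$ instead of $L^{\pbar'}(\R^n)\times L^\pbar(\R^n)$, and the factor $|B_r|^{\abar/n} = r^\abar$ extracted from the $L^1(B_r)$-norm of a critically-embedded quantity delivers the final $r^\gamma$.
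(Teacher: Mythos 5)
Your proposal follows essentially the same route as the paper's proof: the vectorial inequality for $\abs{a}^{\pbar-2}a-\abs{b}^{\pbar-2}b$, the positive distance between $\supp((1-\etab)u)$ and $\supp\varphi$, the collapse of $H(w^j-u^j,\varphi)$ via $(w^j-u^j)\varphi\equiv 0$, and the commutator bound for $H(w^j,\varphi)$ are exactly the ingredients the paper uses (routing all disjoint-support estimates through Proposition~\ref{pr:stupidestumw}). Two steps as written, however, do not close. First, in $II$ you place $w-u=(\etab-1)u$ in $L^\infty(\R^n)$. The hypotheses give $\abs{u}=1$ only a.e.\ on $D$, and off $D$ only $u\in L^{\pbar}(\R^n)$, so there is no pointwise bound on $(\etab-1)u$ (this is why the paper can say $w=\etab u\in L^\infty$ but never claims it for $u$). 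The paper instead pairs $u\in L^{\pbar}$ with the $L^\infty$-smallness of $(1-\etab)\laps{\abar}\varphi$ from \eqref{eq:prstupidestumw:second}; the factorization in your H\"older application must be rearranged accordingly.

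Second, for $I$ and $IV$ the reduction of the nonlinear difference to a power of $\laps{\abar}(w-u)$ that is bounded on $B$ only controls the part of the integral near $B$: both integrals extend over all of $\R^n$, the factors $\laps{\abar}\varphi$ and $H(w^j,\varphi)$ have global tails, and on $\R^n\setminus\tilde{B}$ the quantity $\laps{\abar}(w-u)$ is merely an $L^{\pbar}$ function, so $(\star)$ gives nothing there. On that far region the smallness must come from the test-function side, e.g.\ $\Vrac{(1-\etab)\laps{\abar}\varphi}_{\infty}\aleq C_d\, r^{\gamma}$, which ultimately rests on Poincar\'e's inequality for $\varphi\in C_0^\infty(B_r)$; your $(\star\star)$, formulated as a $\Lambda^{-\delta}$ gain outside $B_{\Lambda r}$, is the mechanism for the critical term (Propositions~\ref{pr:Hwvpakest} and \ref{pr:HwvpBlrest}), not the $r^{\gamma}$ gain at the \emph{fixed} distance $d$ that is needed here. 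Two smaller slips: Theorem~\ref{th:lowerorderalphaln} controls $H(w^j,\varphi)$ in $L^{\pbar}=L^{n/\abar}$, not in $L^{\pbar'}$ (this is in fact the right space to pair against the $L^{\pbar'}$ difference in $IV$); and the ``critical embedding'' $\vrac{\varphi}_{q,\R^n}\aleq\vrac{\laps{\abar}\varphi}_{\pbar,\R^n}$ cannot hold with a uniform constant for finite $q$ by scaling --- the correct, and for your purposes better, statement for $\varphi\in C_0^\infty(B_r)$ carries the factor $r^{n/q}$ on the right-hand side.
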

\begin{proof}
Note that for any $x,y \in \R^n$ and any $a,b \in \R^N$ we have (cf., e.g.,
\cite[Proposition 4.1]{SNHarmS10Arxiv})
\[
 \abs{ \abs{a}^{p-2}a - \abs{b}^{p-2}b} \leq C_p\ \begin{cases}
				      \abs{a-b}^{p-1} \quad &\mbox{if $p \in
[1,2]$,}\\
                                      \abs{a-b}^{p-1} + \abs{a-b} \abs{b}^{p-2}
\quad &\mbox{if $p > 2$}.
                                     \end{cases}
\]
Now we argue via Proposition~\ref{pr:stupidestumw}. In order to do so, we use that there is some
positive distance $d \equiv d_{B,\tilde{B}} > 0$, such that $\dist (\supp \varphi,
\R^n \backslash \tilde{B}) > d$. This is straight-forward for $I$ and $IV$
(recall that $w \in L^\infty$ for $I$ and use Proposition~\ref{pr:Hest} for
$IV$). For $III$, we apply Proposition~\ref{pr:stupidestumw} to the terms,
\[
 H(w^j-u^j,\varphi) = H((\etab-1)u^j,\varphi) = \varphi\ \laps{\abar}
((\etab-1)u^j) + (\etab-1)u^j\ \laps{\abar} \varphi,
\]
and finally, for $II$ we first estimate
\[
 \vrac{(1-\etab) u \laps{\abar} \varphi }_{\pbar} \aleq \Vrac{u}_{\pbar}\ \
\Vert (1-\etab)\laps{\abar} \varphi \Vert_{\infty}.
\]
\end{proof}

In the proof of Proposition~\ref{pr:differenceguys} we used the following
estimate, which can be proven by an argument which appears in a similar form
already in \cite{DR1dSphere}.
\begin{proposition}[Estimates for disjoint-support terms]\label{pr:stupidestumw}
Let $r \in (0,1)$, $d > 0$, $p, q \in [1,\infty)$ such that $B_{r+d} \subset
\tilde{B}$. Then, for any $f \in L^q(\R^n)$, with $\Delta^{\alpha/2} f \in L^q(\R^n)$, 
\begin{equation}\label{eq:prstupidestumw:first}
 \Vrac{ \laps{\alpha} ((1-\etab)f)}_{p,B_{r}} \aleq C_d\ r^{\frac{n}{p}}\
({\vrac{f}_{q,\R^n}+\vrac{\lapa f}_{q,\R^n}}).
\end{equation}
And if moreover $\supp f \subset B_r$, for some $\gamma = \gamma_\alpha$,
\begin{equation}\label{eq:prstupidestumw:second}
 \Vrac{(1-\etab) \laps{\alpha} f}_{\infty,\R^n} \aleq C_d\ r^{\gamma}\
\Vrac{\lapa f}_{q}.
\end{equation}
\end{proposition}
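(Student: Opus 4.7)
The plan rests on the geometric fact that $\etab \equiv 1$ on $\tilde B \supset B_{r+d}$, so $(1-\etab)$ vanishes on $B_{r+d}$. For \eqref{eq:prstupidestumw:first} this forces $(1-\etab)f$ to vanish on $B_r$ with its support at distance $\geq d$ from every $x \in B_r$; for \eqref{eq:prstupidestumw:second} the roles swap, with $\supp(1-\etab) \subset \R^n \setminus \tilde B$ sitting at distance $\geq d$ from $\supp f \subset B_r$. Both estimates therefore reduce to bounding $\lapa$ of a function whose support is $d$-separated from the evaluation point, a setting in which the singular part of the fractional Laplacian kernel is absent and a smooth pointwise bound is available.

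For \eqref{eq:prstupidestumw:first}, fix $x \in B_r$. When $\alpha \in (0,2)$ the singular-integral representation of $\lapa$, together with $(1-\etab(x))f(x) = 0$, gives
\[
|\lapa((1-\etab)f)(x)| \aleq \intl_{\R^n\setminus B_{r+d}} \frac{|f(y)|}{|x-y|^{n+\alpha}}\,dy \leq C_d\ \vrac{f}_{q,\R^n}
\]
by H\"older in $y$, since the kernel restricted to $\{|x-y|\geq d\}$ lies in $L^{q'}$. For $\alpha \in [2,n)$ I would factor $\lapa = c\,\lap^{k}\lapms{2k-\alpha}$ with an integer $k$ chosen so that $\alpha/2 < k < (n+\alpha)/2$: on the disjoint-support region the Riesz potential kernel $|x-y|^{-(n-(2k-\alpha))}$ is classically smooth in $x$, so $\lap^{k}$ may be taken inside the integral, producing again a kernel of size $|x-y|^{-(n+\alpha)}$ and the same pointwise $L^\infty$-bound. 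Taking the $L^p$-norm over $B_r$ then yields the advertised factor $r^{n/p}$.

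For \eqref{eq:prstupidestumw:second} the identical pointwise argument applied at $x \notin \tilde B$, where $f(x) = 0$, gives
\[
|\lapa f(x)| \aleq C_d \intl_{B_r}|f(y)|\,dy \aleq C_d\ r^{n/q'}\,\vrac{f}_q.
\]
To convert $\vrac{f}_q$ into $\vrac{\lapa f}_q$, I would invoke $f = \lapms{\alpha}\lapa f$ and Hardy--Littlewood--Sobolev: for $q < n/\alpha$ one has $\vrac{f}_{q^*}\aleq\vrac{\lapa f}_q$ with $1/q^* = 1/q - \alpha/n$, and H\"older on $B_r$ (using $\supp f \subset B_r$) delivers $\vrac{f}_q \aleq r^{\alpha}\vrac{\lapa f}_q$; for $q \geq n/\alpha$ the same type of gain follows by interpolation between two adjacent non-critical embeddings, or in Lorentz scales at the critical $q = n/\alpha$. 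Combining the two steps produces the estimate with $\gamma = \alpha$, depending only on $\alpha$ as required.

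The main technical obstacle is twofold: passing from the singular-integral formula for $\lapa$ to the Riesz-potential factorization when $\alpha \geq 2$, and performing the Sobolev step in (2) at the critical exponent $q = n/\alpha$ which is the one relevant for the later application to term $II$ in Proposition~\ref{pr:differenceguys}. Both are routine once the disjoint-support geometry is pinned down, since the latter removes every singular contribution to the kernel and legitimizes differentiation under the integral.
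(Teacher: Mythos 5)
Your argument is correct and follows essentially the same route as the paper: both proofs exploit that the kernel of $\laps{\alpha}$ off the diagonal is a smooth function bounded by $\abs{z}^{-n-\alpha}$, so disjoint supports turn $\laps{\alpha}$ into convolution with an integrable tail (handled by Young/H\"older), and both then produce the factor $r^\gamma$ in \eqref{eq:prstupidestumw:second} from a Poincar\'e--Sobolev step for $f$ supported in $B_r$. The only difference is cosmetic: for $\alpha \geq 2$ the paper writes $\laps{\alpha} = \laps{s}\lap^K$ and puts the integer derivatives on $f$ (which then requires interpolation to control $\vrac{\lap^K f}_q$ by $\vrac{f}_q + \vrac{\lapa f}_q$), whereas you put them on the Riesz-potential kernel; your variant is equally valid and even avoids that interpolation step, and your explicit treatment of the critical exponent $q = n/\alpha$ in the Sobolev step addresses a point the paper passes over with a bare appeal to ``Poincar\'e's inequality.''
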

\begin{proof}
Instead of the argument using Fourier-transform as in \cite{DR1dSphere},
\cite{SNHarmS10} etc., we use the following argument: Let $\alpha =: K/2 + s$,
where $s \in (0,2)$ (the case $s = 0$ is trivial), $K \in 2\N$, that is
\[
 \lapa = \laps{s} \lap^K.
\]
Set $h := \lap^K g$, and recall that
\[
 \laps{s} h(x) = c \int \frac{h(x+z)+h(x-z) - 2 h(x)}{\abs{z}^{n+s}}\ dz
\]
Thus, if $\supp g \subset \R^n \backslash B_{r+d}$ and $x \in B_r$, as is the
case in \eqref{eq:prstupidestumw:first}, or $\supp g \subset B_r$ and $x \in
\R^n \backslash B_{r+d}$, as is the case in \eqref{eq:prstupidestumw:second} 
\[
 \sabs{\laps{s} h(x)} \aleq \int_{\abs{z} > d} \abs{h(x+z)}\ \abs{z}^{-n-s}\ dz
= \abs{h}\ast \brac{\abs{\cdot}^{-n-s}\chi_{\abs{\cdot}>d}}(x),
\]
and
\[
 \begin{ma}
  \vrac{\abs{h}\ast \brac{\abs{\cdot}^{-n-s}\chi_{\abs{\cdot}>d}}}_{p} &\aleq&
\vrac{h}_{q}\ d^{-n(q_2-1)-2s q_2}\\
&\aleq& d^{-n(q_2-1)-2s q_2}\ ({\vrac{f}_{q,\R^n}+\vrac{\lap^K f}_{q,\R^n}}),
\end{ma}
\]
where $q_2$ is chosen such that
\[
 1+\fracm{p} = \fracm{q} + \fracm{q_2}.
\]
For \eqref{eq:prstupidestumw:first} we then use the fact that $W^{K,q} \subset
W^{\alpha,q}$, that is
\[
 \vrac{f}_{q,\R^n}+\vrac{\lap^K f}_{q,\R^n} \aleq \vrac{f}_{q,\R^n}+\vrac{\lapa
f}_{q,\R^n}.
\]
For \eqref{eq:prstupidestumw:first}, note that by Poincar\'e's inequality,
\[
 \vrac{f}_{q,\R^n}+\vrac{\lap^K f}_{q,\R^n} \aleq \vrac{\lap^K f}_{q,\R^n} \aleq
r^{\alpha-K}\ \vrac{\lapa f}_{q,\R^n}.
\]

\end{proof}

\subsection*{Estimates of Tangential Part - The Critical Term (\ref{eq:wel:ess})}
In order to estimate the tangential part of $\laps{\abar} w$ completely, we need to control the last term \eqref{eq:wel:ess}, which is done in the following
two propositions.

\begin{proposition}\label{pr:Hwvpakest}
There is a constant $C_\abar$, $R > 0$ and an exponent $\gamma > 0$ such that
the following holds. Let $\Vert w\Vert_{\infty} \leq 1$, and assume
$\laps{\abar} w \in L^{\pbar}(\R^n)$. Then for any $\varphi \in
C_0^\infty(B_r)$, $\Lambda \geq 5$, $r \in (0,R)$
\[
 \Vrac{H(w,\varphi)}_{\pbar,B_{2^k \Lambda r} \backslash B_{2^{k-1} \Lambda r}}
\leq C\ \ (2^k \Lambda)^{-\gamma}\ \Vert \laps{\abar} \varphi \Vert_{\pbar}.
\]
\end{proposition}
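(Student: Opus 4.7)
The crucial observation is that $\varphi \in C_0^\infty(B_r)$ and $\Lambda \geq 5$ force the annulus $A_k := B_{2^k \Lambda r} \setminus B_{2^{k-1}\Lambda r}$ to be disjoint from $\supp \varphi$, with $\dist(A_k,\supp \varphi) \gtrsim 2^k\Lambda r$. Hence $\varphi \equiv 0$ on $A_k$, and the three-term commutator collapses there to
\[
H(w,\varphi)(x) = \laps{\abar}(w\varphi)(x) - w(x)\,\laps{\abar}\varphi(x), \qquad x \in A_k.
\]
Both summands apply $\laps{\abar}$ to a function (namely $w\varphi$ resp.\ $\varphi$) supported in $B_r$, and evaluate on points at distance $\gtrsim 2^k\Lambda r$ from $B_r$. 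This is exactly the disjoint-support setting of Proposition~\ref{pr:stupidestumw}, only that the separation $2^k\Lambda r$ must be tracked explicitly in place of the fixed $d$ there.

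The plan is to mimic the proof of Proposition~\ref{pr:stupidestumw}. Decompose $\abar = 2K + s$, $s \in (0,2)$, $K$ a nonnegative integer, so that $\laps{\abar} = \laps{s}\lap^K$. The outer operator $\laps{s}$ is given by a singular integral with kernel $\abs{z}^{-n-s}$, which on the relevant range of translations (from $A_k$ to $B_r$) is bounded by $(2^k\Lambda r)^{-n-s}$; therefore, for any $g$ supported in $B_r$ and $x \in A_k$,
\[
\abs{\laps{\abar}g(x)} \aleq (2^k\Lambda r)^{-n-s}\,\vrac{\lap^K g}_{1,B_r}.
\]
Apply this with $g = \varphi$ (directly) and with $g = w\varphi$ (using $\vrac{w}_\infty \leq 1$ and expanding $\lap^K(w\varphi)$ by Leibniz), then use H\"older $\vrac{\cdot}_{1,B_r} \aleq r^{n/\pbar'}\,\vrac{\cdot}_{\pbar}$ together with the fractional Poincar\'e inequality $\vrac{\cdot}_{\pbar,B_r} \aleq r^{2K+s}\,\vrac{\laps{\abar}\,\cdot\,}_{\pbar}$ valid for functions supported in $B_r$. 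One obtains the pointwise bound
\[
\abs{H(w,\varphi)(x)} \aleq (2^k\Lambda r)^{-n-\abar}\,r^{n/\pbar'+\abar}\,\vrac{\laps{\abar}\varphi}_{\pbar}, \qquad x \in A_k,
\]
and integrating over $A_k$ (whose volume is $\sim (2^k\Lambda r)^n$) while using the critical identity $\abar = n/\pbar$ to cancel the $r$-factors delivers
\[
\vrac{H(w,\varphi)}_{\pbar,A_k} \aleq (2^k\Lambda)^{-n}\,\vrac{\laps{\abar}\varphi}_{\pbar},
\]
i.e.\ the claim with $\gamma = n > 0$.

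The main obstacle is the case $\abar \geq 2$: the Leibniz expansion of $\lap^K(w\varphi)$ produces mixed terms $\lap^j w\cdot\lap^{K-j}\varphi$ in which $w$ has only the regularity inherited from $\vrac{w}_\infty \leq 1$ and $\laps{\abar}w \in L^{\pbar}$, so the intermediate integer-order derivatives of $w$ must be controlled in suitable Lebesgue or Lorentz spaces via fractional Poincar\'e--Sobolev. This bookkeeping is precisely the technical heart of Proposition~\ref{pr:stupidestumw}; the extra factor $w$ is benign thanks to its $L^\infty$ bound. In the particularly clean case $n = 2$ this complication vanishes entirely, since $\abar = 2/\pbar < 2$ forces $K = 0$ and reduces the argument to the direct singular-integral estimate above.
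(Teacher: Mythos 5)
Your proof is correct and follows essentially the same route as the paper's: since $\Lambda \geq 5$ forces $\varphi \equiv 0$ on the annulus, $H(w,\varphi)$ collapses there to $\laps{\abar}(w\varphi) - w\,\laps{\abar}\varphi$, and both terms are handled by the disjoint-support kernel decay as in Proposition~\ref{pr:stupidestumw}, tracking the separation $2^k\Lambda r$ to land on $(2^k\Lambda)^{-n}$. The only blemish is a bookkeeping slip in the intermediate pointwise bound (the kernel of the outer $\laps{s}$ decays like $\abs{z}^{-n-s}$, not $\abs{z}^{-n-\abar}$, unless you use the off-diagonal representation of $\laps{\abar}$ itself with kernel $\abs{x-y}^{-n-\abar}$, which also sidesteps the Leibniz expansion you worry about); either way one obtains a positive exponent $\gamma$, which is all the proposition claims.
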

\begin{proof}
We have on $B_{2^k \Lambda r} \backslash B_{2^{k-1} \Lambda r}$
\[
 H(w,\varphi) = \laps{\abar} (w\varphi) - w \laps{\abar} \varphi.
\]
One checks that (exploiting the disjoint support via similar arguments as in
Proposition~\ref{pr:stupidestumw})
\[
 \Vert \laps{\abar} (w\varphi) \Vert_{\pbar,B_{2^k \Lambda r} \backslash
B_{2^{k-1} \Lambda r}} \aleq  \Vert w \Vert_{\infty}\ \brac{2^k \Lambda}^{-n}.
\]
and as well
\[
 \Vert w\ \laps{\abar} \varphi \Vert_{\pbar,B_{2^k \Lambda r} \backslash
B_{2^{k-1} \Lambda r}} \aleq  \Vert w \Vert_{\infty}\ \brac{2^k \Lambda}^{-n}.
\]
\end{proof}
Moreover,
\begin{proposition}\label{pr:HwvpBlrest}
There is a constant $C_\abar$ and an exponent $\gamma > 0$ such that the
following holds. Let $\Vert w\Vert_{\infty} \leq 1$, and assume $\laps{\abar} w
\in L^{\pbar}(\R^n)$. Then for any $\varphi \in C_0^\infty(B_r)$, $\Lambda \geq
5$
\[
 \Vrac{H(w,\varphi)}_{\pbar,B_{\Lambda r}} \leq C\
\brac{\Lambda^{-\gamma}\ \Vert \laps{\abar} w \Vert_{\pbar}+\Vert
\eta_{\Lambda^3 r}\laps{\abar} w \Vert_{\pbar}} \Vert \laps{\abar} \varphi
\Vert_{\pbar}.
\]
\end{proposition}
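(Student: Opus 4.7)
The plan is to use the pointwise lower-order expansion of $H(w,\varphi)$ provided by Theorem~\ref{th:lowerorderalphaln}, and then to split $\laps{\abar} w$ according to whether its mass lies in $B_{\Lambda^3 r}$ or outside. The near piece will be handled by the Lorentz-space estimate \eqref{eq:Halphauvlpqest}, while the far piece, supported well outside the evaluation ball $B_{\Lambda r}$, will be controlled by a disjoint-support kernel bound in the spirit of Proposition~\ref{pr:stupidestumw} and will yield the decay factor $\Lambda^{-\gamma}$.

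To make this precise, I decompose $\laps{\abar} w = f_{\text{in}} + f_{\text{out}}$ with $f_{\text{in}} := \eta_{\Lambda^3 r}\laps{\abar} w$ essentially supported in $B_{\Lambda^3 r}$, and set $w_{\text{in}} := \lapms{\abar} f_{\text{in}}$, $w_{\text{out}} := \lapms{\abar} f_{\text{out}}$, so that $H(w,\varphi) = H(w_{\text{in}},\varphi) + H(w_{\text{out}},\varphi)$. For the near piece I apply \eqref{eq:Halphauvlpqest} with $q=\pbar$, $q_1=q_2=2\pbar$, together with the continuous embedding $L^\pbar\hookrightarrow L^{\pbar,2\pbar}$:
\[
\Vrac{H(w_{\text{in}},\varphi)}_{\pbar,B_{\Lambda r}} \leq \Vrac{H(w_{\text{in}},\varphi)}_{\pbar,\R^n} \aleq \Vrac{\laps{\abar} w_{\text{in}}}_{\pbar}\,\Vrac{\laps{\abar}\varphi}_{\pbar} = \Vrac{\eta_{\Lambda^3 r}\laps{\abar} w}_{\pbar}\,\Vrac{\laps{\abar}\varphi}_{\pbar},
\]
which reproduces the second summand in the target bound.

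The principal obstacle is the tail $H(w_{\text{out}},\varphi)$, for which the gain $\Lambda^{-\gamma}$ must be recovered. For $x\in B_{\Lambda r}$, Theorem~\ref{th:lowerorderalphaln} expresses $\Rz_i H(w_{\text{out}},\varphi)(x)$ as a finite sum of bilinear expressions $M_k\lapms{s_k-t_k}(F_k\, G_k)(x)$ with $F_k=N_k^{(1)}\lapms{t_k}|f_{\text{out}}|$ and $G_k=N_k^{(2)}\lap^{-\abar/2+s_k/2}|\laps{\abar}\varphi|$. Since $\supp f_{\text{out}}$ lies outside $B_{\Lambda^3 r/2}$ while $x\in B_{\Lambda r}$, the Riesz potential defining $F_k$ only sees kernel values of size $\aleq (\Lambda^3 r)^{-n+t_k}$ on $\supp f_{\text{out}}$, exactly as in Proposition~\ref{pr:stupidestumw}. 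Balancing these $r$- and $\Lambda$-powers against the natural scaling of $G_k$ (concentrated near $\supp\varphi=B_r$) and against the Hardy--Littlewood--Sobolev scaling of the outer $\lapms{s_k-t_k}$, each summand contributes a factor $\Lambda^{-\gamma_k}$ with $\gamma_k>0$; composing with the $L^\pbar$-boundedness of Riesz transforms to remove the $\Rz_i$ and using $\vrac{f_{\text{out}}}_{\pbar}\leq\vrac{\laps{\abar} w}_{\pbar}$, I obtain
\[
\Vrac{H(w_{\text{out}},\varphi)}_{\pbar,B_{\Lambda r}} \aleq \Lambda^{-\gamma}\,\Vrac{\laps{\abar} w}_{\pbar}\,\Vrac{\laps{\abar}\varphi}_{\pbar},
\]
with $\gamma := \min_k\gamma_k > 0$, which together with the near-piece bound proves the claim.

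The technical point requiring most care is verifying that each $\gamma_k$ is in fact strictly positive once the parameters $s_k\in(0,\abar)$, $t_k\in[0,s_k]$ from Theorem~\ref{th:lowerorderalphaln} and the H\"older-exponent losses needed to land in $L^\pbar$ are tracked simultaneously. The choice of the \emph{cubic} buffer $\Lambda^3 r$ (rather than $\Lambda r$) is made precisely so that this positivity holds uniformly in $k$, without any further restriction on $\abar$ or $\pbar$.
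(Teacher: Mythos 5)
Your overall architecture matches the paper's: split $\laps{\abar}w$ at scale $\Lambda^{3}r$, absorb the near piece via Proposition~\ref{pr:Hest} (your near-piece estimate is exactly the paper's bound for $H(w_{1},\varphi)$), and extract $\Lambda^{-\gamma}$ from the far piece via the lower-order expansion plus disjoint-support kernel bounds. However, the far-piece argument has a genuine gap. You argue that $F_{k}=\lapms{t_{k}}\abs{f_{\text{out}}}$ is pointwise small on $B_{\Lambda r}$ because the Riesz-potential kernel only sees values $\aleq (\Lambda^{3}r)^{-n+t_{k}}$ there. That is true, but the quantity you must bound is $M_{k}\lapms{s_{k}-t_{k}}(F_{k}G_{k})$ evaluated on $B_{\Lambda r}$ (and, after removing the leading $\Rz_{i}$ by duality, tested against $\lapms{s_{k}-t_{k}}\Rz_{i}\psi$), and the operators $M_{k}$, $\lapms{s_{k}-t_{k}}$, $\Rz_{i}$ are nonlocal whenever $s_{k}>t_{k}$ or a Riesz transform is present. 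They integrate $F_{k}(y)G_{k}(y)$ over all $y\in\R^{n}$, and for $y$ outside $B_{\Lambda^{2}r}$ the factor $F_{k}(y)$ is \emph{not} small. Your parenthetical remark that $G_{k}$ is ``concentrated near $\supp\varphi$'' is precisely the missing step: $\laps{\abar}\varphi$ is not compactly supported, so the decay of $G_{k}=\lap^{-\abar/2+s_{k}/2}\abs{\laps{\abar}\varphi}$ away from $B_{r}$ must itself be proved and quantified before it can compensate for the large values of $F_{k}$ far out.

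The paper closes exactly this hole with two further decompositions that your proof omits: it splits $\varphi=\varphi_{1}+\varphi_{2}$ at scale $\Lambda r$ (the term $H(w_{2},\varphi_{2})$ is then handled separately and yields $\Lambda^{-n}\Vrac{\laps{\abar}w}_{\pbar}\Vrac{\laps{\abar}\varphi}_{\pbar}$ via the disjoint-support bound $\Vrac{(1-\eta_{\Lambda r})\laps{\abar}\varphi}_{\pbar}\aleq\Lambda^{-n}\Vrac{\laps{\abar}\varphi}_{\pbar}$), and for $H(w_{2},\varphi_{1})$ it inserts $1=\eta_{\Lambda^{2}r}+(1-\eta_{\Lambda^{2}r})$ into the outer integral: on the near region one uses the smallness of $\eta_{\Lambda^{2}r}\lapms{t}\abs{(1-\eta_{\Lambda^{3}r})\laps{\abar}w}$ in $L^{n/(\abar-t)}$ (gain $\Lambda^{t-\abar}$), and on the far region one uses the smallness of $(1-\eta_{\Lambda^{2}r})\lapms{s-t}\psi$ coming from the compact support of the test function (gain $\Lambda^{-\abar}$). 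Without some version of this second case analysis, the claim that ``each summand contributes a factor $\Lambda^{-\gamma_{k}}$'' is unsupported; the cubic buffer $\Lambda^{3}r$ alone does not produce it.
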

\begin{proof}
Set
\[
 w = \lapma (\eta_{\Lambda^3 r} \laps{\abar} w) + \lapma ((1-\eta_{\Lambda^3 r})
\laps{\abar} w) =: w_1 + w_2,
\]
and
\[
 \varphi = \lapma (\eta_{\Lambda r}\laps{\abar} \varphi) + \lapma
((1-\eta_{\Lambda r}) \laps{\abar} \varphi) =: \varphi_1 + \varphi_2,
\]
Then
\[
 H(w,\varphi) = H(w_1,\varphi) + H(w_2,\varphi_2) + H(w_2,\varphi_1).
\]
We compute via Proposition~\ref{pr:Hest},
\[
 \Vrac{H(w_1,\varphi)}_{\pbar} \aleq \Vert \eta_{\Lambda^3r} \laps{\abar} w
\Vert_{\pbar}\  \Vert \laps{\abar} \varphi \Vert_{\pbar},
\]
\[
 \Vrac{H(w_2,\varphi_2)}_{\pbar} \aleq \Vert \laps{\abar} w \Vert_{\pbar}\ 
\Vrac {\brac{1-\eta_{\Lambda r}} \laps{\abar} \varphi}_{\pbar} \aleq
\Lambda^{-n}\ \Vert \laps{\abar} w \Vert_{\pbar}\  \Vrac{ \laps{\abar}
\varphi}_{\pbar}.
\]
In order to estimate $\Vert H(w_2,\varphi_1) \Vert_{\pbar}$ in the given set,
according to Lemma~\ref{la:lowerorderalphaln}, similar to the arguments in
\cite{Sfracenergy}, it suffices to estimate terms of the following form, for
some $\psi \in C_0^\infty(B_r)$, $\Vert \psi \Vert_{\pbar'} \leq 1$, and for $s
\in (0,\abar)$, $t \in [0,s]$ (in fact, there might appear additional
$0$-multipliers, but as they do not interfere with the argument, we ignore this
for the sake of readability)
\[
 \begin{ma}
 &&\int \lapms{s-t} \psi\ \lapms{s} \abs{(1-\eta_{\Lambda^3 r}) \laps{\abar} w}\
\lapms{\abar-t} \abs{\eta_{\Lambda r} \laps{\abar} \varphi} \\
&=&\int \lapms{s-t} \psi\ \eta_{\Lambda^2 r}\lapms{t} \abs{(1-\eta_{\Lambda^3
r}) \laps{\abar} w}\ \lapms{\abar-s} \abs{\eta_{\Lambda r} \laps{\abar} \varphi}
\\
&&+\int (1-\eta_{\Lambda^2 r})\lapms{s-t} \psi\ \lapms{t}
\abs{(1-\eta_{\Lambda^3 r}) \laps{\abar} w}\ \lapms{\abar-s} \abs{\eta_{\Lambda
r} \laps{\abar} \varphi} \\
&\aleq{}&
\Vrac{\eta_{\Lambda^2 r}\lapms{t} \abs{(1-\eta_{\Lambda^3 r}) \laps{\abar}
w}}_{\frac{n}{\abar-t}}\ \Vrac{\laps{\abar} \varphi}_{\pbar} \\
&&+\Vrac{(1-\eta_{\Lambda^2 r})\lapms{s-t} \psi}_{\frac{n}{n-s+t-\abar}}\
\Vrac{\laps{\abar} w}_{\pbar}\ \Vrac{\laps{\abar} \varphi}_{\pbar} \\
&\aleq{}&
\brac{\Lambda^{t-\abar}  +\Lambda^{-\abar}}\ \Vrac{\laps{\abar} w}_{\pbar}\
\Vrac{\laps{\abar} \varphi}_{\pbar}
\end{ma}
\]
Here, we have used several times the arguments for products of (non-local)
fractional operators with disjoint support, for the details of which we refer
to, e.g., the arguments of Proposition~\ref{pr:stupidestumw} or
\cite[Proposition 4.4]{Sfracenergy}.\\
Setting $\gamma := \min\{\abar-t,\abar,n\}$, we conclude.
\end{proof}

\subsection*{Conclusion for the tangential part}
By Proposition~\ref{pr:differenceguys}, Proposition~\ref{pr:Hwvpakest} and
Proposition~\ref{pr:HwvpBlrest}, we arrive at the following
\begin{proposition}[Estimate of (\ref{eq:wel:ess})]\label{pr:tangrhsest}
There is $\gamma = \gamma_{\abar.\pbar} > 0$ and a constant $C$ depending on the
choice of $B$, $\tilde{B}$, $\etab$, such that the following holds: For any
$\varepsilon > 0$ there exists $\Lambda > 0$, $R > 0$, such that for any $B_{r}
\subsubset B$, $r \in (0,R)$, $\varphi \in C_0^\infty(B_r)$, $\vrac{\laps{\abar}
\varphi}_{\pbar} \leq 1$, $\omega_{ij} = -\omega{ji}$, $\abs{\omega} \leq 1$,
\[
 \omega_{ij}\intl_{\R^n} \sabs{\laps{\abar} w}^{\pbar -2}\ w^j \laps{\abar} w^i\
\laps{\abar} \varphi
 \leq \varepsilon\ \vrac{ \laps{\abar} w }^{\pbar-1}_{\pbar, B_{\Lambda r}} + C\
r^\gamma + \varepsilon \sum_{k=1}^\infty 2^{-\gamma k}\ \Vert \laps{\abar} w
\Vert_{\pbar,B_{2^k \Lambda r}\backslash B_{2^{k-1} \Lambda r}}^{\pbar-1}.
\]
\end{proposition}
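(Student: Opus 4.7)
The plan is to apply equation \eqref{eq:wel} and observe that the quantity we must estimate is \emph{exactly} the LHS of \eqref{eq:wel}. Hence it equals the sum of the five RHS terms \eqref{eq:wel:diff1}--\eqref{eq:wel:ess}. The four ``subcritical'' difference terms $I,II,III,IV$ are handled in one stroke by Proposition~\ref{pr:differenceguys}, which (using the assumption $\vrac{\laps{\abar}\varphi}_\pbar\le 1$) yields a bound $\le C r^\gamma$ with $C$ depending only on $u,B,\tilde B,\etab$. This contributes the middle term $Cr^\gamma$ in the claim, and absorbs the factor $|\omega|\le 1$.

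For the critical term \eqref{eq:wel:ess}, the plan is to localize $\R^n$ into the ball $B_{\Lambda r}$ and the annuli $A_k := B_{2^k\Lambda r}\setminus B_{2^{k-1}\Lambda r}$, $k\ge 1$. On each region $E$, H\"older's inequality with exponents $\pbar' = \pbar/(\pbar-1)$ and $\pbar$ (using $(\pbar-1)\pbar' = \pbar$) gives
\[
\Big|\intl_E \sabs{\laps{\abar} w}^{\pbar -2}\ \laps{\abar} w^i\ H(w^j,\varphi)\Big|
\le \vrac{\laps{\abar}w}^{\pbar-1}_{\pbar,E}\ \vrac{H(w,\varphi)}_{\pbar,E}.
\]
On the central piece $E=B_{\Lambda r}$, Proposition~\ref{pr:HwvpBlrest} bounds the second factor by $C\bigl(\Lambda^{-\gamma}\vrac{\laps{\abar}w}_\pbar + \vrac{\eta_{\Lambda^3 r}\laps{\abar}w}_\pbar\bigr)$, again using $\vrac{\laps{\abar}\varphi}_\pbar\le 1$. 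We first pick $\Lambda$ so large that $C\Lambda^{-\gamma}\vrac{\laps{\abar}w}_\pbar \le \varepsilon/2$, and then invoke absolute continuity of the $L^\pbar$-integral of $\laps{\abar}w$ (whose ``thickening'' support $\{\eta_{\Lambda^3 r}\ne 0\}$ has measure $O((\Lambda^3 r)^n)\to 0$) to choose $R=R(\Lambda)>0$ making $C\vrac{\eta_{\Lambda^3 r}\laps{\abar}w}_\pbar \le \varepsilon/2$ whenever $r\in(0,R)$. This produces the first term $\varepsilon\vrac{\laps{\abar}w}_{\pbar,B_{\Lambda r}}^{\pbar-1}$ of the claim. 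On each annulus $A_k$, Proposition~\ref{pr:Hwvpakest} gives $\vrac{H(w,\varphi)}_{\pbar,A_k}\le C(2^k\Lambda)^{-\gamma}$; enlarging $\Lambda$ once more so that $C\Lambda^{-\gamma}\le \varepsilon$, summing over $k$ yields the final tail term $\varepsilon\sum_k 2^{-\gamma k}\vrac{\laps{\abar}w}^{\pbar-1}_{\pbar,A_k}$.

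The main obstacle is the \emph{order of quantifiers}: both Propositions~\ref{pr:HwvpBlrest} and~\ref{pr:Hwvpakest} contribute a factor $\Lambda^{-\gamma}$, which must be made small first, and only afterwards may $R$ be chosen (depending on the now-fixed $\Lambda$) to make the residual localized piece $\vrac{\eta_{\Lambda^3 r}\laps{\abar}w}_\pbar$ small via absolute continuity. Care is also required to ensure all implicit constants depend only on the data $u,B,\tilde B,\etab$, but this follows since each invoked proposition has constants of precisely that form and $|\omega|\le 1$ is uniform. Collecting the contributions from steps above gives the stated inequality with a single common $\gamma = \gamma_{\abar,\pbar}>0$ (the minimum of those produced by Propositions~\ref{pr:differenceguys}, \ref{pr:Hwvpakest}, \ref{pr:HwvpBlrest}).
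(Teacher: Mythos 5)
Your proposal is correct and is precisely the argument the paper intends: the paper gives no written proof of Proposition~\ref{pr:tangrhsest}, merely asserting that it follows from Propositions~\ref{pr:differenceguys}, \ref{pr:Hwvpakest} and \ref{pr:HwvpBlrest}, and your write-up (rewrite via \eqref{eq:wel}, absorb the four difference terms into $Cr^\gamma$, then H\"older with $(\pbar',\pbar)$ on $B_{\Lambda r}$ and on the dyadic annuli, choosing $\Lambda$ first and then $R$ by absolute continuity) is the natural and correct expansion of that one-line citation.
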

Now we can proceed by virtually the same arguments as in
\cite{DR1dSphere},\cite{SNHarmS10}: Firstly, Proposition~\ref{pr:tangrhsest}
finally implies
\begin{lemma}\label{la:sopartest}
There is $\gamma = \gamma_{\abar.\pbar} > 0$ and a constant $C$ depending on the
choice of $B$, $\tilde{B}$, $\etab$, such that the following holds: For any
$\varepsilon > 0$ there exists $\Lambda > 0$, $R > 0$, such that for any $B_{r}
\subsubset B$, $r \in (0,R)$, $\omega_{ij} = -\omega{ji}$, $\abs{\omega} \leq
1$,
\[
 \vrac{\sabs{\laps{\abar} w}^{\pbar -2}\ \omega_{ij}\ w^j \laps{\abar}
w^i}_{\pbar', B_{\Lambda^{-1}r}}
 \leq \varepsilon\ \vrac{ \laps{\abar} w }^{\pbar-1}_{\pbar, B_{\Lambda r}} + C\
r^\gamma + \varepsilon \sum_{k=1}^\infty 2^{-\gamma k}\ \Vert \laps{\abar} w
\Vert_{\pbar,B_{2^k \Lambda r}\backslash B_{2^{k-1} \Lambda r}}^{\pbar-1}.
\]
\end{lemma}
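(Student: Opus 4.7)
The passage from Proposition \ref{pr:tangrhsest} to Lemma \ref{la:sopartest} is a standard $L^{\pbar}$--$L^{\pbar'}$ duality extension, patterned on the analogous step in \cite{DR1dSphere}, \cite{SNHarmS10}. Writing
\[
F := \sabs{\laps{\abar}w}^{\pbar-2}\,\omega_{ij}\,w^j\,\laps{\abar}w^i,
\]
the left-hand side of the lemma equals $\vrac{F}_{\pbar',B_{\Lambda^{-1}r}}$, which by duality is
\[
\vrac{F}_{\pbar',B_{\Lambda^{-1}r}} = \sup \int_{\R^n} F g,
\]
with supremum over $g\in L^{\pbar}(\R^n)$ satisfying $\supp g\subset B_{\Lambda^{-1}r}$ and $\vrac{g}_{\pbar}\le 1$. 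The plan is to reduce each such $\int Fg$ to an integral of the form $\int F\laps{\abar}\varphi$ already controlled by Proposition \ref{pr:tangrhsest}.

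First I would set $\tilde\varphi := \lapms{\abar}g$, so that $\laps{\abar}\tilde\varphi = g$ on all of $\R^n$ with $\vrac{\laps{\abar}\tilde\varphi}_{\pbar}=\vrac{g}_{\pbar}\le 1$, pick a cutoff $\eta\in C_0^\infty(B_r,[0,1])$ equal to $1$ on $B_{r/2}$ (so, since $\Lambda\ge 2$, also $\eta\equiv 1$ on $\supp g$), and define
\[
\varphi := \eta\,\tilde\varphi \in C_0^\infty(B_r).
\]
The three-term commutator identity yields
\[
\laps{\abar}\varphi = \eta g + \tilde\varphi\,\laps{\abar}\eta + H(\eta,\tilde\varphi) = g + \tilde\varphi\,\laps{\abar}\eta + H(\eta,\tilde\varphi),
\]
using $\eta g = g$. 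Integrating against $F$ then gives the basic decomposition
\[
\int F g = \int F\,\laps{\abar}\varphi \;-\; \int F\,\tilde\varphi\,\laps{\abar}\eta \;-\; \int F\,H(\eta,\tilde\varphi).
\]

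To invoke Proposition \ref{pr:tangrhsest} on the main term, I have to verify $\vrac{\laps{\abar}\varphi}_{\pbar}\aleq 1$. The contribution $\eta g$ is bounded trivially; the disjoint-support estimate of Proposition \ref{pr:stupidestumw} controls $\tilde\varphi\,\laps{\abar}\eta$ (the two factors are essentially localized away from each other); and the Lorentz-space bound \eqref{eq:Halphauvlpqest} of Theorem \ref{th:lowerorderalphaln} controls $H(\eta,\tilde\varphi)$. Proposition \ref{pr:tangrhsest} then estimates $\int F\laps{\abar}\varphi$ by precisely the right-hand side of the lemma (up to replacing $\varepsilon$ by $C\varepsilon$, which is harmless).

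The main technical obstacle is recasting the two error integrals $\int F\,\tilde\varphi\,\laps{\abar}\eta$ and $\int F\,H(\eta,\tilde\varphi)$ in the specific annular geometric-sum form of the statement, rather than merely bounding them by the global $L^{\pbar'}$-norm of $F$ (which would be circular). For this I would decompose both $\laps{\abar}\eta$ and $H(\eta,\tilde\varphi)$ along the dyadic annuli $A_k := B_{2^k\Lambda r}\setminus B_{2^{k-1}\Lambda r}$. Since $\supp\eta\subset B_r$ and $\supp g\subset B_{\Lambda^{-1}r}$, pointwise estimates exactly as in Propositions \ref{pr:Hwvpakest} and \ref{pr:HwvpBlrest} show that both factors decay polynomially at rate $(2^k\Lambda)^{-\gamma}$ outside $B_r$. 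Applying H\"older's inequality annulus by annulus against the $|\laps{\abar}w|^{\pbar-1}\in L^{\pbar'}$ factor inside $F$ produces the required sum $\varepsilon\sum_k 2^{-\gamma k}\vrac{\laps{\abar}w}_{\pbar,A_k}^{\pbar-1}$, while the residual low-order scaling of $\laps{\abar}\eta$ on $B_{\Lambda r}$ accounts for the $Cr^{\gamma}$ term. Taking the supremum over $g$ then gives the lemma.
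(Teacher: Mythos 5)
Your duality argument (writing the $L^{\pbar'}(B_{\Lambda^{-1}r})$-norm as $\sup_g\int Fg$, setting $\varphi=\eta\,\lapms{\abar}g$, splitting $\laps{\abar}(\eta\,\lapms{\abar}g)=g+\lapms{\abar}g\,\laps{\abar}\eta+H(\eta,\lapms{\abar}g)$, feeding the main term into Proposition~\ref{pr:tangrhsest} and decomposing the two error terms over the dyadic annuli) is precisely the localization-of-the-dual-norm argument of \cite{DR1dSphere},\cite{SNHarmS10} that the paper invokes without writing out, so the proposal is correct and matches the intended proof. Two small corrections: the factors $\lapms{\abar}g$ and $\laps{\abar}\eta$ are \emph{not} disjointly supported, so the required bound $\Vrac{\lapms{\abar}g\,\laps{\abar}\eta}_{\pbar}\aleq\Lambda^{-\sigma}$ comes from the scale gap between $\supp g\subset B_{\Lambda^{-1}r}$ and the region $B_r\setminus B_{r/2}$ where $\eta$ varies (a kernel estimate in the spirit of, but not literally, Proposition~\ref{pr:stupidestumw}), and this smallness is what feeds the $\varepsilon\,\vrac{\laps{\abar}w}_{\pbar,B_{\Lambda r}}^{\pbar-1}$ term, while the $C\,r^{\gamma}$ in the conclusion is inherited from Proposition~\ref{pr:tangrhsest} rather than produced by $\laps{\abar}\eta$ on $B_{\Lambda r}$.
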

\subsection*{Putting tangential and normal part together}
Together, Lemma~\ref{la:sopartest} and Lemma~\ref{la:orthest} imply
\begin{lemma}\label{la:wallest}
There is $\gamma = \gamma_{\abar.\pbar} > 0$ and a constant $C$ depending on the
choice of $B$, $\tilde{B}$, $\etab$, such that the following holds: For any
$\varepsilon > 0$ there exists $\Lambda > 0$, $R > 0$, such that for any $B_{r}
\subsubset B$, $r \in (0,R)$,
\[
 \vrac{\laps{\abar} w^i}_{\pbar, B_{\Lambda^{-1}r}}^{\pbar-1}
 \leq \varepsilon\ \vrac{ \laps{\abar} w }^{\pbar-1}_{\pbar, B_{\Lambda r}} + C\
r^\gamma + \varepsilon \sum_{k=1}^\infty 2^{-\gamma k}\ \Vert \laps{\abar} w
\Vert_{\pbar,B_{2^k \Lambda r}\backslash B_{2^{k-1} \Lambda r}}^{\pbar-1}.
\]
\end{lemma}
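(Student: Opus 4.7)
The plan is to combine Lemma~\ref{la:orthest} (normal direction) and Lemma~\ref{la:sopartest} (tangential directions) by means of the pointwise orthogonal decomposition induced by $|w|=1$. Since $w=\etab u$ and $\etab\equiv 1$ on $\tilde{B}$, we have $|w(x)|=1$ a.e.\ on $\tilde{B}$, hence in particular on every ball we will use. Writing $V:=\laps{\abar}w$ and letting $\{\omega^{(a,b)}\}_{1\le a<b\le N}$ denote the standard basis of skew-symmetric $N\times N$-matrices, $\omega^{(a,b)}_{ij}=\delta_{ia}\delta_{jb}-\delta_{ib}\delta_{ja}$, a direct computation using $|w|=1$ yields the pointwise identity
\[
 |V|^{2}\ =\ (w\cdot V)^{2}+\sum_{a<b}\bigl(\omega^{(a,b)}_{ij}w^{j}V^{i}\bigr)^{2}\qquad\text{a.e.\ on }\tilde{B},
\]
which is the concrete version of Proposition~\ref{pr:orthogdecomp} needed here.

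Multiplying by $|V|^{\pbar-2}$, integrating over $B_{\Lambda^{-1}r}$, and estimating each resulting integral by H\"older in the pair $(\pbar',\pbar)$ gives, after a Young-type absorption of the factor $\vrac{V}_{\pbar,B_{\Lambda^{-1}r}}^{\pbar}$ on the left,
\[
 \vrac{V}_{\pbar,B_{\Lambda^{-1}r}}^{\pbar}\ \le\ C\vrac{w\cdot V}_{\pbar,B_{\Lambda^{-1}r}}^{\pbar}+C\sum_{a<b}\vrac{|V|^{\pbar-2}\omega^{(a,b)}_{ij}w^{j}V^{i}}_{\pbar',B_{\Lambda^{-1}r}}^{\pbar'}.
\]
The Young step works uniformly in $\pbar\in(1,\infty)$: for $\pbar>2$ one uses the further split $\vrac{|V|^{\pbar-2}(w\cdot V)}_{\pbar'}\le\vrac{V}_{\pbar}^{\pbar-2}\vrac{w\cdot V}_{\pbar}$ followed by Young with exponents $(\pbar/(\pbar-2),\pbar/2)$, while for $\pbar\le 2$ the pointwise bound $|V|^{\pbar-2}|w\cdot V|^{2-\pbar}\le 1$ makes the same conclusion immediate. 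For the tangential integrands one similarly writes $|V|^{\pbar-2}(\omega^{(a,b)}_{ij}w^{j}V^{i})^{2}=(|V|^{\pbar-2}\omega^{(a,b)}_{ij}w^{j}V^{i})\cdot(\omega^{(a,b)}_{ij}w^{j}V^{i})$, uses the pointwise bound $|\omega^{(a,b)}_{ij}w^{j}V^{i}|\le|V|$, and applies Young with $(\pbar',\pbar)$.

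Into this display I now insert the two sub-lemmas. Lemma~\ref{la:sopartest} already bounds each $\vrac{|V|^{\pbar-2}\omega^{(a,b)}_{ij}w^{j}V^{i}}_{\pbar',B_{\Lambda^{-1}r}}$; raising that bound to the $\pbar'$-th power and using $(\pbar-1)\pbar'=\pbar$ turns the $\vrac{V}^{\pbar-1}$ on its right-hand side into $\vrac{V}^{\pbar}$. Lemma~\ref{la:orthest} applied at the shifted radius $\Lambda^{-1}r$ controls $\vrac{w\cdot V}_{\pbar,B_{\Lambda^{-1}r}}$, which I then raise to the $\pbar$-th power. In both insertions the weighted annular sum is handled uniformly via the elementary inequality
\[
 \Bigl(\sum_{k}2^{-\gamma k}a_{k}\Bigr)^{s}\ \le\ C_{s}\sum_{k}2^{-\gamma' k}a_{k}^{s}\qquad(s>0),
\]
obtained from H\"older for $s\ge 1$ and from subadditivity for $s\le 1$; a mild reindexing (corresponding to the shift from radius $\Lambda^{-1}r$ back to radius $\Lambda r$) only affects the exponent $\gamma$. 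Choosing the small parameters inside the two sub-lemmas sufficiently small and $\Lambda$ large enough to dominate their own $\Lambda$-parameters, one obtains
\[
 \vrac{V}_{\pbar,B_{\Lambda^{-1}r}}^{\pbar}\ \le\ \varepsilon^{\pbar}\vrac{V}_{\pbar,B_{\Lambda r}}^{\pbar}+Cr^{\gamma_{0}}+\varepsilon^{\pbar}\sum_{k=1}^{\infty}2^{-\gamma_{0}k}\vrac{V}_{\pbar,B_{2^{k}\Lambda r}\setminus B_{2^{k-1}\Lambda r}}^{\pbar}.
\]
Taking the $(\pbar-1)/\pbar$-th root of both sides, which is subadditive since that exponent lies in $(0,1]$, and applying the above weighted summation inequality once more, brings this into the exact form claimed in Lemma~\ref{la:wallest}.

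The main obstacle is the bookkeeping mismatch between the $L^{\pbar}$-norms of $\omega^{(a,b)}_{ij}w^{j}V^{i}$ that the orthogonal decomposition naturally produces and the weighted $L^{\pbar'}$-norms of $|V|^{\pbar-2}\omega^{(a,b)}_{ij}w^{j}V^{i}$ that Lemma~\ref{la:sopartest} actually supplies. The Young-absorption on a slightly bigger ball is the key manoeuvre reconciling these two formulations; the remaining work is a routine but slightly tedious case split between $\pbar\le 2$ and $\pbar>2$, together with the careful bookkeeping of the nested balls $B_{\Lambda^{-1}r}\subset B_{r}\subset B_{\Lambda r}\subset\tilde{B}$ which ensures that $|w|=1$ is in force on every relevant annular region.
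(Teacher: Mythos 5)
Your proof is correct, and it reaches the same two sub-lemmas (Lemma~\ref{la:orthest} for the normal part, Lemma~\ref{la:sopartest} for the tangential parts) that the paper combines; the difference lies entirely in how you reconcile the unweighted norm $\vrac{\laps{\abar}w}_{\pbar}$ with the weighted quantities $\vrac{\sabs{\laps{\abar}w}^{\pbar-2}\omega_{ij}w^j\laps{\abar}w^i}_{\pbar'}$ that Lemma~\ref{la:sopartest} actually delivers. The paper does this pointwise: for $\pbar\geq 2$ it dominates $\sabs{\omega_{ij}w^j\laps{\abar}w^i}^{\pbar-2}$ by $\sabs{\laps{\abar}w}^{\pbar-2}$ directly, and for $\pbar\in(1,2)$ it introduces the level sets $A_{\omega,\theta}$ and invokes the pigeonhole part of Proposition~\ref{pr:orthogdecomp} (if one tangential projection is small relative to $\abs{\laps{\abar}w}$, another projection or the normal part must be large). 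You instead start from the exact Lagrange identity $\abs{V}^2=(w\cdot V)^2+\sum_{a<b}(\omega^{(a,b)}_{ij}w^jV^i)^2$, which is a sharper form of the first claim of Proposition~\ref{pr:orthogdecomp}, multiply by the weight $\abs{V}^{\pbar-2}$, and absorb via H\"older and Young on the fixed ball; this avoids the level-set argument and the second claim of Proposition~\ref{pr:orthogdecomp} altogether, at the cost of working with $\pbar$-th powers and having to take a $(\pbar-1)/\pbar$-th root and re-sum the weighted annular series at the end (which you handle correctly with the subadditivity/H\"older inequality for $\brac{\sum_k 2^{-\gamma k}a_k}^s$). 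Both routes require the same mild reindexing of annuli when the sub-lemmas are applied at shifted radii, and your observation that the absorption is legitimate because $\laps{\abar}w\in L^{\pbar}(\R^n)$ makes the norms finite is the right justification. In short: same skeleton, a genuinely different and arguably cleaner implementation of the decomposition step, trading the paper's case split on $\pbar$ for an integral absorption argument that is uniform in $\pbar\in(1,\infty)$.
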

\begin{proof}
Note that, if $\pbar \geq 2$,
\[
\vrac{\omega_{ij}\ w^j \laps{\abar} w^i}^{\pbar -1}_{\pbar, B_{\Lambda^{-1}r}}
= \vrac{\sabs{\omega_{ij}\ w^j \laps{\abar} w^i}^{\pbar -2}\ \omega_{ij}\ w^j
\laps{\abar} w^i}_{\pbar', B_{\Lambda^{-1}r}}
 \aleq \vrac{\sabs{\laps{\abar} w}^{\pbar -2}\ \omega_{ij}\ w^j \laps{\abar}
w^i}_{\pbar', B_{\Lambda^{-1}r}}
\]
On the other hand, if $\pbar \in (1,2)$, we set for $\theta > 0$ from
Proposition~\ref{pr:orthogdecomp},
\begin{equation}\label{eq:Awtheta}
 A_{\omega,\theta} := \left \{ \sabs{w^i \omega_{ij} \lapa w^j} \geq \theta\
\sabs{\laps{\abar} w} \right \}.
\end{equation}
Said Proposition~\ref{pr:orthogdecomp} then implies that for any $\omega \in
\Omega$,
\[
\begin{ma}
 &&\vrac{\omega_{ij}\ w^j \laps{\abar} w^i}^{\pbar -1}_{\pbar,
B_{\Lambda^{-1}r}} \\
&\aeq&  \vrac{\omega_{ij}\ w^j \laps{\abar} w^i}^{\pbar -1}_{\pbar,
B_{\Lambda^{-1}r}\cap A_{\omega,\theta}}  + \vrac{\omega_{ij}\ w^j \laps{\abar}
w^i}^{\pbar -1}_{\pbar, B_{\Lambda^{-1}r}\cap A_{\omega,\theta}^c}\\

&\overset{\sref{P}{pr:orthogdecomp}}{\aleq}&  \vrac{\omega_{ij}\ w^j
\laps{\abar} w^i}^{\pbar -1}_{\pbar, B_{\Lambda^{-1}r}\cap A_{\omega,\theta}} + 
\sum_{\tilde{\omega} \in \Omega} \vrac{\tilde{\omega}_{ij}\ w^j \laps{\abar}
w^i}^{\pbar -1}_{\pbar, B_{\Lambda^{-1}r}\cap A_{\tilde{\omega},\theta}} +
\vrac{w^i \laps{\abar} w^i}^{\pbar -1}_{\pbar, B_{\Lambda^{-1}r}}\\
&\overset{\eqref{eq:Awtheta}}{\aleq}&  C_\theta\ \sum_{\tilde{\omega} \in
\Omega} \vrac{\sabs{\laps{\abar} w}^{\pbar -2}\ \tilde{\omega}_{ij}\ w^j
\laps{\abar} w^i}_{\pbar', B_{\Lambda^{-1}r}}  + \vrac{w^i \laps{\abar}
w^i}^{\pbar -1}_{\pbar, B_{\Lambda^{-1}r}}.
\end{ma}
\]
Applying again Proposition~\ref{pr:orthogdecomp}, we arrive for any $\pbar > 1$
at
\[
 \vrac{\laps{\abar} w}^{\pbar-1}_{\pbar, B_{\Lambda^{-1}r}} \aleq \sum_{\omega
\in \Omega} \vrac{\sabs{\laps{\abar} w}^{\pbar -2}\ \omega_{ij}\ w^j
\laps{\abar} w^i}_{\pbar', B_{\Lambda^{-1}r}}  + \vrac{w^i \laps{\abar}
w^i}^{\pbar -1}_{\pbar, B_{\Lambda^{-1}r}}.
\]
We conclude by Lemma~\ref{la:sopartest} and Lemma~\ref{la:orthest}.
\end{proof}

The proof of Theorem \ref{th:main2} follows by an iteration argument and an
application of Dirichlet's growth theorem (cf.
\cite{DR1dSphere},\cite{Sfracenergy}).
\newpage
\renewcommand{\thesection}{A}
\renewcommand{\thesubsection}{A.\arabic{subsection}}
\section{Lower Order Arguments: Proof of Theorem \ref{th:lowerorderalphaln}}
Let $\alpha \in (0,n)$. In this section, we treat estimates on the bilinear
operator
\[
 H_\alpha(u,v) := \laps{\alpha} (uv) - u \laps{\alpha} v - v \laps{\alpha} u,
\]
which behaves, in some sense, like a product of lower operators (take, for
example, the classic case $\alpha = 2$). The estimates are similar to the ones
obtained and used in \cite{DR1dSphere},\cite{SNHarmS10}, \cite{DR1dMan},
\cite{DndMan}, \cite{Sfracenergy}, and here, we will adopt the general strategy
of the latter article \cite{Sfracenergy}. The argument relies on the estimates
on singular kernels of Proposition~\ref{pr:multest}. These kernels (the
geometric-space analogue to the Fourier-multiplier estimates in
\cite{SNHarmS10}) appear because of the following representation of $\lapa$,
$\lapma$: For $f \in C_0^\infty(\R^n)$,
\[
 \lapa f(x) = c_{\alpha,n}\ \intl_{\R^n} \frac{f(x)-f(y)}{\abs{x-y}^{n+\alpha}}\
dx,  \quad \mbox{if $\alpha \in (0,1)$}.
\]
The inverse operator of $\lapa$ is the so-called Riesz potential,
\begin{equation}\label{eq:lo:rieszpotentialdef}
 \lapma f(x) = c_{\alpha,n}\ \intl_{\R^n} f(\eta)\ \abs{x-\eta}^{-n+\alpha}\ d\eta, 
\quad \mbox{if $\alpha \in (0,n)$}.
\end{equation}
The essential argument appears in the case $\alpha \in (0,1)$. As mentioned above, they are similar to the ones in \cite{Sfracenergy}, though there, for convenience, only special cases for $\alpha$ were considered.
\subsection{The case \texorpdfstring{$\alpha \in (0,1)$}{alpha in (0,1)}}
With the representation for $\laps{\alpha}$, one has for $\alpha \in (0,1)$,
\[
\begin{ma}
\laps{\alpha} (u\ v)(x)
&=&c_{\alpha,n} \int \frac{u(x)\ v(x) -
u(y)\ v(y)}{\abs{x-y}^{n+\alpha}}\ dy\\
&=&c_{\alpha,n} \int \frac{\brac{u(x)-u(y)}\
v(x) + u(y)\ \brac{v(x)-v(y)}}{\abs{x-y}^{n+\alpha}}\ dy\\
&=&\lapa u(x)\ v(x) + c_{\alpha,n} \int \frac{\brac{u(y)-u(x)}\ \brac{v(x)-v(y)}}{\abs{x-y}^{n+\alpha}}\ dy + \lapa b(x)\ u(x),
\end{ma}
\]
that is
\[
H_\alpha (u,v) =  c_{\alpha,n} \int \frac{\brac{u(y)-u(x)}\ \brac{v(x)-v(y)}}{\abs{x-y}^{n+\alpha}}\ dy.
\]
Replacing now $u := \lapms{\alpha} a$, $v := \lapms{\alpha} b$, this is equivalent to
\[
H_\alpha (u,v) =  \tilde{c}_{\alpha,n} \int \int \int \frac{
(\abs{y-\eta}^{-n+\alpha}-\abs{x-\eta}^{-n+\alpha})\ (\abs{y-\xi}^{-n+\alpha}-\abs{x-\xi}^{-n+\alpha})}{\abs{x-y}^{n+\alpha}}\ a(\eta)\ b(\xi)\ d\xi\ d\eta\ dy.
\]
Thus, the main point of our argument is to replace the differences of functions in the
definition of $H$ by differences on the kernels of the respective operators. In the above representation of $H_\alpha(u,v)$ it is useful to observe: whenever $\abs{x-y}^{-n-\alpha}$ becomes singular, both, the difference of terms with $\eta$ and the terms with $\xi$ tend to zero as well, thus ``absorbing'' the singularity up to a certain point. More precisely, these kernels can be estimated as in the following proposition. 
\begin{proposition}[Multiplier estimate]\label{pr:multest}
Let $\alpha \in [0,1]$ and $\varepsilon \in (0,1)$. Then for almost every
$x,y,\eta,\xi \in \R^n$, and a uniform constant $C$
\[
\begin{ma}
&&{\abs{\abs{\eta - y}^{-n+\alpha}-\abs{\eta - x}^{-n+\alpha}}\
\abs{\abs{\xi - y}^{-n+\alpha}-\abs{\xi -
x}^{-n+\alpha}}}\\
&\leq& C\ {\abs{y-\eta}^{-n+\alpha-\varepsilon}\
\brac{\abs{x-\xi}^{-n+\alpha-\varepsilon} +
\abs{y-\xi}^{-n+\alpha-\varepsilon}}}{\abs{x-y}^{2\varepsilon}}\\
&&+ C\ {\brac{\abs{x-\eta}^{-n+\alpha-\varepsilon} +
\abs{y-\eta}^{-n+\alpha-\varepsilon}}\
\abs{y-\xi}^{-n+\alpha-\varepsilon}}{\abs{x-y}^{2\varepsilon}}\\
&&+ C\ {\abs{x-\eta}^{-n+\alpha}
\abs{x-\xi}^{-n+\alpha}}\ \chi_{\abs{x-y} > 2
\abs{x-\xi}}\ \chi_{\abs{x-y} > 2 \abs{x-\eta}}.
\end{ma}
\]
\end{proposition}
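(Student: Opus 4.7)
The plan is to reduce the four-variable inequality to two one-factor bounds and then combine them case by case. Set
\[
 D_\eta := \abs{\abs{\eta-y}^{-n+\alpha}-\abs{\eta-x}^{-n+\alpha}}, \qquad D_\xi := \abs{\abs{\xi-y}^{-n+\alpha}-\abs{\xi-x}^{-n+\alpha}}.
\]
For each factor I would establish a dichotomy governed by the ratio $\abs{x-y}/\abs{x-\eta}$ (resp.\ $\abs{x-y}/\abs{x-\xi}$), so that the split aligns exactly with the indicator functions appearing in the target inequality. In the \emph{near regime} $\abs{x-y}\leq 2\abs{x-\eta}$, I would show
\[
 D_\eta \leq C\,\abs{x-y}^{\varepsilon}\brac{\abs{x-\eta}^{-n+\alpha-\varepsilon}+\abs{y-\eta}^{-n+\alpha-\varepsilon}}
\]
by interpolating the mean value estimate for $t\mapsto\abs{t-\eta}^{-n+\alpha}$ (safe when $\abs{x-y}\ll \abs{x-\eta}$, since $\abs{t-\eta}$ does not vanish along $[x,y]$) against the trivial triangle bound $D_\eta\leq\abs{x-\eta}^{-n+\alpha}+\abs{y-\eta}^{-n+\alpha}$ (which on the overlap $\abs{x-\eta}/3\leq\abs{x-y}\leq 2\abs{x-\eta}$ is already as strong as the interpolated bound). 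In the \emph{far regime} $\abs{x-y}>2\abs{x-\eta}$, the chain $\abs{y-\eta}\geq\abs{x-y}-\abs{x-\eta}\geq\abs{x-\eta}$ reduces the triangle bound to $D_\eta\leq C\,\abs{x-\eta}^{-n+\alpha}$. Analogous bounds hold verbatim for $D_\xi$.

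Two exponent-juggling observations then let me re-pack the product $D_\eta D_\xi$ into the three pieces on the right-hand side. First, in the near regime the constraint $\abs{y-\eta}\leq 3\abs{x-\eta}$ together with the negativity of the exponent $-n+\alpha-\varepsilon$ yields $\abs{x-\eta}^{-n+\alpha-\varepsilon}\leq C\,\abs{y-\eta}^{-n+\alpha-\varepsilon}$. Second, in the far regime $\abs{x-\eta}<\abs{x-y}/2$ converts the bare factor into $\abs{x-\eta}^{-n+\alpha}=\abs{x-\eta}^{-n+\alpha-\varepsilon}\,\abs{x-\eta}^{\varepsilon}\leq\abs{x-\eta}^{-n+\alpha-\varepsilon}\,\abs{x-y}^{\varepsilon}$. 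Multiplying the single-factor bounds and distinguishing the four geometric cases: the near/near case collapses to $\abs{x-y}^{2\varepsilon}\,\abs{y-\eta}^{-n+\alpha-\varepsilon}\,\abs{y-\xi}^{-n+\alpha-\varepsilon}$ (absorbed by line~1); the near/far case yields $\abs{x-y}^{2\varepsilon}\,\abs{y-\eta}^{-n+\alpha-\varepsilon}\,\abs{x-\xi}^{-n+\alpha-\varepsilon}$ (also in line~1); the far/near case yields $\abs{x-y}^{2\varepsilon}\,\abs{x-\eta}^{-n+\alpha-\varepsilon}\,\abs{y-\xi}^{-n+\alpha-\varepsilon}$ (in line~2); and the far/far case is precisely $\abs{x-\eta}^{-n+\alpha}\,\abs{x-\xi}^{-n+\alpha}$ with both indicators active, i.e.\ line~3.

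The main obstacle is the interpolated near-regime bound itself: the direct mean value estimate $\abs{x-y}\,\abs{x-\eta}^{-n+\alpha-1}$ is legitimate only on a strict sub-range where the radial kernel stays bounded along the whole segment $[x,y]$, and on the remaining overlap one must dovetail it with the triangle bound and check that both sub-regimes produce the same final form, uniformly in $\varepsilon \in (0,1)$. Once this one-factor statement is nailed down, the rest of the proof is routine book-keeping: expanding the product, applying the two exponent rules above, and tracking which of $\abs{x-\cdot}$ and $\abs{y-\cdot}$ dominates in each geometric case.
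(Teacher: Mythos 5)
Your proposal is correct and follows essentially the same route as the paper: a per-factor case split comparing $\abs{x-y}$ with $\abs{x-\eta}$ (resp.\ $\abs{x-\xi}$), the mean value theorem where the kernel is nonsingular along $[x,y]$, the triangle bound elsewhere, trading length factors for powers of $\abs{x-y}^{\varepsilon}$ via the regime constraints, and the far/far case producing exactly the cut-off term. Your two-way near/far split with an internal dichotomy in the near regime is just a repackaging of the paper's three-way decomposition $\chi_1,\chi_2,\chi_3$.
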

In particular, multiplying this estimate with the hypersingular (i.e. not locally integrable) kernel $\abs{x-y}^{-n-\alpha}$ (the kernel of the differentiation $\laps{\alpha}$), choosing $\varepsilon$ such that $1 > 2\varepsilon > \alpha > \varepsilon > 0$, the first two terms on the right-hand side consists of products of locally integrable kernels, or, more precisely, kernels of the Riesz potentials $\lapms{\alpha - \varepsilon}$ and $\lapms{2\varepsilon -\alpha}$. In the last term, we then have twice kernels of $\lapms{\alpha}$, and the kernel $\abs{x-y}^{-n-\alpha}\chi_{\abs{x-y} > 2
\abs{x-\xi}}\ \chi_{\abs{x-y} > 2 \abs{x-\eta}}$ where the singularity $x=y$ is somewhat cut away.\\
As we will see in Proposition \ref{pr:lowerorderalphal1}, this enables us, to show that the operators $\lapa$ in the definition of $H_\alpha(\cdot,\cdot)$ ``distribute their differentiation'' on both entry-functions.
  
\begin{proof}[Proof of Proposition \ref{pr:multest}]
Observe that in the following argument, since $\alpha \in [0,1]$, all the
constants can be taken independently from $\alpha$.
We set
\[ 
 k(x,y,\eta) := \abs{\abs{\eta - y}^{-n+\alpha}-\abs{\eta - x}^{-n+\alpha}}.
\]
Decompose the space $(x,y,\eta) \in \R^{3n}$ into several subspaces depending on
the relations of $\abs{y-\eta}$, $\abs{x-y}$, $\abs{x-\eta}$:
\[
 1 \leq \chi_1(x,y,\eta) + \chi_2(x,y,\eta) + \chi_3(x,y,\eta) \quad \mbox{for
$x,y,\eta \in \R^n$},
\]
where
\[
 \chi_1 := \chi_{\abs{x-y} \leq  2\abs{y-\eta}}\ \chi_{\abs{x-y} \leq 2
\abs{x-\eta}},
\]
\[
 \chi_2 := \chi_{\abs{x-y} \leq  2\abs{y-\eta}}\ \chi_{\abs{x-y} > 2
\abs{x-\eta}},
\]
\[
 \chi_3 := \chi_{\abs{x-y} >  2\abs{y-\eta}}\ \chi_{\abs{x-y} \leq 2
\abs{x-\eta}},
\]
and functions of the form $\chi_{f(x,y,\eta) < 0}$ denote the usual
characteristic functions of the set $\{(x,y,\eta) \in \R^{3N}:\ f(x,y,\eta) <
0\}$. Note that with a uniform constant
\begin{equation}\label{eq:lo:ymxapxmxi}
 \abs{y-\eta} \chi_1 \approx \abs{x-\eta} \chi_1.
\end{equation}
Then, by the mean value theorem, for the details in this context cf.
\cite[Proposition 3.3]{Sfracenergy}, for any $\varepsilon \in (0,1)$
\[
 k(x,y,\eta) \chi_1 \aleq \abs{x-\eta}^{-n+\alpha -1} \abs{x-y}\chi_1 \aleq
\abs{x-\eta}^{-n+\alpha -\varepsilon} \abs{x-y}^\varepsilon\ \chi_1 \overset{\eqref{eq:lo:ymxapxmxi}}{\approx}
\abs{y-\eta}^{-n+\alpha -\varepsilon} \abs{x-y}^\varepsilon\ \chi_1.
\]
Moreover, for any $\varepsilon > 0$,
\[
 k(x,y,\eta) \chi_2 \aleq \abs{x-\eta}^{-n+\alpha} \chi_2 \aleq
\abs{x-\eta}^{-n+\alpha-\varepsilon}\ \abs{x-y}^{\varepsilon},
\]
and also for any $\varepsilon > 0$,
\[
 k(x,y,\eta) \chi_3 \aleq \abs{y-\eta}^{-n+\alpha} \chi_3 \aleq
\abs{y-\eta}^{-n+\alpha-\varepsilon}\ \abs{x-y}^{\varepsilon}.
\]
In order to estimate the product $k(x,y,\xi)k(x,y,\eta)$ we have to check the claim for all cases $(i,j)$, $i,j \in \{1,2,3\}$, where we say
\[
 \mbox{case }(i,j) \Leftrightarrow (x,y,\eta,\xi) \in \R^{4n} \mbox{ such that }\chi_i(x,y,\eta)\chi_j(x,y,\xi) =1.
\]
Lets denote
\[
\begin{ma}
\mbox{Type I-estimate} &:=& {\abs{y-\eta}^{-n+\alpha-\varepsilon}\
\abs{x-\xi}^{-n+\alpha-\varepsilon}} {\abs{x-y}^{2\varepsilon}}\\
\mbox{Type II-estimate} &:=& {\abs{y-\eta}^{-n+\alpha-\varepsilon}\
\abs{y-\xi}^{-n+\alpha-\varepsilon}} {\abs{x-y}^{2\varepsilon}}\\
\mbox{Type III-estimate} &:=& \abs{x-\eta}^{-n+\alpha-\varepsilon}\
\abs{y-\xi}^{-n+\alpha-\varepsilon}{\abs{x-y}^{2\varepsilon}}\\
\mbox{Type IV-estimate} &:=& {\abs{x-\eta}^{-n+\alpha}
\abs{x-\xi}^{-n+\alpha}}\ \chi_{\abs{x-y} > 2
\abs{x-\xi}}\ \chi_{\abs{x-y} > 2 \abs{x-\eta}}.
\end{ma}
\]
One checks, that each of these types have to appear. Note, the only case where we choose the Type IV-estimate is $(2,2)$.
\end{proof}
As a consequence of Proposition \ref{pr:multest}, we obtain the following estimate for $\alpha \in (0,1)$:
\begin{proposition}\label{pr:lowerorderalphal1}
Let $u =\lapms{\alpha} \laps{\alpha} u$, $v =\lapms{\alpha} \laps{\alpha} v$.
Then for $\alpha \in (0,1)$  and for  $\varepsilon \in (0,1)$ satisfying
\[
 \alpha < 2\varepsilon < \min\{2\alpha,n+\alpha\},
\]
there exists $C\equiv C_\alpha > 0$ such that
\[
\begin{ma}
 &&\abs{H(\lapms{\alpha}a,\lapms{\alpha}b)(x)}\\
&\leq& C [ \lapms{\varepsilon} \abs{\laps{\alpha} u}(x)\ \lapms{\alpha-\varepsilon}\abs{\laps{\alpha} v}(x) +
\lapms{\alpha-\varepsilon}\abs{\laps{\alpha} u}(x)\ \lapms{\varepsilon} \abs{\laps{\alpha} v}(x)\\
&&\lapms{2\varepsilon - \alpha}\brac{\lapms{\alpha-\varepsilon} \abs{\laps{\alpha} u}\
\lapms{\alpha-\varepsilon}\abs{\laps{\alpha} v}}(x)\\
&&+\lap^{-\frac{\alpha}{4}}\abs{\laps{\alpha} u}(x)\ \lap^{-\frac{\alpha}{4}}\abs{\laps{\alpha} v}(x)].
\end{ma}
\]
\end{proposition}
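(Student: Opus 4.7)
The plan is to start from the integral representation of $H_\alpha(u,v)$ already derived in the text and exploit Proposition \ref{pr:multest}. Substituting $u = \lapms{\alpha}a$, $v = \lapms{\alpha}b$ with $a := \laps{\alpha}u$, $b := \laps{\alpha}v$, the representation (with $\tilde c_{\alpha,n}$ absorbed into $\aleq$) reads
\[
H_\alpha(\lapms{\alpha}a,\lapms{\alpha}b)(x) \aleq \iiint \frac{\bigl(|y-\eta|^{-n+\alpha}-|x-\eta|^{-n+\alpha}\bigr)\bigl(|y-\xi|^{-n+\alpha}-|x-\xi|^{-n+\alpha}\bigr)}{|x-y|^{n+\alpha}}\,a(\eta)\,b(\xi)\,d\xi\,d\eta\,dy.
\]
Taking absolute values inside and invoking Proposition \ref{pr:multest}, the kernel is dominated by the sum of four terms (Types I--IV) multiplied by $|x-y|^{-n-\alpha}$. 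The hypothesis $\alpha<2\varepsilon<\min\{2\alpha,n+\alpha\}$ guarantees $\alpha-\varepsilon\in(0,\alpha/2)$, $2\varepsilon-\alpha\in(0,\alpha)$, and $\varepsilon\in(0,\alpha)$, so every Riesz-potential exponent we will use lies in $(0,n)$.

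For Types I and III, I would first integrate in $y$. In Type I, the $y$-dependent part is $|y-\eta|^{-n+(\alpha-\varepsilon)}|x-y|^{-n+(2\varepsilon-\alpha)}$, and the semigroup law for Riesz kernels yields $\iint \aleq |x-\eta|^{-n+\varepsilon}$ (pointwise, up to a constant). Then the remaining $\eta,\xi$ integration is recognized as $\lapms{\varepsilon}|a|(x)\cdot\lapms{\alpha-\varepsilon}|b|(x)$, producing the first term on the right-hand side. Type III is treated symmetrically, producing $\lapms{\alpha-\varepsilon}|a|(x)\cdot\lapms{\varepsilon}|b|(x)$. For Type II, I would instead integrate $\eta$ and $\xi$ first: each gives $\lapms{\alpha-\varepsilon}|a|(y)$ and $\lapms{\alpha-\varepsilon}|b|(y)$, after which the $y$-integration against $|x-y|^{-n+(2\varepsilon-\alpha)}$ is by definition $\lapms{2\varepsilon-\alpha}\bigl(\lapms{\alpha-\varepsilon}|a|\cdot\lapms{\alpha-\varepsilon}|b|\bigr)(x)$, which is exactly the third bound.

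For Type IV the $y$-integral is of the cut-off hyper-singular kernel,
\[
\int_{\{|x-y|>2\max(|x-\eta|,|x-\xi|)\}} |x-y|^{-n-\alpha}\,dy \aleq \max(|x-\eta|,|x-\xi|)^{-\alpha},
\]
which splits via the elementary bound $\max(a,b)^{-\alpha}\leq (ab)^{-\alpha/2}$. The remaining integral becomes
\[
\iint |x-\eta|^{-n+\alpha/2}|x-\xi|^{-n+\alpha/2}|a(\eta)||b(\xi)|\,d\eta\,d\xi = c\,\lap^{-\alpha/4}|a|(x)\cdot\lap^{-\alpha/4}|b|(x),
\]
which is the fourth term. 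Summing over the four cases in Proposition \ref{pr:multest} yields the claim.

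The bookkeeping in combining the four kernel pieces coming from Proposition~\ref{pr:multest} with the singular factor $|x-y|^{-n-\alpha}$ and checking that \emph{every} exponent stays inside $(0,n)$ (so that the Riesz-potential semigroup law applies) is the most delicate point; the specific window $\alpha<2\varepsilon<\min\{2\alpha,n+\alpha\}$ in the hypothesis is precisely what one needs in Types I--III, while the max-geometric-mean step in Type IV costs nothing. The hardest single step is verifying the Type~II factorization, since both $\eta$ and $\xi$ couple to $y$, so one must pick the right order of integration to make the product of two Riesz potentials inside an outer Riesz potential visible.
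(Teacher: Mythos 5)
Your proposal is correct and follows essentially the same route as the paper's own proof: the triple-integral kernel representation, the Type I--IV decomposition from Proposition~\ref{pr:multest}, identification of each piece as a composition of Riesz potentials via the semigroup law, and the $\max(a,b)^{-\alpha}\leq (ab)^{-\alpha/2}$ trick for the Type IV term. The only (immaterial) difference is the order of integration in Types I and III, which Fubini renders equivalent to the paper's computation.
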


\begin{proof}
Set $a := \laps{\alpha} u$, $b := \laps{\alpha} v$. By the definition of $\lapa$
for $\alpha \in (0,1)$,
\[
\begin{ma}
 &&\laps{\alpha} (\lapms{\alpha} a\ \lapms{\alpha}b)(x)\\
&=&c_{n,\alpha} \int \frac{\lapms{\alpha} a(x)\ \lapms{\alpha} b(x) -
\lapms{\alpha} a(y)\ \lapms{\alpha} a(y)}{\abs{x-y}^{n+\alpha}}\ dy\\
&=&c_{n,\alpha} \int \frac{\brac{\lapms{\alpha} a(x)-\lapms{\alpha} a(y)}\
\lapms{\alpha} b(x) + \lapms{\alpha} a(y)\ \brac{\lapms{\alpha}
b(x)-\lapms{\alpha} b(y)}}{\abs{x-y}^{n+\alpha}}\ dy\\
&=&a(x)\ \lapms{\alpha} b(x) + c_{n,\alpha} \int \frac{\brac{\lapms{\alpha}
a(y)-\lapms{\alpha} a(x)}\ \brac{\lapms{\alpha} b(x)-\lapms{\alpha}
b(y)}}{\abs{x-y}^{n+\alpha}}\ dy + b(x)\ \lapms{\alpha} a(x).
\end{ma}
\]
Consequently,
\[
\begin{ma}
 &&\abs{H(\lapms{\alpha}a,\lapms{\alpha}b)(x)}\\
&\aleq& \intl_{\R^n}\intl_{\R^n}\intl_{\R^n}
\frac{\abs{\abs{y-\eta}^{-n+\alpha}-\abs{x-\eta}^{-n+\alpha}}\
\abs{\abs{y-\xi}^{-n+\alpha}-\abs{x-\xi}^{-n+\alpha}}}{\abs{x-y}^{n+\alpha}}\
\abs{a}(\eta)\ \abs{b}(\xi)\ dy\ d\xi\ d\eta.
\end{ma}
\]
Pick $\varepsilon \in (0,1)$ in Proposition~\ref{pr:multest} such that
\[
 \alpha < 2\varepsilon < \min\{2\alpha,n+\alpha\}.
\]
Let us see, for instance, how our expression behaves if the ``Type I``-estimate from the proof of Proposition~\ref{pr:multest} is applicable, that is, if
\[
\frac{\abs{\abs{y-\eta}^{-n+\alpha}-\abs{x-\eta}^{-n+\alpha}}\
\abs{\abs{y-\xi}^{-n+\alpha}-\abs{x-\xi}^{-n+\alpha}}}{\abs{x-y}^{n+\alpha}}\
\aleq {\abs{y-\eta}^{-n+\alpha-\varepsilon}\
\abs{x-\xi}^{-n+\alpha-\varepsilon}} {\abs{x-y}^{-n+2\varepsilon-\alpha}}
\]
Observe, by the choice of $\varepsilon$, all the appearing kernels on the right-hand side of this estimate have the exponent $n-\sigma$ for some $\sigma > 0$. That is, all the appearing kernels correspond to the kernel of a Riesz potential $\lapms{\sigma}$, see \eqref{eq:lo:rieszpotentialdef}. Namely,
\[
\int \abs{y-\eta}^{-n+\alpha-\varepsilon}\ \abs{a}(\eta)\ d\eta \approx \lapms{\alpha - \varepsilon} \abs{a}(y),
\]
\[
\int \abs{x-\xi}^{-n+\alpha-\varepsilon}\ \abs{b}(\xi)\ d\xi \approx \lapms{\alpha-\varepsilon} \abs{b}(x),
\]
and finally
\[
\int {\abs{x-y}^{-n+2\varepsilon-\alpha}} \lapms{\alpha - \varepsilon} \abs{a}(y)\ dy \approx \lapms{2\varepsilon-\alpha}\lapms{\alpha - \varepsilon} \abs{a}(x) \approx \lapms{\varepsilon} \abs{a}(x).
\]
By these kind of arguments, one obtains
\[
\begin{ma}
 &&\abs{H(\lapms{\alpha}a,\lapms{\alpha}b)(x)}\\
&\aleq& \lapms{\varepsilon} \abs{a}(x)\ \lapms{\alpha-\varepsilon}\abs{b}(x) +
\lapms{\alpha-\varepsilon}\abs{a}(x)\ \lapms{\varepsilon} \abs{b}(x)\\
&&\lapms{2\varepsilon - \alpha}\brac{\lapms{\alpha-\varepsilon} \abs{a}\
\lapms{\alpha-\varepsilon}\abs{b}}(x)\\
&&+A,
\end{ma}
\]
where
\[
 \begin{ma}
  A &:=& \intl_{\R^n}\intl_{\R^n}\intl_{\R^n} \frac{\abs{x-\eta}^{-n+\alpha}
\abs{x-\xi}^{-n+\alpha}}{\abs{x-y}^{n+\alpha}}\ \chi_{\abs{x-y} > 2
\abs{x-\xi}}\ \chi_{\abs{x-y} > 2 \abs{x-\eta}}\ \abs{a}(\eta)\ \abs{b}(\xi)\
dy\ d\xi\ d\eta\\
&\aleq{}& \intl_{\R^n}\intl_{\R^n} \abs{x-\eta}^{-n+\alpha}\
\abs{x-\xi}^{-n+\alpha}\ \abs{x-\xi}^{-\frac{\alpha}{2}}\
\abs{x-\eta}^{-\frac{\alpha}{2}}\ \abs{a}(\eta)\ \abs{b}(\xi)\ d\xi\ d\eta\\
&=& \lap^{-\frac{\alpha}{4}}\abs{a}(x)\ \lap^{-\frac{\alpha}{4}}\abs{b}(x).
 \end{ma}
\]
Thus we can conclude the proof: we choose $L_\alpha = 3$, with
$s_1 = \varepsilon$, $t_1 = \alpha - \varepsilon$, $s_1-t_1 = 2\varepsilon -\alpha$ as first term, then with interchanged roles $s_2 = t_1$ and $t_2 = s_2$,  $s_2-t_2 = 2\varepsilon -\alpha$ and finally $s_3 = \alpha/2$, $t_3 = \alpha/2$\,.\\

\end{proof}
\begin{remark}
{\rm About the strategy of the above proof let us remark, how the decomposition argument we apply in \cite{Sfracenergy} and here, is related to the arguments in \cite{DR1dSphere} and \cite{SNHarmS10}: For the para-product estimates of the 3-term commutators in \cite{DR1dSphere}, the authors  used an infinite Taylor expansion after Fourier transform in the phase space whenever the symbols were rather close to each other.

Instead, in \cite{SNHarmS10} the mean value formula (that is: a one-step Taylor expansion) was employed, also after Fourier transform, which lead to a simple pointwise estimate in the phase space which sufficed for the purposes there, but did not have the full power of the more complicated, yet more generalizable argument in \cite{DR1dSphere}. Both of these arguments, essentially obtained \emph{pointwise} bounds in the \emph{phase space}, which -- transformed in the \emph{geometric space} -- gives not pointwise, but $L^p(\R^n)$-estimates.

With the method introduced in \cite{Sfracenergy} and employed here, we obtain pointwise results in the \emph{geometric space}.

But let us stress, that both, the arguments in \cite{SNHarmS10} in the phase space and in \cite{Sfracenergy} in the geometric space, which apply both a one-step Taylor expansion, are rough in the following sense: When considering Hardy-space or BMO-space estimates, they do not seem to give the optimal result (which is, anyways, not pointwise anymore), and in this case, the strategy of using para-products as developed in \cite{DR1dSphere} seems more viable, see, e.g., the Hardy-space estimates in \cite{DR1dSphere}.}
\end{remark}

\subsection{The case \texorpdfstring{$\alpha \geq 1$}{alpha >= 1}}
We will reduce the case $\alpha \geq 1$ to the case $\alpha \in (0,1)$ already
discussed.
For the case $\alpha = 1+{\tilde{\alpha}} \in [1,2)$, we use the following
argument: Let $\Rz_i$ be the $i$-th Riesz-transform,
\[
 \Rz_i f(x) = c\ \intl \frac{(x-y)^i}{\abs{x-y}^{n+1}} f(y)\ dy.
\]
Then for $1+{\tilde{\alpha}} \in [1,2)$,
\[
\begin{ma}
 \Rz_i H_{1+{\tilde{\alpha}}}(u,v) &=& \lapa \partial_i (uv) - \Rz_i (u
\laps{1+{\tilde{\alpha}}} v) - \Rz_i (v \laps{1+{\tilde{\alpha}}} u)\\
&=& \lapa (u \partial_i v) - u \laps{{\tilde{\alpha}}} \partial_i v -
\laps{{\tilde{\alpha}}} u \ \partial_i v \\
&& + \lapa (v \partial_i u) - v \laps{{\tilde{\alpha}}} \partial_i u -
\laps{{\tilde{\alpha}}} v \ \partial_i u \\
&&- \Rz_i (u \laps{1+{\tilde{\alpha}}} v) + u \laps{{\tilde{\alpha}}} \partial_i
v \\
&&- \Rz_i (v \laps{1+{\tilde{\alpha}}} u) + v \laps{{\tilde{\alpha}}} \partial_i
u \\
&& + \laps{{\tilde{\alpha}}} v \ \partial_i u + \laps{{\tilde{\alpha}}} u \
\partial_i v\\
&=& H_{{\tilde{\alpha}}} (u,\partial_i v) + H_{{\tilde{\alpha}}} (v,\partial_i
u)\\
&&- \Rz_i (u \laps{1+{\tilde{\alpha}}} v) + u \Rz_i \laps{1+{\tilde{\alpha}}} v
\\
&&- \Rz_i (v \laps{1+{\tilde{\alpha}}} u) + v \Rz_i \laps{1+{\tilde{\alpha}}} u
\\
&& + \laps{{\tilde{\alpha}}} v \ \partial_i u + \laps{{\tilde{\alpha}}} u \
\partial_i v.
\end{ma}
\]
In the case ${\tilde{\alpha}} = 0$, we write this as
\[
\begin{ma}
 \Rz_i H_{1}(u,v) &=&- \Rz_i (u \laps{1+{\tilde{\alpha}}} v) + u \Rz_i
\laps{1+{\tilde{\alpha}}} \partial_i v \\
&&- \Rz_i (v \laps{1+{\tilde{\alpha}}} u) + v \Rz_i\laps{1+{\tilde{\alpha}}} u
\\
\end{ma}
\]
The terms $H_{\tilde{\alpha}}(\cdot,\cdot)$, as we've seen above, and the terms
$\laps{{\tilde{\alpha}}} u \ \partial_i v$ and $\laps{{\tilde{\alpha}}} v \
\partial_i u$ are actually products of lower order operators applied to $u$ and
$v$, respectively. Moreover, we have the following estimate, which should be
compared to the famous commutator estimates and their relation to Hardy spaces
and BMO, by Coifman, Rochberg, Weiss \cite{CRW76} and the related work by
\cite{Chan82}.
\begin{proposition}
For ${\tilde{\alpha}} \in [0,1)$ there exist a constant $C_{\tilde{\alpha}} > 0$
and a number $L \equiv L_{\tilde{\alpha}} \in \N$, and for $k \in
\{1,\ldots,L\}$ constants $s_k \in (0,1)$, $t_k \in [0,s_k]$ such that
\[
 \Rz_i (G\ \lapms{1+{\tilde{\alpha}}} F) - (\Rz_i G\ \lapms{1+{\tilde{\alpha}}}
F) \aleq C\ \sum_{k=1}^L \lapms{s_k-t_k} \brac{\lapms{t_k} \abs{G}\
\lap^{-\frac{1+{\tilde{\alpha}}}{2}+\frac{s_k}{2}} \abs{F}}.
\]
Moreover, $\abs{s_k-t_k}$ can be supposed to be arbitrarily small.
\end{proposition}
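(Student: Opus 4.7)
The approach parallels that of Proposition~\ref{pr:lowerorderalphal1}: write every operator as an integral against its kernel, use a cancellation identity to reveal a difference structure, substitute Riesz potential representations, and then reduce the whole estimate to a multiplier bound in the spirit of Proposition~\ref{pr:multest}. Concretely, set $h := \lapms{1+\tilde\alpha} F$ and recall the principal-value cancellation $c\,p.v.\!\int\frac{(x-y)^i}{|x-y|^{n+1}}\,dy = 0$. Rewriting $(\Rz_i G)(x)\,h(x)$ as $c\,p.v.\!\int \frac{(x-y)^i}{|x-y|^{n+1}}G(y)\,h(x)\,dy$ and subtracting yields the pleasant identity
\[
\Rz_i(Gh)(x) - (\Rz_i G)(x)\,h(x) \;=\; c\,p.v.\!\int \frac{(x-y)^i}{|x-y|^{n+1}}\,G(y)\,(h(y)-h(x))\,dy.
\]
Substituting $h(y)-h(x)=c'\!\int F(\xi)(|y-\xi|^{-n+1+\tilde\alpha}-|x-\xi|^{-n+1+\tilde\alpha})\,d\xi$ from \eqref{eq:lo:rieszpotentialdef} turns the LHS into a triple integral of $G(y)F(\xi)$ against the combined kernel
\[
K(x,y,\xi)\;:=\;\frac{(x-y)^i}{|x-y|^{n+1}}\bigl(|y-\xi|^{-n+1+\tilde\alpha}-|x-\xi|^{-n+1+\tilde\alpha}\bigr).
\]

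The heart of the argument is a pointwise bound on $|K|$ analogous to Proposition~\ref{pr:multest}. The key observation is that $|(x-y)^i/|x-y|^{n+1}|\leq|x-y|^{-n}$, i.e.\ the Riesz kernel has the pointwise size of a ``zero-order Riesz potential''. Decomposing $(x,y,\xi)$-space by the relative sizes of $|x-y|,|x-\xi|,|y-\xi|$, in the region where $|x-y|$ is comparable to or smaller than both $|x-\xi|$ and $|y-\xi|$ I apply the mean value theorem to the kernel difference to gain a factor $|x-y|^\varepsilon$ at the expense of slightly sharpening the $(\cdot-\xi)$-side exponents; for any $\varepsilon\in(0,1)$ this produces
\[
|K(x,y,\xi)|\;\lesssim\;|x-y|^{-n+\varepsilon}\bigl(|x-\xi|^{-n+1+\tilde\alpha-\varepsilon}+|y-\xi|^{-n+1+\tilde\alpha-\varepsilon}\bigr),
\]
in which all three factors are positive-order Riesz-potential kernels uniformly in $\tilde\alpha\in[0,1)$. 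In the complementary region, where $|x-y|$ dominates, one keeps Type IV-style cutoff terms $|x-y|^{-n}\chi_{|x-y|>2|x-\xi|}|x-\xi|^{-n+1+\tilde\alpha}$ and its $y\leftrightarrow x$ symmetric counterpart, whose integrated contributions are handled exactly as the disjoint-support terms in Proposition~\ref{pr:multest}.

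Plugging these kernel bounds into the triple integral and identifying the resulting convolutions with compositions of Riesz potentials, each small-$|x-y|$ piece contributes an expression of the form $\lapms{\varepsilon}\bigl(|G|\cdot\lapms{1+\tilde\alpha-\varepsilon}|F|\bigr)(x)$, which matches the desired right-hand side with $t_k=0$, $s_k=\varepsilon\in(0,1)$: indeed, $s_k-t_k=\varepsilon$, $\lapms{t_k}|G|=|G|$, and $-\tfrac{1+\tilde\alpha}{2}+\tfrac{s_k}{2}=-\tfrac{1+\tilde\alpha-\varepsilon}{2}$, so that $\lap^{-(1+\tilde\alpha)/2+s_k/2}|F|=\lapms{1+\tilde\alpha-\varepsilon}|F|$. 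The Type IV contributions yield further terms with possibly different $t_k\in[0,s_k]$ (encoding the ``smoothing redistribution'' between $G$ and $F$), which together explain the finite sum over $k=1,\ldots,L$.

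The main obstacle I anticipate is precisely this case analysis for the multiplier estimate: ensuring that all regions of $(x,y,\xi)$-space are covered by compatible kernel bounds, that the absolute-value majorizations $|K|\lesssim\ldots$ remain compatible with the principal-value interpretation of the Riesz-transform kernel near $y=x$ (which is handled by the standard regularization $\int_{|x-y|>\delta}$ and passage $\delta\to 0$), and that the Type IV disjoint-support contributions really do fit into the stated sum with admissible parameters $s_k\in(0,1)$, $t_k\in[0,s_k]$. This is essentially the single-difference analog of the bookkeeping carried out in the proof of Proposition~\ref{pr:multest}, so I expect no conceptual new ideas beyond those already employed there.
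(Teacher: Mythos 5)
Your proposal is correct and follows essentially the same route as the paper: the paper's proof likewise writes the commutator as a double integral of $G(y)F(z)$ against the kernel $\frac{(x-y)^i}{|x-y|^{n+1}}\bigl(|y-z|^{-n+1+\tilde\alpha}-|x-z|^{-n+1+\tilde\alpha}\bigr)$, majorizes the Riesz-transform kernel by $|x-y|^{-n}$, and then invokes the mean-value-theorem case analysis of Proposition~\ref{pr:multest}. The only cosmetic deviation is your mention of Type IV terms: with a single kernel difference the region where $|x-y|$ dominates both $|x-\xi|$ and $|y-\xi|$ is empty by the triangle inequality, so the complementary regions are exactly the $\chi_2,\chi_3$ cases of Proposition~\ref{pr:multest}, which your disjoint-support bounds handle and which land in the stated sum with $(s_k,t_k)=(\varepsilon,\varepsilon)$ or $(\varepsilon,0)$.
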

\begin{proof}
As in the case of $H_\alpha$ for $\alpha \in (0,1)$ above, we exploit that the
difference of the involved operators can be expressed by the differences of
their kernels, more precisely
\[
 \begin{ma}
   &&\Rz_i (G\ \lapms{1+{\tilde{\alpha}}} F)(x) - (\Rz_i G\
\lapms{1+{\tilde{\alpha}}} F(x) \\
&=& c\int \int \frac{(x-y)^i}{\abs{x-y}^{n+1}}\
(\abs{y-z}^{-n+1+{\tilde{\alpha}}} - \abs{x-z}^{-n+1+{\tilde{\alpha}}}) \ G(y)\
F(z)\ dy\ dz\\
&\aleq&  c\int \int \frac{\abs{\abs{y-z}^{-n+1+{\tilde{\alpha}}} -
\abs{x-z}^{-n+1+{\tilde{\alpha}}}}}{\abs{x-y}^{n}} \ \abs{G}(y)\ \abs{F}(z)\ dy\
dz\\
 \end{ma}
\]
Using the mean value theorem, similar to the proof of
Proposition~\ref{pr:multest}, precisely as in \cite{Sfracenergy}, we conclude.
\end{proof}

In particular, we have
\begin{proposition}\label{pr:lowerorderalphal2}
Let $u =\lapms{\alpha} \laps{\alpha} u$, $v =\lapms{\alpha} \laps{\alpha} v$.
Then for $\alpha \in (0,2)$ there exists some constant $C_\alpha > 0$ and a
number $L \equiv L_\alpha \in \N$, and for $k \in \{1,\ldots,L\}$ constants $s_k
\in (0,\alpha)$, $t_k \in [0,s_k]$ such that for any $i = 1, \ldots,n$, where $\Rz_i$ denotes the Riesz transform,
\[
 \abs{\Rz_i H_\alpha(u,v)(x)} \leq C\ \sum_{k=1}^L \lapms{s_k-t_k}
\brac{M_k\lapms{t_k} \abs{\laps{\alpha} u}\ \N_k
\lap^{-\frac{\alpha}{2}+\frac{s_k}{2}} \abs{\laps{\alpha} v}}.
\]
Here, $M_k, N_k$ are possibly Riesz transforms, or the identity. Moreover,
$\abs{s_k-t_k}$ can be supposed to be arbitrarily small.
\end{proposition}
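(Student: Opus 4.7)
The case $\alpha \in (0,1)$ is essentially Proposition \ref{pr:lowerorderalphal1} (with trivial Riesz factor $\Rz_i$ and $M_k = N_k = \operatorname{id}$ or Riesz), so the real content is to upgrade from $\alpha \in (0,1)$ to $\alpha \in [1,2)$. I would write $\alpha = 1 + \tilde\alpha$ with $\tilde\alpha \in [0,1)$ and use the algebraic decomposition already displayed in the excerpt for $\Rz_i H_{1+\tilde\alpha}(u,v)$: a sum of three types of pieces, namely (i) three-term commutators $H_{\tilde\alpha}(u,\partial_i v)$ and $H_{\tilde\alpha}(v,\partial_i u)$ at the \emph{lower} order $\tilde\alpha$, (ii) commutators of the Riesz transform $\Rz_i$ with multiplication (the terms $\Rz_i(u\laps{\alpha}v) - u\Rz_i\laps{\alpha} v$ and the analogous one with $u,v$ swapped), and (iii) two simple products of one-step-lower operators $\laps{\tilde\alpha} u\,\partial_i v$ and $\laps{\tilde\alpha} v\,\partial_i u$.

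For piece (i), when $\tilde\alpha > 0$ I would apply Proposition \ref{pr:lowerorderalphal1} directly to $H_{\tilde\alpha}(u,\partial_i v)$; the identities $\partial_i v = \Rz_i \laps{1} v$ and $\laps{\tilde\alpha}\partial_i v = \Rz_i \laps{\alpha}v$ absorb the derivative cleanly into a Riesz factor on $\laps{\alpha} v$, so each Riesz-potential $\lapms{\sigma}\abs{\laps{\tilde\alpha}(\partial_i v)}$ produced by Proposition \ref{pr:lowerorderalphal1} becomes $\lapms{\sigma}\abs{\Rz_i \laps{\alpha} v}$, fitting the target form with some $N_k = \Rz_i$. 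When $\tilde\alpha = 0$, as observed in the excerpt, the $H_{\tilde\alpha}$ terms are simply absent from the identity, so nothing has to be estimated at that step. For piece (ii), I would apply the Riesz-commutator proposition stated just above Proposition \ref{pr:lowerorderalphal2}, with $F = \laps{\alpha} v$, $G = u = \lapms{\alpha}\laps{\alpha}u$ (and symmetrically), which immediately supplies a bound of the desired form $\sum_k \lapms{s_k-t_k}\bigl(\lapms{t_k}\abs{\laps{\alpha} u}\cdot \lapms{\alpha - s_k}\abs{\laps{\alpha} v}\bigr)$ with $\abs{s_k-t_k}$ arbitrarily small. For piece (iii), I would rewrite $\laps{\tilde\alpha} u\,\partial_i v = \lapms{1}\laps{\alpha} u \cdot \Rz_i\lapms{\tilde\alpha}\laps{\alpha} v$, which is already in the target format with $s_k = t_k = 1$ (so $s_k - t_k = 0$ and $\lapms{0}$ is the identity), and $M_k = \operatorname{id}$, $N_k = \Rz_i$.

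Collecting all pieces and relabelling the parameters, I obtain the claimed representation with some finite $L = L_\alpha$. The main obstacle, as usual in this kind of estimate, is purely bookkeeping: one must check that every $s_k$ produced by the three sources lies in $(0,\alpha)$, that every $t_k \in [0,s_k]$, and that the smallness of $\abs{s_k - t_k}$ from Proposition \ref{pr:lowerorderalphal1} and from the Riesz-commutator proposition is preserved after adding $1$ to shift from the $\tilde\alpha$-scale to the $\alpha$-scale. For the piece (iii) terms the difference $s_k - t_k$ is exactly $0$, so these are automatic. No additional compensation argument is needed beyond what is already packaged in Proposition \ref{pr:lowerorderalphal1} and the Riesz-commutator proposition, so the proof is essentially a case split plus bookkeeping.
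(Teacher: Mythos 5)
Your proposal follows the paper's own route exactly: the displayed identity for $\Rz_i H_{1+\tilde\alpha}(u,v)$, Proposition~\ref{pr:lowerorderalphal1} for the $H_{\tilde\alpha}$ pieces, the Riesz-transform commutator proposition for the commutator pieces, and direct rewriting of the lower-order product terms into the target form --- which is precisely (and in fact somewhat more explicitly than) what the paper does. One cosmetic remark: to match the commutator proposition as stated, the roles should be $G = \laps{\alpha} v$ and $\lapms{\alpha} F = u$ (the rough factor is the one carrying the Riesz transform), but this relabelling does not affect the argument.
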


For $\alpha = K + \tilde{\alpha}$, $\tilde{\alpha} \in (0,2)$, $K \in \N$,
observe that
\[
\begin{ma}
 H_{\alpha}(u,v) &=& \laps{\tilde{\alpha}} (\lap^Ku\ v) + \laps{\tilde{\alpha}}
(u\ \lap^K v) + \sum_{\ontop{\abs{\gamma}+\sabs{\tilde{\gamma}} =
2K}{\abs{\gamma},\sabs{\tilde{\gamma}}\geq 1}} c_{\gamma,\tilde{\gamma}}\
\laps{\tilde{\alpha}} (\partial^\gamma u \partial^{\tilde{\gamma}} v)\\
&&- u\ \lap^K \laps{\tilde{\alpha}} v - u \lap^K \laps{\tilde{\alpha}} v\\
&=& H_{\tilde{\alpha}}(\lap^K u,v)+\lap^K u\ \laps{\tilde{\alpha}}
v+H_{\tilde{\alpha}}( u,\lap^Kv)+\lap^K v\ \laps{\tilde{\alpha}} u\\
&&+\sum_{\ontop{\abs{\gamma}+\sabs{\tilde{\gamma}} =
2K}{\abs{\gamma},\sabs{\tilde{\gamma}}\geq 1}} c_{\gamma,\tilde{\gamma}}\
H_{\tilde{\alpha}} (\partial^\gamma u, \partial^{\tilde{\gamma}} v)\\
&&+\sum_{\ontop{\abs{\gamma}+\sabs{\tilde{\gamma}} =
2K}{\abs{\gamma},\sabs{\tilde{\gamma}}\geq 1}} c_{\gamma,\tilde{\gamma}}\
\laps{\tilde{\alpha}}\partial^\gamma u\ \partial^{\tilde{\gamma}}
v+\partial^\gamma u\ \laps{\tilde{\alpha}}\partial^{\tilde{\gamma}} v
\end{ma}
\]
Using that all terms which are not of the form $H_\alpha$, are actually products
of lower order operators, one concludes

\begin{lemma}\label{la:lowerorderalphaln}
Let $u =\lapms{\alpha} \laps{\alpha} u$, $v =\lapms{\alpha} \laps{\alpha} v$.
Then for $\alpha \in (0,n)$ there exists some constant $C_\alpha > 0$ and a
number $L \equiv L_\alpha \in \N$, and for $k \in \{1,\ldots,L\}$ constants $s_k
\in (0,\alpha)$, $t_k \in [0,s_k]$ such that for any $i = 1, \ldots,n$,
\[
 \abs{\Rz_i H_\alpha(u,v)(x)} \leq C\ \sum_{k=1}^L M_k \lapms{s_k-t_k}
\brac{\lapms{t_k} \abs{\laps{\alpha} u}\ N_k
\lap^{-\frac{\alpha}{2}+\frac{s_k}{2}} \abs{\laps{\alpha} v}}.
\]
Here, $M_k, N_k$ are possibly Riesz transforms, or the identity. Moreover,
$\abs{s_k-t_k}$ can be supposed to be arbitrarily small.
\end{lemma}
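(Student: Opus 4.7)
The plan is to reduce the general case $\alpha\in(0,n)$ to the case $\alpha\in(0,2)$ already established in Proposition~\ref{pr:lowerorderalphal2}, using the Leibniz-type decomposition displayed immediately before the statement of the lemma. Write $\alpha = 2m+\tilde{\alpha}$ with $m\in\N_{\geq 0}$ and $\tilde{\alpha}\in(0,2)$, so that $\laps{\alpha}=\laps{\tilde{\alpha}}\lap^m$ and $\lap^m$ is a local differential operator of order $2m$. If $m=0$ there is nothing to prove; otherwise I would expand $\lap^m(uv)$ by the Leibniz rule and rearrange as in the excerpt. This expresses $H_\alpha(u,v)$ as a finite sum of three-term commutators $H_{\tilde{\alpha}}(\partial^\gamma u,\partial^{\tilde{\gamma}}v)$ with $|\gamma|+|\tilde{\gamma}|=2m$ (including the extreme cases $H_{\tilde{\alpha}}(\lap^m u,v)$ and $H_{\tilde{\alpha}}(u,\lap^m v)$), together with ``pure product'' terms of the form $\laps{\tilde{\alpha}}\partial^\gamma u\cdot\partial^{\tilde{\gamma}}v$ and $\partial^\gamma u\cdot\laps{\tilde{\alpha}}\partial^{\tilde{\gamma}}v$. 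After composing with $\Rz_i$, it suffices to bound the $\Rz_i$-image of each piece by the asserted template.

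For the three-term commutator pieces, I would apply Proposition~\ref{pr:lowerorderalphal2} to $\Rz_i H_{\tilde{\alpha}}(\partial^\gamma u,\partial^{\tilde{\gamma}}v)$, obtaining a pointwise bound by a sum of Riesz potentials of $|\laps{\tilde{\alpha}}\partial^\gamma u|$ and $|\laps{\tilde{\alpha}}\partial^{\tilde{\gamma}}v|$; for the pure product pieces no further commutator estimate is needed. In either case the key step is to rewrite the lower-order operators acting on $u$ and $v$ in terms of $\lapa u$ and $\lapa v$: using the factorization $\partial^\gamma = c_\gamma R_\gamma\lap^{|\gamma|/2}$ with $R_\gamma$ a composition of Riesz transforms, together with the normalization $u=\lapms{\alpha}\lapa u$, one computes
\[
  \laps{\tilde{\alpha}}\partial^\gamma u \;=\; c_\gamma\,R_\gamma\,\lapms{2m-|\gamma|}\,\lapa u,\qquad \partial^{\tilde{\gamma}} v \;=\; c_{\tilde{\gamma}}\,R_{\tilde{\gamma}}\,\lapms{\alpha-|\tilde{\gamma}|}\,\lapa v,
\]
and symmetric identities with the roles of $u,v$ interchanged. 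Since every multi-index in the decomposition satisfies $|\gamma|,|\tilde{\gamma}|\leq 2m<\alpha$, the exponents $2m-|\gamma|$ and $\alpha-|\tilde{\gamma}|$ are non-negative and these are bona fide Riesz potentials. Matching indices with the statement, the pure product terms correspond to the degenerate case $s_k=t_k$ (so the outer $\lapms{s_k-t_k}$ is the identity), while the three-term commutator terms inherit the outer $\lapms{s_k-t_k}$ with $|s_k-t_k|$ arbitrarily small directly from Proposition~\ref{pr:lowerorderalphal2}.

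The main obstacle is pure bookkeeping: the Leibniz expansion produces many terms, and for each one must verify that the resulting indices satisfy $s_k\in(0,\alpha)$, $t_k\in[0,s_k]$ and that the Riesz transforms $R_\gamma,R_{\tilde{\gamma}}$ that appear can be absorbed into the operators $M_k,N_k$ allowed by the statement. A minor subtlety is that Riesz transforms do not commute with absolute values, so the $M_k$ and $N_k$ must sit \emph{outside} the modulus bars in the bound; this is precisely the flexibility afforded by the way the lemma is formulated. The smallness of $|s_k-t_k|$ ultimately traces back to the freedom in choosing $\varepsilon$ in Proposition~\ref{pr:multest}, propagated through Proposition~\ref{pr:lowerorderalphal2}.
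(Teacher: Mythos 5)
Your proposal follows essentially the same route as the paper: the paper's proof consists precisely of the displayed Leibniz-type decomposition of $H_\alpha$ into $H_{\tilde{\alpha}}$-commutators of derivatives plus pure products of lower-order operators, an appeal to Proposition~\ref{pr:lowerorderalphal2} for the former, and the observation that everything rewrites as Riesz potentials of $\laps{\alpha}u$, $\laps{\alpha}v$. Your additional bookkeeping (the identities $\laps{\tilde{\alpha}}\partial^\gamma u = c_\gamma R_\gamma \lapms{2m-\abs{\gamma}}\laps{\alpha}u$, the verification that all potential orders stay positive, and the remark on absorbing Riesz transforms) correctly fills in what the paper leaves to the reader.
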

In particular,
\begin{proposition}\label{pr:Hest}
Let $\alpha \in (0,n)$, $q, q_1, q_2 \in [1,\infty]$ such that
\[
 \fracm{q} = \fracm{q_1} + \fracm{q_2}.
\]
Then
\[
 \vrac{H_\alpha (u,v)}_{(\frac{n}{\alpha},q),\R^n} \aleq \vrac{\lapa
u}_{(\frac{n}{\alpha},q_2),\R^n}\ \vrac{\lapa v}_{(\frac{n}{\alpha},q_2),\R^n}.
\]
\end{proposition}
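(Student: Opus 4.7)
The plan is to deduce Proposition~\ref{pr:Hest} directly from the pointwise estimate of Lemma~\ref{la:lowerorderalphaln} by combining three standard tools on Lorentz spaces: the boundedness of Riesz transforms, the Hardy--Littlewood--Sobolev inequality $\lapms{\beta}\colon L^{p,q}(\R^n)\to L^{p^\ast,q}(\R^n)$ with $\frac{1}{p^\ast}=\frac{1}{p}-\frac{\beta}{n}$, and H\"older's inequality $L^{p_1,q_1}\cdot L^{p_2,q_2}\hookrightarrow L^{p,q}$ with $\frac{1}{p}=\frac{1}{p_1}+\frac{1}{p_2}$ and $\frac{1}{q}=\frac{1}{q_1}+\frac{1}{q_2}$.

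First, since the vector of Riesz transforms $(\Rz_1,\ldots,\Rz_n)$ is invertible (modulo constants) and each $\Rz_i$ is bounded on $L^{p,q}(\R^n)$ for $p\in(1,\infty)$, $q\in[1,\infty]$, it suffices to control $\vrac{\Rz_i H_\alpha(u,v)}_{(n/\alpha,q),\R^n}$ for each $i$. By Lemma~\ref{la:lowerorderalphaln}, this task reduces to bounding each summand
\[
T_k := M_k \lapms{s_k-t_k}\!\left(\lapms{t_k}|\laps{\alpha}u|\;\cdot\;N_k\lap^{-\frac{\alpha-s_k}{2}}|\laps{\alpha}v|\right),
\]
with $s_k\in(0,\alpha)$, $t_k\in[0,s_k]$, and $M_k,N_k$ either identity or Riesz transforms. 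The outer $M_k$ is harmless on Lorentz scales, so we focus on the kernel part.

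Second, I track the exponents. Starting from $\laps{\alpha}u\in L^{n/\alpha,q_1}$ and $\laps{\alpha}v\in L^{n/\alpha,q_2}$, the Hardy--Littlewood--Sobolev estimate yields
\[
\lapms{t_k}|\laps{\alpha}u|\in L^{\frac{n}{\alpha-t_k},q_1},\qquad N_k\lap^{-\frac{\alpha-s_k}{2}}|\laps{\alpha}v|\in L^{\frac{n}{s_k},q_2},
\]
where in the second line I used $\frac{1}{n/\alpha}-\frac{\alpha-s_k}{n}=\frac{s_k}{n}$ and the Lorentz-boundedness of $N_k$. Applying H\"older in Lorentz spaces to the product gives membership in $L^{r,q}$ with
\[
\frac{1}{r}=\frac{\alpha-t_k}{n}+\frac{s_k}{n}=\frac{\alpha-t_k+s_k}{n},\qquad \frac{1}{q}=\frac{1}{q_1}+\frac{1}{q_2}.
\]
A second application of HLS for $\lapms{s_k-t_k}$ (note $s_k-t_k\ge 0$, with the trivial identity case when $s_k=t_k$) raises the integrability exponent to $r^\ast$ with
\[
\frac{1}{r^\ast}=\frac{1}{r}-\frac{s_k-t_k}{n}=\frac{\alpha}{n},
\]
leaving the Lorentz second index $q$ untouched. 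A final application of Lorentz-boundedness of $M_k$ shows $T_k\in L^{n/\alpha,q}$ with the claimed bilinear bound. Summing over the finitely many $k\in\{1,\ldots,L\}$ concludes the proof.

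The arguments used are standard, so no genuine obstacle arises; the one point that requires mild care is checking that every intermediate exponent $n/(\alpha-t_k)$, $n/s_k$, $n/r$ lies in $(1,\infty)$ so that HLS and H\"older on Lorentz spaces are applicable. This is ensured by the constraints $s_k\in(0,\alpha)$, $t_k\in[0,s_k]$ together with $\alpha\in(0,n)$ (and by choosing $|s_k-t_k|$ small when needed, as explicitly allowed by Lemma~\ref{la:lowerorderalphaln}).
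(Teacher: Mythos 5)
Your proof is correct and follows exactly the route the paper intends: Proposition~\ref{pr:Hest} is presented there as an immediate consequence (``In particular'') of Lemma~\ref{la:lowerorderalphaln}, obtained precisely by the combination of Riesz-transform boundedness on Lorentz spaces, Hardy--Littlewood--Sobolev for the Riesz potentials, and O'Neil's H\"older inequality that you spell out, with the same exponent bookkeeping. (You also silently correct the typo in the statement, whose right-hand side should carry the indices $q_1$ and $q_2$ rather than $q_2$ twice.)
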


\subsection{Local Estimates}
In this section, the goal is give in Lemma~\ref{la:Hvwlocest} a localized
version of Proposition~\ref{pr:Hest}.


Similar to the arguments in Proposition \ref{pr:HwvpBlrest}, one can show
\begin{proposition}\label{pr:locestloop1}
There is $\gamma > 0$ such that for any $\Lambda > 4$, $s \in (0,\alpha)$, $t
\in [0,s]$,
\[
 \Vert \lapms{s-t} \brac{\lapms{t} \abs{\eta_{-\Lambda r } a}\
\lap^{-\frac{\alpha}{2}+\frac{s}{2}} \abs{b}} \Vert_{\frac{n}{\alpha},B_r} \aleq
\Lambda^{-\gamma}\ \Vert a \Vert_{\frac{n}{\alpha}}\ \Vert b
\Vert_{\frac{n}{\alpha}}
\]
and for $\Lambda_1,\Lambda_2 > 4$
\[
 \Vert \lapms{s-t} \brac{\lapms{t} \abs{\eta_{-\Lambda_1 r} a}\
\lap^{-\frac{\alpha}{2}+\frac{s}{2}} \brac{\eta_{-\Lambda_2 r} \abs{b}}}
\Vert_{\frac{n}{\alpha},B_r} \aleq \Lambda_1^{-\gamma}\ \ \Lambda_2^{-\gamma}\
\Vert a \Vert_{\frac{n}{\alpha}}\ \Vert b \Vert_{\frac{n}{\alpha}}
\]
\end{proposition}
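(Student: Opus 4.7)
The plan is to combine a duality argument with a near-field/far-field decomposition at the intermediate scale $\Lambda^{1/2}r$, in the spirit of the disjoint-support arguments of Proposition~\ref{pr:stupidestumw} and Proposition~\ref{pr:HwvpBlrest}.

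For the first estimate, I would dualize, writing the left-hand side as
\[
\sup_{\psi}\, \intl_{\R^n} \lapms{s-t}\psi\ \lapms{t}\abs{\eta_{-\Lambda r} a}\ \lap^{-\frac{\alpha}{2}+\frac{s}{2}}\abs{b},
\]
where the supremum is taken over $\psi \in C_0^\infty(B_r)$ with $\Vert \psi\Vert_{n/(n-\alpha)} \leq 1$, exploiting the self-adjointness of $\lapms{s-t}$. I would then split the integration domain into a near part $B_{\Lambda^{1/2}r}$ and a far part $\R^n \setminus B_{\Lambda^{1/2}r}$.

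On the near part, the support of $\eta_{-\Lambda r}a$ lies outside $B_{\Lambda r}$, hence at distance of order $\Lambda r$ from $B_{\Lambda^{1/2}r}$ once $\Lambda$ is large enough. A direct application of H\"older's inequality in the Riesz-potential integral, using $t < \alpha$ to ensure integrability at infinity, yields the pointwise bound
\[
\lapms{t}\abs{\eta_{-\Lambda r}a}(x) \aleq (\Lambda r)^{t-\alpha}\,\Vert a\Vert_{n/\alpha}, \qquad x \in B_{\Lambda^{1/2}r}.
\]
A three-factor H\"older inequality involving $\chi_{B_{\Lambda^{1/2}r}} \in L^{n/(\alpha-t)}$ together with the Hardy-Littlewood-Sobolev bound $\Vert \lap^{-\alpha/2+s/2}\abs{b}\Vert_{n/s}\aleq \Vert b\Vert_{n/\alpha}$ then produces a near-part contribution of size $(\Lambda r)^{t-\alpha}(\Lambda^{1/2}r)^{\alpha-t} \Vert a\Vert_{n/\alpha}\Vert b\Vert_{n/\alpha} = \Lambda^{-(\alpha-t)/2}\Vert a\Vert_{n/\alpha}\Vert b\Vert_{n/\alpha}$; crucially, the powers of $r$ cancel.

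On the far part, I would instead use $\supp\psi \subset B_r$: for $|x|\geq \Lambda^{1/2}r \geq 2r$ the kernel $|x-z|^{-n+s-t}$ is comparable to $|x|^{-n+s-t}$, yielding $\abs{\lapms{s-t}\psi(x)} \aleq r^{\alpha}\abs{x}^{-n+s-t}$ and hence, after integration, $\Vert \lapms{s-t}\psi\,\chi_{\{|x|\geq \Lambda^{1/2}r\}}\Vert_{n/(n-\alpha-s+t)} \aleq \Lambda^{-\alpha/2}$. Pairing this with the $L^{n/(\alpha+s-t)}$-bound of the remaining product (via H\"older and HLS) gives a far-part contribution of order $\Lambda^{-\alpha/2}$. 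Setting $\gamma := \min\{(\alpha-t)/2,\alpha/2\}$ concludes the first estimate. The second inequality follows by the same scheme, except that on the near part both Riesz potentials $\lapms{t}\abs{\eta_{-\Lambda_1 r}a}$ and $\lap^{-\alpha/2+s/2}\abs{\eta_{-\Lambda_2 r}b}$ pick up an independent disjoint-support decay factor, yielding the product $\Lambda_1^{-\gamma}\Lambda_2^{-\gamma}$. The main technical obstacle is the bookkeeping in the three- and four-factor H\"older inequalities, to ensure that all powers of $r$ cancel and only the $\Lambda$-decay survives; this relies precisely on the scale-invariance of the critical Lebesgue exponent $n/\alpha$ under the Riesz potentials $\lapms{\sigma}$.
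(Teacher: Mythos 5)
Your proposal is correct and follows essentially the same route as the paper's proof: duality against $\psi\in C_0^\infty(B_r)$ with $\Vert\psi\Vert_{n/(n-\alpha)}\leq 1$, a near/far decomposition at the intermediate scale $\sqrt{\Lambda}\,r$, disjoint-support kernel decay for $\lapms{t}\abs{\eta_{-\Lambda r}a}$ on the near part and for $\lapms{s-t}\psi$ on the far part, combined with H\"older and Hardy--Littlewood--Sobolev bounds for the remaining factors. The only difference is cosmetic: you carry out the pointwise kernel estimates and the exponent bookkeeping explicitly, where the paper delegates them to the arguments of Proposition~\ref{pr:stupidestumw} and \cite[Proposition 4.4]{Sfracenergy}.
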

\begin{proof}
For some $\psi \in C_0^\infty(B_r)$, $\Vert \psi \Vert_{\frac{n}{n-\alpha}} \leq
1$ we have to estimate (up to possibly Riesz transforms, which we will ignore
again, as they do not change the argument)
\[
\begin{ma}
 &&\int \lapms{s-t} \brac{\lapms{t} \abs{\eta_{-\Lambda} a}\
\lap^{-\frac{\alpha}{2}+\frac{s}{2}} \abs{b}}\ \psi\\
&=& \int \lapms{t} \abs{\eta_{-\Lambda} a}\ \lap^{-\frac{\alpha}{2}+\frac{s}{2}}
\abs{b}\ \lapms{s-t} \psi\\
&\aleq{}& \Vert \eta_{\sqrt{\Lambda}} \lapms{t} \abs{\eta_{-\Lambda} a}
\Vert_{\frac{n}{\alpha -t}}\
\Vert \lap^{-\frac{\alpha}{2}+\frac{s}{2}} \abs{b} \Vert_{\frac{n}{s}}\
\Vert \lapms{s-t} \psi \Vert_{\frac{n}{n-\alpha-s+t}}\\
&& +\Vert \lapms{t} \abs{\eta_{-\Lambda} a} \Vert_{\frac{n}{\alpha -t}}\
\Vert \lap^{-\frac{\alpha}{2}+\frac{s}{2}} \abs{b} \Vert_{\frac{n}{s}}\
\Vert (1-\eta_{\sqrt{\Lambda}}) \lapms{s-t} \psi
\Vert_{\frac{n}{n-\alpha-s+t}}\\
&\aleq{}& \Vert \eta_{\sqrt{\Lambda}} \lapms{t} \abs{\eta_{-\Lambda} a}
\Vert_{\frac{n}{\alpha -t}}\
\Vert b \Vert_{\frac{n}{\alpha}} +\Vert a \Vert_{\frac{n}{\alpha}}\
\Vert b \Vert_{\frac{n}{\alpha}}\
\Vert (1-\eta_{\sqrt{\Lambda}}) \lapms{s-t} \psi
\Vert_{\frac{n}{n-\alpha-s+t}}\\
\end{ma}
\]
Then the first claim follows the arguments for products of (non-local)
fractional operators with disjoint support, cf., e.g., the arguments of
Proposition~\ref{pr:stupidestumw} or \cite[Proposition 4.4]{Sfracenergy}.\\
The second claim follows by the same method, only taking more care in the cutoff
for $b$.
\end{proof}

\begin{lemma}\label{la:Hvwlocest}
Let $v, w \in W^{\alpha,p}(\R^n)$. Then for any $\Lambda > 2$
\[
\begin{ma}
\Vert H(v,w) \Vert_{\frac{n}{\alpha},B_r} &\aleq& \Vert \eta_{\Lambda r}
\laps{\alpha} v \Vert_{\frac{n}{\alpha}}\ \Vert \eta_{\Lambda r} \laps{\alpha} w
\Vert_{\frac{n}{\alpha}}\\
&& + \Lambda^{-\gamma}\ \Vert \laps{\alpha}w \Vert_{\frac{n}{\alpha}}\ \Vert
\eta_{\Lambda r} \laps{\alpha}v \Vert_{\frac{n}{\alpha}}\\
&& + \Lambda^{-\gamma}\ \Vert \laps{\alpha}v \Vert_{\frac{n}{\alpha}}\ \Vert
\eta_{\Lambda r} \laps{\alpha}w \Vert_{\frac{n}{\alpha}}\\
&& + \Lambda^{-\gamma}\ \Vert \laps{\alpha}v \Vert_{\frac{n}{\alpha}}\
\sum_{k=1}^\infty 2^{-k\gamma}\ \Vert \eta_{\Lambda r}^k \laps{\alpha} w
\Vert_{\frac{n}{\alpha}}.
\end{ma}
\]
\end{lemma}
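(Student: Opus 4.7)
\textbf{Plan for the proof of Lemma \ref{la:Hvwlocest}.}

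The strategy is the standard near/far decomposition, reducing to the already-proven Proposition~\ref{pr:Hest} on the near-near interaction and using the disjoint-support machinery of Proposition~\ref{pr:locestloop1} (via the pointwise representation of Lemma~\ref{la:lowerorderalphaln}) on everything else. First, I would write
\[
v = v_1 + v_2, \qquad v_1 := \lapma(\eta_{\Lambda r}\,\laps{\alpha} v),\ v_2 := \lapma\bigl((1-\eta_{\Lambda r})\laps{\alpha} v\bigr),
\]
and analogously $w = w_1 + w_2$ with $w_1 = \lapma(\eta_{\Lambda r}\,\laps{\alpha} w)$, $w_2 = \lapma((1-\eta_{\Lambda r})\laps{\alpha} w)$. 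By bilinearity
\[
H(v,w) = H(v_1,w_1) + H(v_1,w_2) + H(v_2,w_1) + H(v_2,w_2).
\]

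For $H(v_1,w_1)$ there is nothing to localize: Proposition~\ref{pr:Hest} with $q_1=q_2=\frac{2n}{\alpha}$ and the identity $\laps{\alpha} v_1 = \eta_{\Lambda r}\laps{\alpha} v$ (and similarly for $w_1$) yields the first term on the right-hand side. For the two mixed terms $H(v_1,w_2)$ and $H(v_2,w_1)$ restricted to $B_r$, the function $\laps{\alpha}$ of one factor is supported in $\R^n\setminus B_{\Lambda r/2}$ (up to the convention in the paper), so I would invoke the pointwise bound of Lemma~\ref{la:lowerorderalphaln} and then apply Proposition~\ref{pr:locestloop1}: each kernel pairing of an $\eta_{\Lambda r}$-localized piece against a $(1-\eta_{\Lambda r})$-supported piece, tested on $B_r$, picks up the factor $\Lambda^{-\gamma}$ while the surviving norm is either $\Vert\eta_{\Lambda r}\laps{\alpha} v\Vert_{n/\alpha}$ or $\Vert\eta_{\Lambda r}\laps{\alpha} w\Vert_{n/\alpha}$ paired with the uncut $\Vert\laps{\alpha} w\Vert_{n/\alpha}$ or $\Vert\laps{\alpha} v\Vert_{n/\alpha}$. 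This produces exactly the second and third terms.

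For the far-far term $H(v_2,w_2)$ on $B_r$, I would further decompose $w_2$ dyadically along the annular partition from the Notation section: writing $1-\eta_{\Lambda r} = \sum_{k\ge 1}\eta_{\Lambda r}^k$ pointwise gives
\[
w_2 = \sum_{k\geq 1}\lapma(\eta_{\Lambda r}^k \laps{\alpha} w),
\]
so $H(v_2,w_2) = \sum_{k\geq 1} H(v_2, \lapma(\eta_{\Lambda r}^k \laps{\alpha} w))$. Each summand has both factors $\laps{\alpha}$-localized away from $B_r$ (the $v$-factor at scale $\Lambda r$, the $w$-factor at scale $2^k\Lambda r$), so applying Lemma~\ref{la:lowerorderalphaln} pointwise followed by the two-cutoff version of Proposition~\ref{pr:locestloop1} (with $\Lambda_1=\Lambda$ and $\Lambda_2=2^k\Lambda$) yields the bound $\Lambda^{-\gamma}(2^k\Lambda)^{-\gamma}\Vert\laps{\alpha} v\Vert_{n/\alpha}\Vert\eta_{\Lambda r}^k\laps{\alpha} w\Vert_{n/\alpha}$. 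Summing in $k$, absorbing one $\Lambda^{-\gamma}$ factor as an overall prefactor and leaving $2^{-k\gamma}$ inside the sum, delivers the fourth term.

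The only delicate step is the bookkeeping in the last estimate: one must check that the disjoint-support argument of Proposition~\ref{pr:locestloop1} really applies to each triple-kernel term produced by Lemma~\ref{la:lowerorderalphaln} and that the exponents $s_k-t_k$, $t_k$, $\tfrac{\alpha-s_k}{2}$ can be chosen so that both the $v$-cutoff at scale $\Lambda r$ and the $w$-cutoff at scale $2^k\Lambda r$ contribute independent $\Lambda^{-\gamma}$ and $(2^k\Lambda)^{-\gamma}$ gains; since Proposition~\ref{pr:locestloop1} is stated precisely in this two-cutoff form (with the freedom to take $s-t$ and $t$ arbitrarily small), this is a matter of choosing the exponents so that all the appearing Riesz potential norm estimates are subcritical. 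Once this verification is carried out, summing the four contributions yields the stated inequality.
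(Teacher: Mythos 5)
Your proposal is correct and follows essentially the same route as the paper: the same four-fold splitting $v=v_1+v_2$, $w=w_1+w_2$ via $\lapma(\eta_{\Lambda r}\laps{\alpha}\cdot)$, Proposition~\ref{pr:Hest} for the near--near term, Proposition~\ref{pr:locestloop1} for the mixed terms, and the dyadic annular decomposition of $w_2$ combined with the two-cutoff version of Proposition~\ref{pr:locestloop1} for the far--far term. The bookkeeping point you flag at the end is indeed the only delicate step, and the paper handles it exactly as you describe.
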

\begin{proof}
Set
\[
 v_{\Lambda} := \lapms{\alpha} \brac{\eta_{\Lambda r} \laps{\alpha} v},
\]
\[
 v_{-\Lambda} := \lapms{\alpha} \brac{(1-\eta_{\Lambda r}) \laps{\alpha} v},
\]
and
\[
 v_{-\Lambda,k} := \lapms{\alpha} \brac{\eta^k_{\Lambda r} \laps{\alpha} v}.
\]
Then
\[
H(v,w) = H(v_{\Lambda},w_{\Lambda}) + H(v_{-\Lambda},w_{\Lambda}) +
H(v_{\Lambda},w_{-\Lambda}) + H(v_{-\Lambda},w_{-\Lambda}).
\]
We have,
\[
\Vert H(v_{\Lambda},w_{\Lambda}) \Vert_{\frac{n}{\alpha}} \aleq \Vert
\eta_{\Lambda r} \laps{\alpha} v \Vert_{\frac{n}{\alpha}}\ \Vert \eta_{\Lambda
r} \laps{\alpha} w \Vert_{\frac{n}{\alpha}}.
\]
Moreover, the second and the third term are controlled by means of
Proposition~\ref{pr:locestloop1}. It remains to estimate
$H(v_{-\Lambda},w_{-\Lambda})$,
\[
 H(v_{-\Lambda},w_{-\Lambda}) = \sum_{k=1}^\infty H(v_{-\Lambda},w_{-\Lambda,k})
\]
and again by Proposition~\ref{pr:locestloop1},
\[
  \Vert H(v_{-\Lambda},w_{-\Lambda,k}) \Vert_{\frac{n}{\alpha},B_r} \aleq
2^{-\gamma k} \Lambda^{-2\gamma}\ \Vert v \Vert_{\frac{n}{\alpha}}\ \Vert
\eta_{\Lambda r}^k \laps{\alpha} w \Vert_{\frac{n}{\alpha}}
\]
\end{proof}

\newpage
\renewcommand{\thesection}{B}
\renewcommand{\thesubsection}{B.\arabic{subsection}}

\section{Decomposition in Euclidean Spaces}
In the proof of Lemma \ref{la:wallest} we used the following fact, which permitted us to get a full information of $\vrac{\lapa w}_{\pbar}$ from the information of $\vrac{\omega_{ij} w^{j}\lapa w^i}_{\pbar}$, and the normal projection $\vrac{w^{i}\lapa w^i}_{\pbar}$.
\begin{proposition}\label{pr:orthogdecomp}
Let $p \in \R^N$, $\abs{p} = 1$. Set
\[
 \Omega := \{\omega \equiv (\omega_{ij})_{i,j =1}^N: \quad \omega_{ij} = -
\omega_{ij} \in \{-1,0,1\}  \},
\]
and let for $\omega \in \Omega$
\[
 p_\omega \equiv ({p_\omega}_i)_{i=1}^N := (\omega_{ij}p^j)_{i=1}^N.
\]
Then, for uniform constants $c,C$, depending only on the dimension $N$,
\begin{equation}\label{eq:antisym:absqcontrol}
 c\ \abs{q} \leq \sum_{\omega \in \Omega} \abs{\langle p_\omega, q\rangle} +
\abs{\langle p, q\rangle} \leq C\  \abs{q} \quad \mbox{for all $q \in \R^N$.}
\end{equation}
In particular, there is a uniform $\theta \in (0,1)$ such that for any $q \in
\R^N$, if for some $\omega \in \Omega$,
\[
 \abs{\langle p_\omega, q\rangle} \leq \theta \abs{q},
\]
then there exists $\tilde{\omega} \in \Omega$, such that
\[
 \abs{\langle p_\omega, q\rangle} \leq \theta \abs{q} \leq \max \left
\{\abs{\langle p_{\tilde{\omega}}, q\rangle},\  \abs{\langle p,q\rangle}\right
\}.
\]
\end{proposition}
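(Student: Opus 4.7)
The plan is to reduce both assertions to the lower bound in \eqref{eq:antisym:absqcontrol}; the upper bound is immediate, since each $p_\omega$ satisfies $\abs{p_\omega} \leq \vrac{\omega}_{\text{op}}\, \abs{p} \leq C_N$ (the entries of $\omega$ lie in $\{-1,0,1\}$), and $\Omega$ is finite with cardinality depending only on $N$. For the lower bound I would combine a simple linear-algebraic observation with a compactness argument in $p$.

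The linear-algebra step is the only point that needs verification: for every fixed $p$ with $\abs{p}=1$, the vectors $\{p_\omega : \omega \in \Omega\}$ span $p^{\perp}$. To see this, let $e^{ij}\in\Omega$ denote the elementary antisymmetric matrix with $(e^{ij})_{kl}=\delta_{ki}\delta_{lj}-\delta_{kj}\delta_{li}$. A direct calculation gives $e^{ij}p = p^j e_i - p^i e_j$, and then any $v\in p^{\perp}$ satisfies
\[
 v \;=\; (v p^T - p v^T)p \;=\; \suml_{k<l}\brac{v^k p^l - v^l p^k}\, e^{kl}p,
\]
so $p^{\perp}\subset \operatorname{span}_\R\{e^{ij}p\}$. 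Together with $p$ itself this spans $\R^N$. Consequently, the function
\[
 F(p,q)\;:=\;\suml_{\omega\in\Omega}\abs{\langle p_\omega,q\rangle}\;+\;\abs{\langle p,q\rangle}
\]
is positively homogeneous in $q$, satisfies the triangle inequality, and vanishes iff $q$ is orthogonal to every vector in a spanning set, i.e.\ iff $q=0$; hence it is a norm on $\R^N$ for each fixed $p\in S^{N-1}$. Since $F$ is continuous on the compact set $S^{N-1}\times S^{N-1}$ and strictly positive there, it attains a minimum $c=c_N>0$, and homogeneity then yields the uniform lower bound \eqref{eq:antisym:absqcontrol}.

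The second claim falls out as a pigeonhole corollary: the inequality just proved gives, without any hypothesis on $q$,
\[
 \max\left\{\abs{\langle p,q\rangle},\ \max_{\tilde\omega\in\Omega}\abs{\langle p_{\tilde\omega},q\rangle}\right\}\;\geq\;\frac{c}{\abs{\Omega}+1}\,\abs{q}.
\]
Choosing $\theta:=c/(\abs{\Omega}+1)\in(0,1)$ (which depends only on $N$) yields precisely the stated chain of inequalities whenever the hypothesis $\abs{\langle p_\omega,q\rangle}\leq\theta\abs{q}$ holds; in fact the conclusion is valid unconditionally in $\omega$. The only mildly subtle step is the spanning claim, and the main thing to notice there is that the elementary antisymmetric matrices $e^{ij}$ already belong to $\Omega$, so one may legitimately extract a basis of $p^{\perp}$ from inside $\{p_\omega:\omega\in\Omega\}$ without leaving the $\{-1,0,1\}$-valued world.
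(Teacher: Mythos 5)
Your proof is correct, but it reaches the lower bound in \eqref{eq:antisym:absqcontrol} by a genuinely different route than the paper. The paper works quantitatively: after reducing to $\abs{q}=1$, $q\perp p$, it runs an explicit case analysis (first locating indices $i_0\neq k_0$ with $p_{i_0},q_{k_0}\geq \frac{1}{2\sqrt{N-1}}$, then splitting according to the sign of $p_{k_0}q_{i_0}$, etc.) to exhibit a concrete pair $\alpha\neq\beta$ with $\sabs{p_\alpha q_\beta-p_\beta q_\alpha}$ bounded below by an explicit constant of the form $c_0 c_1/N$. You instead prove the qualitative spanning statement $p^{\perp}\subset\operatorname{span}\{e^{ij}p\}$ via the clean identity $v=(vp^{T}-pv^{T})p=\sum_{k<l}(v^{k}p^{l}-v^{l}p^{k})\,e^{kl}p$, conclude that $F(p,\cdot)$ is a norm for each fixed $p$, and extract the uniform constant by continuity and compactness of $S^{N-1}\times S^{N-1}$. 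Interestingly, the paper itself remarks at the end of its argument that the content of the first claim is exactly that $\{p_\omega\}$ generates $p^{\perp}$ — you have made that remark the proof rather than the afterthought. What the paper's approach buys is an explicit, in-principle computable constant $c_N$; what yours buys is brevity and transparency, at the cost of non-constructiveness (compactness gives no value for $c$). Your handling of the second claim is the same pigeonhole argument as the paper's, and is in fact slightly more careful: you take $\theta=c/(\abs{\Omega}+1)$, whereas the paper writes $\theta=1/(\abs{\Omega}+1)$, which only works after renormalizing by the constant from the first claim. One cosmetic point: the statement's definition of $\Omega$ contains the typo $\omega_{ij}=-\omega_{ij}$ (it should read $\omega_{ij}=-\omega_{ji}$); you correctly interpreted it as antisymmetry, and your observation that the elementary antisymmetric matrices lie in $\Omega$ is what makes the spanning argument legitimate.
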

For the convenience of the reader, we will give a proof:
\begin{proof}
We prove the first inequality of the claim \eqref{eq:antisym:absqcontrol} for $\abs{q} = 1$ and $q \perp p$. The case $N =1$ is trivial, of course, so assume $N \geq 2$. Let for $\alpha \neq \beta$,
\[
\omega^{\alpha\beta}_{i,j} := \delta_{i}^\alpha \delta_{j}^\beta
-\delta_{j}^\alpha \delta_{i}^\beta \in \Omega,
\]
where $\delta$ denotes the Kronecker symbol
\[
 \delta_\alpha^i = \begin{cases}
		    1 \quad & \mbox{if $\alpha = i$,}\\
                    0 \quad & \mbox{else.} 
                   \end{cases}
\]
The claim \eqref{eq:antisym:absqcontrol} then follows, if we can show that there exists a uniform constant $c > 0$ such that for any $p \equiv (p_i)_{i=1}^N$, $q \equiv (q_i)_{i=1}^N \in \R^N$ with $\abs{p} = \abs{q} = 1$, and $p \perp q \in \R^N$, there are
$\alpha \neq \beta$ such that
\[
  \abs{  \langle p_{\omega^{\alpha\beta}},q\rangle_{\R^N}} \equiv \sabs{ p_\alpha q_\beta -p_\beta q_\alpha} \geq c.
\]
Choose $i_0 \neq k_0$ such that (w.l.o.g) $p_{i_0}, q_{k_0} > 0$ and, more importantly,
\begin{equation}\label{eq:antis:li0mk0geqc0}
 p_{i_0}, q_{k_0} \geq \frac{1}{2\sqrt{N-1}} =: c_0.
\end{equation}
In fact, by orthonormality of $p \perp q$, it cannot happen, that for some $i$, both,
$\abs{p_i}^2, \abs{q_i}^2 \geq \frac{3}{4}$. Thus, if there is some $i_0$ such that $\abs{p_{i_0}}^2 \geq \frac{3}{4}$, then $q_{i_0}^2 \leq \frac{3}{4}$ and because of $\abs{q} = 1$ there has to exist at least one $k_0 \neq i_0$ such that $q_{k_0}$ satisfies \eqref{eq:antis:li0mk0geqc0}. An analogous argument holds, if $\abs{q_{k_0}}^2 \geq \frac{3}{4}$ for some $k_0$. In the remaining cases, we can assume that $\abs{p_i}^2, \abs{q_{k}}^2 < \frac{3}{4}$ for all $i,k$. But then there have to be at least two indices $i_0 \neq i_1$ such that $p_i$ satisfies \eqref{eq:antis:li0mk0geqc0} for $i= i_1, i_1$: because if this was not the case, we had the following contradiction:
\[
 1 = \sum_{i=1}^N \abs{p_i}^2 < \frac{3}{4} + (N-1)\frac{1}{4(N-1)} = 1.
\]
We thus have two different choices for both for $i_0$ in order for $p_{i_0}$ to satisfy \eqref{eq:antis:li0mk0geqc0}, so whichever the choice of $k_0$ is, we can choose $i_0 \neq k_0$.
Then,
\[
 \langle p_{\omega^{i_0k_0}}, q \rangle_{\R^N} = p_{i_0} q_{k_0} - p_{k_0}
q_{i_0}.
\]
If $p_{k_0} q_{i_0} \leq 0$, this implies,
\[
  \langle p_{\omega^{i_0k_0}}, q \rangle_{\R^N}  \geq \brac{c_0}^2.
\]
Assume on the other hand, that $p_{k_0} q_{i_0} > 0$ and that even
\[
 \langle p_{\omega^{i_0k_0}}, q \rangle  \leq \frac{1}{10} \brac{c_0}^2.
\]
Then,
\[
 \abs{p_{k_0} q_{i_0}} = p_{k_0} q_{i_0} = p_{i_0} q_{k_0}-\langle p_{\omega^{i_0k_0}}, q \rangle_{\R^N}  \geq (c_0)^2 -\fracm{10}(c_0)^2.
\]
Since $\abs{p_{k_0}},\abs{q_{i_0}} \leq 1$, we infer
\[
 \abs{p_{k_0}}, \abs{q_{i_0}} \geq \frac{9}{10}(c_0)^2.
\]
It follows, that

\begin{equation}\label{eq:antisym:c1}
 c_1 := \frac{9}{10}(c_0)^3 \leq \abs{q_{i_0}} \abs{p_{i_0}} + \abs{q_{k_0}}
\abs{p_{k_0}}.
\end{equation}
Because of orthogonality $p \perp q$,
\begin{equation}\label{eq:antisym:lastineq}
 - \brac{q_{i_0} p_{i_0} + q_{k_0} p_{k_0}} =  \sum_{i \neq
i_0,k_0} p_i q_i.
\end{equation}
As the product $p_{k_0} q_{i_0} > 0$ we have to consider the case $I$, where both, 
$p_{k_0}, q_{i_0} > 0$ and the case $II$ where both, $p_{k_0}, q_{i_0}
< 0$. Recall that in both cases $p_{i_0}, q_{k_0} > 0$. As for case $I$, \eqref{eq:antisym:lastineq}, \eqref{eq:antisym:c1} imply
\[
 -c_1 \geq \sum_{\ontop{i \neq i_0,k_0}{p_i q_i < 0}} p_i q_i +
\sum_{\ontop{i \neq i_0,k_0}{p_i q_i > 0}} p_i q_i =
-\sum_{\ontop{i \neq i_0,k_0}{p_i q_i < 0}} \abs{p_i} \abs{q_i}
+ \sum_{\ontop{i \neq i_0,k_0}{p_i q_i > 0}} \abs{p_i} \abs{q_i}
\geq -\sum_{\ontop{i \neq i_0,k_0}{p_i q_i < 0}} \abs{p_i}
\abs{q_i}.
\]
In particular, this implies, that there is $i_1 \neq i_0,k_0$ such that
$p_{i_1} q_{i_1} < 0$ and still (recall $\abs{p_{i_1}}, \abs{q_{i_1}} \leq 1$)
\[
 \abs{p_{i_1}}, \abs{q_{i_1}} \geq \frac{c_1}{N}.
\]
Assuming $p_{i_1} > 0$, $q_{i_1} < 0$, since we know $q_{i_0} > 0$, $p_{i_0} > 0$
\[
 \langle p_{\omega^{i_1 i_0}} , q \rangle_{\R^N} = p_{i_1} q_{i_0} - p_{i_0}
q_{i_1} > \abs{p_{i_0}}\abs{q_{i_1}} \geq c_0\ \frac{c_1}{N}.
\]
If we assume, on the other hand, $p_{i_1} > 0$, $q_{i_1} < 0$,
\[
 -\langle p_{\omega^{i_1 i_0}} , q \rangle_{\R^N} = - p_{i_1} q_{i_0} + p_{i_0}
q_{i_1} < -\abs{p_{i_0}}\abs{q_{i_1}} \leq -c_0\ \frac{c_1}{N}.
\]
In the case $II$, where both $p_{k_0}, q_{i_0} < 0$, we have instead
\[
  c_1 \leq -\sum_{\ontop{i \neq i_0,k_0}{p_i q_i < 0}} \abs{p_i}
\abs{q_i} + \sum_{\ontop{i \neq i_0,k_0}{p_i q_i > 0}} \abs{p_i}
\abs{q_i} \leq \sum_{\ontop{i \neq i_0,k_0}{p_i q_i > 0}}
\abs{p_i} \abs{q_i},
\]
and we can find $i_2 \neq i_0,k_0$ such that $p_{i_2} q_{i_2} > 0$ and
\[
 \abs{p_{i_2}}, \abs{q_{i_2}} \geq \frac{c_1}{N}.
\]
Then,
\[
-\operatorname{sign} (p_{i_2})\ \langle p_{\omega^{{i_2 i_0}}} , q \rangle_{\R^N} =
\abs{p_{i_2}} \abs{q_{i_0}} + p_{i_0} \abs{q_{i_2}} \geq c_0 \frac{c_1}{N}.
\]
This proves the first claim, which essentially is contained in the following: For $\abs{p} = 1$, the finite set $\{p_\omega, \omega \in \Omega\}$ may not be a basis; Nevertheless, it is a linear generator of the space $p^\perp \subset \R^N$.

For the second claim we use the following argument: Since $\Omega$ is finite, any vector $q$ of length $\abs{q}=1$ has to have length at least $\theta = \frac{1}{\abs{\Omega}+1}$ in at least one of the linear spaces generated by some $p_\omega$ or generated by $p$ itself. That is $\abs{\langle p_\omega,q\rangle} \geq \theta$ for some $\omega \in \Omega$, or $\abs{\langle p,q\rangle } \geq \theta$. Consequently, if there is some $\omega$ such that $\abs{\langle p_\omega,q\rangle }  < \theta$, then there has to be another $\tilde{\omega} \in \Omega$ such that
$\abs{\langle p_{\tilde{\omega}},q\rangle } \geq \theta$ or, alternatively, $\abs{\langle p,q\rangle }  \geq \theta$.

\end{proof}

\newpage
\bibliographystyle{alpha}%
\bibliography{bib}%
\end{document}